
\documentclass[a4paper,11pt,twoside,notitlepage,leqno]{article}
\usepackage{amsmath,amsthm,amsfonts,amscd,amssymb,calrsfs}
\usepackage[nottoc]{tocbibind}         
\input{xy}\xyoption{all}
\usepackage{hyperref}                  

\makeindex




\setcounter{secnumdepth}{2}
\setcounter{tocdepth}{2}




\renewcommand{\ss}{\mathrm{ss}}


\newcommand{\bbN}{\mathbb{N}}

\newcommand{\bbQ}{\mathbb{Q}}

\newcommand{\bbu}{\boldsymbol{1}}

\newcommand{\mcA}{\mathcal{A}}

\newcommand{\mcB}{\mathcal{B}}

\newcommand{\mcC}{\mathcal{C}}

\newcommand{\mcS}{\mathcal{S}}
\newcommand{\mcT}{\mathcal{T}}

\DeclareMathOperator{\slg}{slg}

\newcommand{\bX}{\boldsymbol{X}}
\newcommand{\bY}{\boldsymbol{Y}}

\newcommand{\bZ}{\boldsymbol{Z}}

\newcommand{\Xss}{X^\mathrm{ss}}
\newcommand{\Yss}{Y^\mathrm{ss}}


\DeclareMathOperator{\id}{id}
\DeclareMathOperator{\coker}{coker}
\DeclareMathOperator{\End}{End}

\DeclareMathOperator{\modd}{mod}

\DeclareMathOperator{\Spec}{Spec}
\DeclareMathOperator{\Hom}{Hom}

\DeclareMathOperator{\iHom}{\mathbf{Hom}}
\DeclareMathOperator{\im}{im}

\DeclareMathOperator{\rad}{rad}
\DeclareMathOperator{\soc}{soc}

\DeclareMathOperator{\rk}{rk}
\DeclareMathOperator{\Gal}{Gal}

\DeclareMathOperator{\GL}{GL}

\DeclareMathOperator{\Aut}{Aut}

\DeclareMathOperator{\ind}{ind}

\DeclareMathOperator{\res}{res}

\DeclareMathOperator{\ev}{ev}

\DeclareMathOperator{\Stab}{Stab}


\newcommand{\isom}{\cong}
\newcommand{\llkurv}{(\!(}
\newcommand{\rrkurv}{)\!)}



\newcommand{\To}{\longrightarrow}


\newcommand{\Ksep}{K^{\mathrm{sep}}}

\newcommand{\Falg}{\overline{F}}








\newcommand{\bT}{\boldsymbol{T}}








\DeclareMathOperator{\Rep}{Rep}
\DeclareMathOperator{\Vect}{Vec}

\newcommand{\modE}{\text{$\modd$-$E$}}
\newcommand{\modEp}{\text{$\modd$-$E'$}}
\newcommand{\modEFp}{\text{$\modd$-$(E\otimes_FF')$}}


\swapnumbers                      
\numberwithin{equation}{subsection}

\theoremstyle{plain} 
\newtheorem{cor}[equation]{Corollary}
\newtheorem{lem}[equation]{Lemma}

\newtheorem{prop}[equation]{Proposition}
\newtheorem{thm}[equation]{Theorem} 

\theoremstyle{definition} 
\newtheorem{dfn}[equation]{Definition}

\newtheorem{exs}[equation]{Examples}



\theoremstyle{remark} 
\newtheorem{rem}[equation]{Remark}

\renewcommand{\emptyset}{\varnothing}

\mathchardef\ordinarycolon\mathcode`\:
\mathcode`\:=\string"8000
\begingroup \catcode`\:=\active
  \gdef:{\mathrel{\mathop\ordinarycolon}}
\endgroup

\newbox\mybox

\def\arrover#1{\mathrel{
\setbox\mybox=\hbox spread 1.4em{\hfil$\scriptstyle#1$\hfil}
\vbox{\offinterlineskip\copy\mybox \hbox
to\wd\mybox{\rightarrowfill}}}}

\def\larrover#1{\mathrel{
\setbox\mybox=\hbox spread 1.4em{\hfil$\scriptstyle#1$\hfil}
\vbox{\offinterlineskip\copy\mybox \hbox
to\wd\mybox{\leftarrowfill}}}}

\def\ontoover#1{\mathrel{
\setbox\mybox=\hbox spread 1.4em{\hfil$\scriptstyle#1$\hfil}
\vbox{\offinterlineskip\copy\mybox \hbox
to\wd\mybox{\rightarrowfill\hskip-2.8mm $\rightarrow$}}}}

\def\leftontoover#1{\mathrel{
\setbox\mybox=\hbox spread 1.4em{\hfil$\scriptstyle#1$\hfil}
\vbox{\offinterlineskip\copy\mybox \hbox
to\wd\mybox{$\leftarrow$\hskip-2.8mm \leftarrowfill}}}}

\def\into{\hookrightarrow}


\newdir^{ (}{{}*!/-3pt/\dir^{(}}
\newdir_{ (}{{}*!/-3pt/\dir_{(}}

\pagestyle{headings}
\begin{document}
\author{Nicolas Stalder\footnote{Dept.\ of Mathematics, ETH Zurich, 8092 Zurich, Switzerland, nicolas@math.ethz.ch}}
\title{Scalar Extension of Abelian and Tannakian Categories}
\date{\today}
\maketitle
\begin{abstract}
We introduce and develop the notion of scalar extension for abelian categories. Given a field extension
$F'/F$, to every $F$-linear abelian category $\mcA$ satisfying a suitable finiteness
condition 
we associate an $F'$-linear abelian category $\mcA\otimes_FF'$ and an exact
$F$-linear functor $t:\:\mcA\to\mcA\otimes_FF'$.
This functor is universal among $F$-linear right exact functors with target an $F'$-linear
abelian category.

We discuss various basic properties of this concept, among others compatibilities with
multilinear endofunctors such as tensor products, and the permanence
of favourable properties of the functors and categories involved.
We obtain the notion of scalar extension for Tannakian categories,
which allows us to deduce consequences for the algebraic monodromy groups of
Tannakian categories.
\end{abstract}

\tableofcontents


\section*{Introduction}
\addcontentsline{toc}{section}{Introduction}

This article initiates a study of ``scalar extension''
of abelian categories, in the case where the scalars
are fields. We understand this to mean that to a field extension $F'/F$ 
and an $F$-linear abelian category $\mcA$
we wish to associate an $F'$-linear abelian category $\mcA\otimes_FF'$
and an $F$-linear exact functor \[t:\:\mcA\To\mcA\otimes_FF'\]which is in
a certain sense ``universal'' among certain $F$-linear functors with values
in an $F'$-linear abelian category.

We construct $\mcA\otimes_FF'$ and $t$ in Subsections \ref{ss:thecategory} and \ref{ss:thefunctor},
under the assumption that $\mcA$ is \emph{$F$-finite}: All objects
have finite length, and all endomorphism algebras are finite $F$-dimensional  (Definition \ref{dfn:Ffinite}).
In the case of Tannakian categories, our construction has been used before, we know
of the instances \cite{DeM82} and \cite{Mil92}.

What is original in our approach is to characterise
this construction by finding its universal property, applicable to all abelian categories.
Namely, every \emph{right}-exact
$F$-linear functor $V:\:\mcA\to\mcB$ with target an $F'$-linear abelian category
has a \emph{right}-exact $F'$-linear ``extension'' $V':\:\mcA\otimes_FF'\to\mcB$ which
is unique ``up to unique isomorphism'':
\begin{equation*}
\text{($\star$)}\qquad\qquad\vcenter{\xymatrix{
\mcA \ar[rr]^{t}\ar[dr]_V && \mcA\otimes_FF' \ar@{.>}[dl]^{V'}\\
&\mcB}}\end{equation*}
This is the content of Subsection $1.4$. We refer
to Theorem \ref{thm:univpropscalex} for the precise formulation of
this universal property, where the underlying $2$-categorical nonsense
is formulated in precise, down-to-earth terms.

Examples of this process are plentiful, and show that our abstract notion
of scalar extension coincides with what intuition suggests.
For instance, if $E$ is a finite-dimensional $F$-algebra and $\mcA$ is the category of finite $F$-dimensional $E$-modules,
then $\mcA\otimes_FF'$ is the category of finite $F'$-dimensional $(F'\otimes_FE)$-modules.

We hasten to add that an exact functor on $\mcA$ need \emph{not} extend to an exact
functor on $\mcA\otimes_FF'$. 
However, by categorical
nonsense the category $(\mcA^\mathrm{op}\otimes_FF')^\mathrm{op}$ has the univeral
property that left exact functors do extend. So a
possible direction of further research might be the following question:
Under which conditions do the categories $(\mcA^\mathrm{op})\otimes_FF'$
and $(\mcA\otimes_FF')^\mathrm{op}$ coincide?

Throughout the article, we shall systematically disregard set-theoretic
difficulties. For hints towards a solution of these, and general categorical background,
we refer to \cite{Kas06}.

\subsection*{Motivation and Overview}
The content of this article is not meant to be ``l'art pour l'art''.
My main motivation for its presentation are two applications to Tannakian duality,
which I use in my article \cite{Sta08}.
For the first, recall that to a Tannakian category $\mcT$ over $F$ with fibre functor
$\omega$ over $F'$ there is associated a linear algebraic group $G_\omega(\mcT)$
over $F'$, the \emph{algebraic monodromy group} of $\mcT$ with respect to
$\omega$. It turns out -- Theorem \ref{thm:nonneutralTannaka} --
that the functor $\omega'$ induced by $\omega$ using
the universal property identifies $\mcT\otimes_FF'$ with
the category of finite-dimensional representations of $G_\omega(\mcT)$ over $F'$.
In this way, we obtain as our first application a weak form
of non-neutral Tannakian duality, which uses only the input of neutral Tannakian duality.

To prove this fact, we must first show that $\mcT\otimes_FF'$ carries a tensor
product, and that ``everything is compatible'' with tensor products. For
this, in Subsection \ref{ss:scalextoftenscats} we will consider more generally a multilinear functor
$\mcA^{\times n}\to\mcA$ and study the induced multilinear functor
$(\mcA\otimes_FF')^{\times n}\to\mcA\otimes_FF'$. 
Since in tensor categories with duals right exact functors are automatically
exact -- Lemma \ref{lem:rightexactisexactforrigids} -- the proof of our
first application ensues rather easily.

The second application is a partial answer to the following
question: In diagram ($\star$), 
under which conditions are favourable properties
of $V'$ equivalent to corresponding ``relatively'' favourable
properties of $V$? An example has been given above, the question of
being exact. The two others we focus on are the following:
When is $V'$ fully faithful? And when is the essential image of $V'$
closed under subquotients? Taken together, we ask: When, in
terms of properties of $V$ and the field extension $F'/F$, is
$V'$ an equivalence of categories?

The relative version of being fully faithful is to be \emph{$F'/F$-fully faithful},
a categorical version of the Tate conjecture on homomorphisms
in algebraic geometry, see Definition \ref{dfn:FpFfullyfia}. We prove that $t$
is $F'/F$-fully faithful in Proposition \ref{lem:tisFpFff}.
For tensor categories with duals,
we prove that $V$ is $F'/F$-fully faithful if and only if
$V'$ is fully faithful in Subsection \ref{ss:permofrelff}.

I have not achieved a full clarification of what the relative version
of the essential image being closed under subquotients
is.
As a kludge, in the special case of separable field
extensions and the context of tensor categories
with duals, we study
functors which map semisimple objects to semisimple objects,
we call this property \emph{semisimple on objects}.
If $F'=F$,  this property is equivalent to
the essential image being closed under subquotients
for exact fully faithful
functors by Proposition \ref{prop:ffssisessimsubquotclosed}.
If $F'/F$ is separable,
we prove that $t$ is semisimple on objects in Proposition \ref{prop:FpFsepthentsemsimple},
and that $V$ is semisimple on objects if and only if $V'$ is in
Proposition \ref{prop:permofsemisimpli}.

Our second application is then a partial answer to the
question of ``recognising induced equivalences of categories''.
It is developed in Subsection \ref{ss:inducedequivalences}, and states the following.
Let $F'/F$ be a separable field extension, $\mcT$ a Tannakian category
over $F$, $\mcT'$ a Tannakian category over $F'$ and
$V:\:\mcT\to\mcT'$ an $F$-linear exact tensor functor. If $V$ is $F'/F$-fully faithful
and semisimple on objects, then the induced functor
\[V':\:\mcT\otimes_FF'\To\llkurv V\mcT\rrkurv_{\otimes}\]
is an equivalence of Tannakian categories, where $\llkurv V\mcT\rrkurv_{\otimes}$
denotes the strictly full Tannakian subcategory of $\mcT'$ generated
by the image of $V$.

\subsection*{Motivic Monodromy Groups}

Here is an example of how our second application may be put to use.
Let $\mcA$ be the $\bbQ$-linear abelian subcategory of 
the category of pure Grothendieck
motives -- up to isogeny and numerical equivalence --
generated by abelian varieties over a given number field $K$.
By \cite{Jan92}, it is a Tannakian category. Choose a prime number $\ell$,
and let $\mcB$ denote the category of finite-dimensional continuous representations
of the absolute Galois group $\Gamma:=\Gal(\Ksep/K)$ of $K$ over $\bbQ_{\ell}$.
This is obviously a Tannakian category, and it is known -- Proposition \ref{prop:mondrofcontreps} --
that the algebraic monodromy
group of its strictly full Tannakian subcategory $\llkurv V\rrkurv_{\otimes}$
generated by a given Galois representation $V$ with respect to the forgetful
functor may be identified with Zariski closure of the image of $\Gamma$
in $\GL(V)(\bbQ_{\ell})$. By \cite{Fal83}, the functor $V_\ell$ of rational Tate modules
is indeed $\bbQ_\ell$-fully faithful (Tate's conjecture!) and semisimple
on objects. Since all objects of $\mcA$ are semisimple by Poincar\'e reducibility and \cite{Jan92}, this
latter property means that all rational Tate modules of abelian varieties are semisimple,
and is hence a special case of the Grothendieck-Serre conjecture on
etale cohomology groups of pure motives.
Therefore, our theorem allows to conclude that the algebraic monodromy group of an
abelian variety over a number field -- its ``motivic'' monodromy group -- 
coincides with the Zariski-closure of the image of Galois.
Our application of Theorem \ref{thm:mainthm2} in \cite{Sta08} is an anologue
of this example, with Anderson $A$-motives replacing abelian
varieties, and the main result of \cite{Sta08}, as advertised
in its title, replacing \cite{Fal83}.

\subsection*{Acknowledgments}
This article as well as \cite{Sta08} are developments of my Ph.D. thesis.
It is my pleasure to thank Richard Pink for his guidance during my doctoral
studies.

\section{Abelian Categories}

\subsection{Properties of Algebras, Categories and Functors}\label{ss:propsofcatsandfuncts}
In this subsection, we collect several
algebraic and categorical notions for later reference.
Let $F$ be a field.
Recall that a category is
\emph{$F$-linear} if all $\Hom$-sets are endowed
with $F$-vector space structures in such a
way that composition of homomorphisms is $F$-bilinear.

\begin{dfn}\label{dfn:Ffinite}
Let $\mcA$ be an abelian category.
\begin{enumerate}
  \item $\mcA$ is \emph{finite} if all
  objects are of finite length.
  \item Assume that $\mcA$ is $F$-linear. Then
  $\mcA$ is \emph{$F$-finite} if it is finite
  and the endomorphism algebra of each object is finite $F$-dimensional.
\end{enumerate}
\end{dfn}

\begin{dfn}\label{dfn:FpFfullyfia}
Let $F'/F$ be a field extension.
Consider an $F$-linear category $\mcC$ and
an $F'$-linear category $\mcC'$. An $F$-linear
functor $V:\:\mcC\to\mcC'$ is \emph{$F'/F$-fully faithful}
if the induced homomorphism
\[F'\otimes_F\Hom_{\mcC}(X,Y)\to\Hom_{\mcC'}\big(V(X),V(Y)\big),\quad f'\otimes h\mapsto f'\cdot V(h)\]
is an isomorphism for all objects $X,Y$ of $\mcC$.
\end{dfn}
More loosely speaking, we might say that an $F'/F$-fully faithful
functor is \emph{relatively fully faithful} if the field
extension $F'/F$ is clear from the context.

\begin{dfn}\label{dfn:semisimpleonobjects}
Let $\mcA$ and $\mcB$ be abelian categories.
An exact functor $V:\:\mcA\to\mcB$ is \emph{semisimple on objects}
if it maps all semisimple objects of $\mcA$ to semisimple
objects of $\mcB$.
\end{dfn}

Let us consider a consequence of the juxtaposition
of the two properties ``$F'/F$-fully faithful'' and ``semisimple on objects''
in the special case $F'=F$.

\begin{prop}\label{prop:ffssisessimsubquotclosed}
Let $\mcA$ be a finite $F$-linear abelian category, $\mcB$ an $F$-linear abelian
category, and $V:\:\mcA\To\mcB$ an $F$-linear, exact, fully faithful functor semisimple on objects.
Then the essential image of $V$ is closed under subquotients in $\mcB$.
\end{prop}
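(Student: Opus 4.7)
The plan is to show the essential image of $V$ is closed under subobjects, from which closure under quotients (and hence under subquotients) will follow easily from exactness. I would prove the subobject statement by induction on the length of $X\in\mcA$, with the inductive claim: every subobject $Y\subseteq V(X)$ is isomorphic to $V(X')$ for some $X'\in\mcA$. The length-zero case is vacuous. For the inductive step, I pick a simple subobject $S\subseteq X$, set $\bar X:=X/S$, and apply $V$ to the short exact sequence $0\to S\to X\to\bar X\to 0$ to obtain $0\to V(S)\to V(X)\xrightarrow{\pi}V(\bar X)\to 0$ in $\mcB$.

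Given $Y\subseteq V(X)$, let $Y_2:=\pi(Y)\subseteq V(\bar X)$. Since $\bar X$ has strictly smaller length, the inductive hypothesis yields $Y_2\isom V(T)$ for some $T\in\mcA$. Because $V$ is exact and faithful it reflects monomorphisms, so I may regard $T\hookrightarrow\bar X$ as an actual subobject with $V(T)=Y_2$. If $T\subsneq\bar X$, then $X'':=X\times_{\bar X}T$ is a proper subobject of $X$ (strictly smaller length), and since $V$ is exact it preserves this pullback, giving $V(X'')=V(X)\times_{V(\bar X)}V(T)\supseteq Y$; the inductive hypothesis applied to $X''$ then places $Y$ in the essential image.

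If instead $T=\bar X$, then $Y$ surjects onto $V(\bar X)$, and I turn to $Y\cap V(S)\subseteq V(S)$. Here semisimplicity on objects enters crucially: $V(S)$ is semisimple in $\mcB$, so $Y\cap V(S)$ is a direct summand of $V(S)$, corresponding to an idempotent of $\End_{\mcB}(V(S))=\End_{\mcA}(S)$ (using full faithfulness). Since $S$ is simple, Schur's lemma forces this idempotent to be $0$ or $1$. The value $1$ gives $V(S)\subseteq Y$, which together with the surjection onto $V(\bar X)$ forces $Y=V(X)$. The value $0$ gives $V(X)=Y\oplus V(S)$; the projection $V(X)\to V(X)$ onto $Y$ along $V(S)$ pulls back under full faithfulness to an endomorphism of $X$, which is idempotent by faithfulness, and which splits in the abelian category $\mcA$ as $X=X_1\oplus X_2$. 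Additivity of $V$ then identifies $V(X_1)$ with $Y$.

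For closure under quotients, any quotient of $V(X)$ is $V(X)/Y$ for some $Y\subseteq V(X)$; by what has just been shown $Y\isom V(X')$ via the image of a monomorphism $X'\hookrightarrow X$, and then $V(X)/Y\isom V(X/X')$ by exactness of $V$. The subtle point I expect to require real care is the idempotent-lifting step in the ``value $0$'' argument: one must verify that the splitting of $V(X)$ coming from $Y\oplus V(S)$ really coincides with the splitting obtained from lifting the projection to $\End_{\mcA}(X)$ and applying $V$ to $X=X_1\oplus X_2$. This follows by matching projections via the faithful functor $V$, but the bookkeeping between $\mcA$ and $\mcB$ is where the interplay of full faithfulness, Schur, and idempotent-splitting must be pinned down precisely.
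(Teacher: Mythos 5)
Your proof is correct, and it takes a genuinely different route from the paper's. You induct on the length of the ambient object $X$: pick a simple $S\subseteq X$, push $Y$ forward to $V(X/S)$, and split into the case where the image $Y_2$ is a proper subobject of $V(X/S)$ (handled by passing to the pullback $X''=X\times_{X/S}T$, which has strictly smaller length) and the case $Y_2=V(X/S)$ (handled by your Schur/idempotent argument on $Y\cap V(S)\subseteq V(S)$). The paper instead inducts on the length of the \emph{subobject} $B\subseteq V(A)$: its base case ($\lg B=1$) walks down a composition series of $A$ to find a simple $A_1$ with $B\hookrightarrow V(A_1)$ and then uses semisimplicity of $V(A_1)$ to exhibit $B$ as the image of an endomorphism, while its inductive step ($\lg B>1$) splits $B$ by a short exact sequence and invokes the Snake Lemma. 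Both approaches spend the hypothesis ``semisimple on objects'' at essentially the same place — to turn a subobject of $V(S)$ for $S$ simple into a direct summand, hence into an (endo)morphism that full faithfulness pulls back to $\mcA$ — but your pullback reduction is a clean alternative to the paper's Snake Lemma diagram chase.

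Two small points. First, in the ``value $0$'' case you need not just $Y\cap V(S)=0$ but also $Y+V(S)=V(X)$ to conclude $V(X)=Y\oplus V(S)$; this follows because $Y$ surjects onto $V(X)/V(S)$ and $Y+V(S)\supseteq V(S)$, so it is worth saying explicitly. Second, the worry you flag at the end is unfounded and the bookkeeping is immediate: if $p\in\End_{\mcB}(V(X))$ is the projection with $\im p=Y$ and $\ker p=V(S)$, full faithfulness gives $p=V(q)$, faithfulness gives $q^2=q$, and exactness of $V$ gives $V(\im q)=\im(Vq)=\im p=Y$ directly — there is no need to compare two splittings, since the splitting of $X$ is \emph{defined} by $q$ and its image under $V$ is computed by exactness.
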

\begin{proof}
By symmetry, it is enough to show that the essential image of $V$
is closed under subobjects in $\mcB$. So let $A$ be an object of $\mcA$ and
$B$ an arbitrary subobject of $V(A)$ in $\mcB$. We must show that $B\isom V(A_0)$ for some
object $A_0$ of $\mcA$. We proceed by induction on $\ell:=\lg(B)$,
the length of a composition series of $B$.
The case $\ell=0$ is trivial. If $\ell=1$, but $A$ is not simple, choose a short exact sequence
\[0\To A'\To A\To A''\To 0\]
with nonzero objects $A',A''$ of $\mcA$. Consider the composite
homomorphism $h:\:B\to V(A)\to V(A'')$. There are two possibilities:
\begin{enumerate}
  \item ``$h\neq 0$'': In this case, $B$ is a subobject of $V(A'')$ since $B$ is simple, and $\lg(A'')<\lg(A)$.
  \item ``$h=0$'': In this case, $B$ is a subobject of $V(A')$, and $\lg(A')<\lg(A)$.
\end{enumerate}
Since $\lg(A)<\infty$, by repeating this process with $A'$ or $A''$ instead of $A$, depending on
which case we arrive at, we find a simple object $A_1$ of $\mcA$ such that $B$ is a subobject of $V(A_1)$.
Now $V(A_1)$ is semisimple since $V$ is semisimple on objects, so $B$
is a quotient object of $V(A_1)$. Since $V$ is fully faithful,
the composite homomorphism $g:\:V(A_1)\to B\to V(A_1)$ is of the form $V(f)$ for some homomorphism
$f\in\End(A_1)$. Set $A_0:=\im(f)$. Since $V$ is exact, we see that $B=\im(g)\isom V(A_0)$, as required.

We turn to the case $\ell=\lg(B)>1$. Choose a short exact sequence
\[0\To B'\To B\To B''\To 0\]
with nonzero objects $B',B''$ of $\mcB$. By induction hypothesis, $B'\isom V(A')$
and $B''\isom V(A'')$ for objects $A',A''$ of $\mcA$. Consider the induced commutative
diagram with exact rows
\[\xymatrix{
0 \ar[r] & V(A') \ar[r] \ar@{=}[d] & B \ar[r] \ar@{^(->}[d] & V(A'') \ar[r] \ar@{.>}[d] & 0 \\
0 \ar[r] & V(A') \ar[r]            & V(A) \ar[r]           & V(A/A') \ar[r]             & 0,
}\]
using the fact that $V$ is exact.
The dotted vertical arrow is of the form $g=V(f)$ since $V$ is fully faithful.
Set $A''':=\coker(f)$. Consider the composite homomorphism
\[h:\:V(A)\to V(A/A')\arrover{g}V(A'''),\]
where we again use the fact that $V$ is exact.
The Snake Lemma implies that $B=\ker(h)$. Since $V$ is fully faithful,
$h$ is of the form $V(f')$ for some homomorphism $f':\:A\to A'''$.
Set $A_0:=\ker(f')$. Since $V$ is exact, we see that
$B=\ker(h)\isom V(A_0)$, as required.
\end{proof}

In the situation of Definition \ref{dfn:semisimpleonobjects},
if $F'/F$ is a field extension, $\mcA$ is $F$-linear and
$\mcB$ is $F'$-linear, experience tells us not to expect
an exact functor $V:\:\mcA\to\mcB$ to be semisimple on
objects in the absence of separability assumptions. Hence we recall the
definition of separability for $F$-algebras, which
extends the usual definition of separability for finite
field extensions.

\begin{dfn}\label{dfn:sepfieldext}
A field extension $F'/F$ is \emph{separable} if
the ring $\overline{F}\otimes_FF'$ contains no nilpotent elements,
where $\overline{F}$ denotes an algebraic closure of $F$.
\end{dfn}

\begin{dfn}\label{dfn:sepalgebra}
A finite-dimensional semisimple $F$-algebra $E$ is \emph{separable}
if the center of each simple factor is a separable field
extension of $F$.
\end{dfn}

\begin{prop}\label{prop:separablealgebras}
Let $E$ be a finite-dimensional semisimple $F$-algebra, and
consider a field extension $F'/F$. If either
\begin{enumerate}
  \item $F'/F$ is a separable field extension, or
  \item $E$ is a separable $F$-algebra,
\end{enumerate}
then $F'\otimes_FE$ is a semisimple $F'$-algebra.
\end{prop}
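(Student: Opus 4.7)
The plan is to reduce to the case where $E$ is simple, then further reduce to understanding the scalar extension of a division algebra with a prescribed center, and finally exploit the étale structure of the resulting commutative algebra. The uniformity across the two hypotheses will fall out of the last step.

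First, I would invoke Wedderburn's structure theorem to write $E\isom\prod_iE_i$ with each $E_i$ a simple $F$-algebra. Since finite products of semisimple algebras are semisimple, it suffices to prove the statement under the assumption that $E$ is simple. Then $E\isom M_n(D)$ for a (finite-dimensional) division $F$-algebra $D$ with center $K$ a finite field extension of $F$. Because $F'\otimes_FE\isom M_n(F'\otimes_FD)$ and matrix algebras are semisimple over semisimple algebras, it further reduces to proving that $F'\otimes_FD$ is a semisimple $F'$-algebra.

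Next, I would write $F'\otimes_FD\isom(F'\otimes_FK)\otimes_KD$ and argue that $F'\otimes_FK$ is a finite product of fields $L_j$ each containing $K$. Granting this claim, the algebra decomposes as $F'\otimes_FD\isom\prod_j(L_j\otimes_KD)$. Since $D$ is a central simple algebra over $K$, each scalar extension $L_j\otimes_KD$ remains a central simple algebra over $L_j$ (a standard fact about central simple algebras), hence simple, hence semisimple; a product of simple algebras is semisimple, yielding the conclusion.

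It remains to verify the claim about $F'\otimes_FK$. Being a finite-dimensional commutative $F'$-algebra, it is a product of fields if and only if it is reduced. In case (b), $K/F$ is a separable finite field extension, so $K\isom F[x]/(p(x))$ with $p$ separable; then $p$ remains separable in $F'[x]$, factors into distinct irreducibles, and $F'\otimes_FK\isom F'[x]/(p(x))$ decomposes as a product of fields accordingly. In case (a), Definition \ref{dfn:sepfieldext} tells us $\overline{F}\otimes_FF'$ is reduced; choosing an embedding $K\hookrightarrow\overline{F}$ and using flatness of tensor products over fields, the induced map $F'\otimes_FK\to F'\otimes_F\overline{F}$ is injective, so $F'\otimes_FK$ is reduced as well. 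The main obstacle is really just bookkeeping: making sure that the two hypotheses both feed into the single key conclusion that $F'\otimes_FK$ is étale over $F'$, after which the rest is pure Wedderburn theory.
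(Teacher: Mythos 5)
Your proposal is correct, and it gives a self-contained proof where the paper simply cites Bourbaki (\emph{Alg\`ebre}, Ch.~8, \S7) for both cases without spelling out an argument. Your route — Wedderburn decomposition to reduce to $E$ simple, then to a division algebra $D$ with center $K$, then rewriting $F'\otimes_FD\isom(F'\otimes_FK)\otimes_KD$ and using that scalar extension of a central simple algebra along a field extension stays central simple — is the standard structure-theoretic argument and unifies the two hypotheses cleanly at the single step ``$F'\otimes_FK$ is reduced,'' which you establish correctly in both cases (via the primitive element theorem in case (b), via reducedness of $\overline{F}\otimes_FF'$ and flatness in case (a)). This buys transparency and self-containment relative to the paper's bare citations, at the cost of some length.

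One small terminological slip in your closing remark: what your argument actually establishes, and all it needs, is that $F'\otimes_FK$ is \emph{reduced}, hence a product of field extensions $L_j$ of $F'$ (and of $K$). You call this conclusion ``\'etale over $F'$,'' but in case (a) the factors $L_j$ need not be separable over $F'$: take $F'=F=\bbF_p(t)$ (so $F'/F$ is trivially separable) and $K=F(t^{1/p})$; then $F'\otimes_FK=K$ is reduced but inseparable over $F'$. This does not affect the proof, since the central-simple-algebra step only requires $L_j\otimes_KD$ to be a scalar extension of $D$ along a field, not along a separable field — but the word ``\'etale'' should be replaced by ``reduced'' (equivalently, a finite product of fields).
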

\begin{proof}
(a): \cite[\S7, no. 3, Corollaire 1 to Proposition 3(b)]{Bou58}.

(b): \cite[\S7, no. 5, Proposition 6 and Corollary to Proposition 7]{Bou58}.
\end{proof}

At the end of Subsection \ref{ss:thefunctor},
we will need the property given in
the following definition, which in contrast to
semisimplicity is invariant under field extensions.
For more information, see \cite{Lam99}.

\begin{dfn}
A finite-dimensional $F$-algebra $E$ is \emph{Frobenius}
if there exists an isomorphism $E\isom\Hom_F(E,F)$ of
right $E$-modules.
\end{dfn}

\begin{prop}\label{prop:frobeniusalgebras}
Let $E$ be a finite-dimensional $F$-algebra.
\begin{enumerate}
  \item If $E$ is a semisimple $F$-algebra, then $E$ is Frobenius.
\end{enumerate}
Assume that $E$ is Frobenius.
\begin{enumerate}
\setcounter{enumi}{1}
  \item For every field extension $F'/F$ the $F'$-algebra
  $F'\otimes_FE$ is Frobenius.
  \item We  have $\soc(E)\isom E/\rad(E)$ as right $E$-modules,
  where $\soc(E)$ denotes the maximal semisimple right $E$-submodule
  of $E$, and $\rad(E)$ denotes the maximal semisimple right $E$-module
  quotient of $E$.
\end{enumerate}
\end{prop}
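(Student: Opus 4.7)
My plan is to address all three parts using the reformulation that $E$ is Frobenius iff $E\isom E^\vee$ as right $E$-modules, where $E^\vee:=\Hom_F(E,F)$ carries the right action $(\phi\cdot e)(x):=\phi(ex)$.

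For part (a), I would reduce via Artin--Wedderburn to the case $E=M_n(D)$ of a simple $F$-algebra (Frobenius being closed under finite products is immediate). For such $E$ there is a unique simple right module $V$, and both $E$ and $E^\vee$ are semisimple right $E$-modules of equal finite $F$-dimension; the former is visibly $V^n$, and the latter, being the $F$-dual of $E$ viewed as a \emph{left} $E$-module, is also $V^n$ once one identifies $F$-duals of simple left modules with simple right modules. A counting of multiplicities then gives the required isomorphism. (As an alternative, one may cite \cite{Lam99}.)

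For part (b), the plan is short: tensor the isomorphism $E\isoto E^\vee$ with $F'$ and compose with the canonical isomorphism $F'\otimes_F\Hom_F(E,F)\isom\Hom_{F'}(F'\otimes_FE,F')$, which is valid since $E$ is finite-dimensional over $F$ and is an iso of right $(F'\otimes_FE)$-modules.

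For part (c), I would use $E\isom E^\vee$ to transport the socle and compute $\soc(E^\vee)$ explicitly. The key identity is that for $\phi\in E^\vee$ and $r\in\rad(E)$, $\phi\cdot r=0$ iff $\phi(rx)=0$ for all $x\in E$; taking $x=1$ gives $\phi|_{\rad(E)}=0$, and this condition in turn suffices since $\rad(E)\cdot E\subseteq\rad(E)$. Thus $\soc(E^\vee)=(E/\rad(E))^\vee$, with a right $E$-action factoring through $E/\rad(E)$. Applying (a) to the semisimple algebra $E/\rad(E)$ gives $(E/\rad(E))^\vee\isom E/\rad(E)$ as right $E/\rad(E)$-modules, hence as right $E$-modules, completing the chain. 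The main obstacle is part (a); once it is in hand, (b) and (c) are short exercises in manipulating duals.
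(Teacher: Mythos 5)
Your proof is correct and follows essentially the same route as the paper's: (a) via Artin--Wedderburn reduction to the simple case, (b) via the base-change isomorphism for $F$-duals of finite-dimensional spaces, and (c) via the identification $\soc(E^\vee)\isom(E/\rad(E))^\vee$ followed by an application of (a) to the semisimple quotient $E/\rad(E)$. The one small simplification in the paper's treatment of (a) is the observation that over a simple Artinian $F$-algebra the isomorphism class of a finitely generated right module is determined by its $F$-dimension, so $\dim_F E=\dim_F E^\vee$ already gives the result without your intermediate identification of duals of simple left modules with simple right modules.
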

\begin{proof}
Set $E^\vee:=\Hom_F(E,F)$, considered as a right $E$-module
using the left $E$-module structure of $E$.

(a): By additivity, we may assume that $E$ is a simple $F$-algebra.
Then, up to isomorphism, there exists only one simple
right $E$-module, and so the isomorphism class of a right $E$-module
is determined by its dimension over $F$. Since $\dim_FE=\dim_FE^\vee$,
it follows that $E$ and $E^\vee$ are isomorphic.

(b): By assumption, $E\isom E^\vee$, and hence
$F'\otimes_FE\isom F'\otimes_F\Hom_F(E,F)\isom\Hom_{F'}(F'\otimes_FE,F')$
as claimed.

(c): By duality, $\soc(E^\vee)=\big(E/\rad(E)\big)^\vee$. Since $E$ is
a Frobenius $F$-algebra, we obtain an induced isomorphism $\soc(E)\isom\soc(E^\vee)$.
Now $E/\rad(E)$ is a semisimple $F$-algebra, so item (a) implies that $(E/\rad(E))^\vee\isom E/\rad(E)$
as right $E/\rad(E)$-modules, and thus as right $E$-modules. Taken together,
we obtain a composite isomorphism
\[\soc(E)\isom\soc(E^\vee)=(E/\rad(E))^\vee\isom E/\rad(E)\]
of right $E$-modules, as claimed.
\end{proof}

\subsection{The Category $\mcA\otimes_FF'$}\label{ss:thecategory}

Let $F$ be a field, and consider an $F$-linear abelian category $\mcA$.

\begin{dfn}\label{dfn:indcat}
An \emph{ind-object of $\mcA$} is a filtered direct system $(X_i)_{i\in I}$ of
objects of $\mcA$. A \emph{homomorphism} of two given ind-objects $(X_i)_{i\in I}$ and $(Y_j)_{j\in J}$
of $\mcA$ is an element of $\varprojlim_i\varinjlim_j\Hom_{\mcA}(X_i,Y_j)$. We obtain
the $F$-linear abelian category $\ind\mcA$ of ind-objects of $\mcA$.
\end{dfn}

We have a natural functor $\mcA\to\ind\mcA$, mapping an object $X$ of $\mcA$
to the object $(X_i)_{i\in I_\emptyset}$
given by $I_\emptyset:=\{\emptyset\}$ and $X_\emptyset:=X$.
It is $F$-linear, exact and fully faithful. Abusing notation,
we identify $\mcA$ with its essential image in $\ind\mcA$ under this natural
functor.

Recall that $\mcA$ is finite if all of its objects have finite length.

\begin{lem}\label{lem:indcatgen}
Let $\mcA$ be finite.
\begin{enumerate}
  \item $\mcA$ is closed under subquotients in $\ind\mcA$.
  \item Every object of $\ind\mcA$ is a union of subobjects in $\mcA$.
\end{enumerate}
\end{lem}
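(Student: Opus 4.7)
My plan is to first establish one auxiliary fact that streamlines both parts of the lemma, then derive (b) directly from it, and finally derive (a). The auxiliary fact I have in mind is: \emph{whenever $f:\:X\to Z$ is a morphism in $\ind\mcA$ with $X$ in $\mcA$, its image (computed in $\ind\mcA$) already lies in $\mcA$.} Canonically, $Z=(Z_j)_{j\in J}$ is the filtered colimit $\varinjlim Z_j$ in $\ind\mcA$, and by the very definition of morphisms in $\ind\mcA$ the map $f$ is represented by a morphism $f_{j_0}:\:X\to Z_{j_0}$ in $\mcA$ for some index $j_0$. Filtered colimits in $\ind\mcA$ are exact, so $\ker(f)=\varinjlim_{j\geq j_0}\ker(X\to Z_j)$. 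The subobjects $\ker(X\to Z_j)\subset X$ form an ascending family in $\mcA$, which, because $X$ has finite length, stabilizes at some $K$. Hence $\ker(f)=K\in\mcA$ and consequently $\im(f)\isom X/K\in\mcA$.

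Part (b) is then immediate. Given $Z=(Z_i)_{i\in I}\in\ind\mcA$, write $Z=\varinjlim Z_i$ in $\ind\mcA$ and let $Z_i'$ be the image of the canonical morphism $Z_i\to Z$. By the auxiliary fact each $Z_i'$ lies in $\mcA$, and by exactness of filtered colimits $Z=\bigcup_i Z_i'$.

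For part (a) I would handle quotients and subobjects separately. A quotient $X\twoheadrightarrow Y$ in $\ind\mcA$ with $X\in\mcA$ satisfies $Y=\im(X\to Y)$ and therefore lies in $\mcA$ by the auxiliary fact. For a subobject $Y\hookrightarrow X$ with $X\in\mcA$, I would apply (b) to write $Y=\bigcup_i Y_i$ with $Y_i\in\mcA$. Each composite $Y_i\hookrightarrow Y\hookrightarrow X$ is a morphism in $\ind\mcA$ between objects of $\mcA$; since $\mcA\to\ind\mcA$ is fully faithful and exact it reflects monomorphisms, so each $Y_i$ is a subobject of $X$ in $\mcA$. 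The family $(Y_i)$ is thus filtered in the subobject lattice of $X$ in $\mcA$, which satisfies ACC because $X$ has finite length, forcing the family to have a maximum $Y_{i_0}$. Then $Y=Y_{i_0}\in\mcA$.

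The main technical care is needed in the auxiliary step: one must correctly invoke exactness of filtered colimits in $\ind\mcA$ and compatibility with kernels of maps out of objects of $\mcA$. The finiteness hypothesis enters precisely once -- to guarantee that the ascending chain of kernels stabilizes -- and is indispensable, since without it $\im(f)$ could remain a genuine ind-object lying outside $\mcA$.
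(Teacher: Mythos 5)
The paper does not prove this lemma in the text; it simply cites Deligne (\cite[\S4.1 and Lemme 4.2.1]{Del87}), so there is no in-paper argument to compare yours against. Your self-contained proof is correct. The auxiliary fact is established soundly: a morphism $f:\:X\to Z$ in $\ind\mcA$ with $X\in\mcA$ is by definition represented by some $f_{j_0}:\:X\to Z_{j_0}$; post-composing with the transition maps gives an ascending chain of subobjects $\ker(X\to Z_j)\subset X$ in $\mcA$, which stabilises because $\lg(X)<\infty$, and exactness of filtered colimits in $\ind\mcA$ identifies the stable value with $\ker(f)$, so $\im(f)\isom X/\ker(f)\in\mcA$. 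Part (b) then falls out by taking the images of the structural maps $Z_i\to Z$ and invoking exactness of filtered colimits once more. Your two-pronged treatment of part (a) is also sound: quotients of $X\in\mcA$ are images and hence lie in $\mcA$ by the auxiliary fact; for a subobject $Y\hookrightarrow X$ you apply (b), observe that the inclusion $\mcA\to\ind\mcA$, being exact and faithful, reflects monomorphisms, so the $Y_i$ are genuine subobjects of $X$ in $\mcA$, and the directed family $(Y_i)$ in the finite-length subobject lattice of $X$ has a maximum $Y_{i_0}$, giving $Y=Y_{i_0}\in\mcA$. This is in substance the argument from Deligne's cited reference; what your write-up buys is that the lemma becomes self-contained rather than deferred to the literature, and it isolates the single point (stabilisation of the kernel chain) where the finiteness hypothesis is used.
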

\begin{proof}
\cite[\S4.1 and Lemme 4.2.1]{Del87}.
\end{proof}

Let $F'/F$ be a field extension.

\begin{dfn}
An \emph{$F'$-module} in $\ind\mcA$ is an ind-object of $\mcA$ together with
an $F$-linear ring homomorphism $\phi:\:F'\to\End_{\ind\mcA}(X)$. A \emph{homomorphism}
of $F'$-modules is a homomorphism of ind-objects which commutes with the respective
actions of $F'$. We obtain the $F'$-linear abelian category $(\ind\mcA)_{F'}$ of
$F'$-modules in $\ind\mcA$.
\end{dfn}

Recall that $\mcA$ is $F$-finite if it is finite and the endomorphism
algebra of each object is finite $F$-dimensional.

\begin{dfn}
Let $\mcA$ be $F$-finite. The \emph{scalar extension} of $\mcA$
from $F$ to $F'$ is the full subcategory $\mcA\otimes_FF'$ of $(\ind\mcA)_{F'}$ consisting
of all $F'$-modules of finite length. It is $F'$-linear, abelian and finite.
\end{dfn}

\begin{rem}
In the next section, we will see that $\mcA\otimes_FF'$ is $F'$-finite.
\end{rem}

\begin{exs}\label{ex:scalarextcats}
\begin{enumerate}
  \item If $E$ is a finite-dimensional $F$-algebra and $\mcA$ is the category of finite $F$-dimensional left $E$-modules, then $\mcA\otimes_FF'$
  is the category of finite $F'$-dimensional left $(F'\otimes_FE)$-modules.
  \item If $G$ is an affine group scheme over $F$ and $\mcA$ is the category $\Rep_F(G)$ of finite-dimensional representations
  of $G$ over $F$, then $\mcA\otimes_FF'$ is the category $\Rep_{F'}(G_{F'})$
  of finite-dimensional representations of $G_{F'}:=G\times_{\Spec(F)}\Spec(F')$ over $F'$.
  For a proof, we refer to \cite{Del87}.
\end{enumerate}
\end{exs}

\subsection{The Functor $t:\:\mcA\to\mcA\otimes_FF'$}\label{ss:thefunctor}

Let $F$ be a field, and consider an $F$-finite $F$-linear abelian category $\mcA$.

\begin{dfn}\label{dfn:extscalfield}
Consider an object $X$ of $\ind\mcA$, an $F$-subalgebra $E\subset\End_{\ind\mcA}(X)$, and
a free right $E$-module $M$. The \emph{external tensor product} $M\otimes_EX$ of $M$ with $X$ over $E$
is the object of $\ind\mcA$ representing the functor from $\ind\mcA$ to left $E$-modules given by
$Y\mapsto\Hom_E(M,\Hom_{\ind\mcA}(X,Y))$. In other words, we require a natural
isomorphism
\[\Hom_{\ind\mcA}(M\otimes_EX,Y)\arrover{\isom}\Hom_E\big(M,\Hom_{\ind\mcA}(X,Y)\big).\]
Note that $M\otimes_EX$ exists, it is a direct sum of $\rk_E(M)$ copies of $X$.
\end{dfn}

The external tensor product is an exact $F$-linear functor in its first variable if $X$ and $E$ are fixed, and
in its second variable if $E=F$ and $M$ is fixed.

\begin{rem}\label{rem:up2ofscalext2}
Consider the situation of Definition \ref{dfn:extscalfield}. If $M$ is a free right $E$-module
of \emph{finite} rank, then $M\otimes_EX$ also represents 
the functor $Z\mapsto M\otimes_E\Hom_{\ind\mcA}(X,Z)$ on $\ind\mcA$,
so one has a natural isomorphism
\[M\otimes_E\Hom_{\ind\mcA}(Z,X)\arrover{\isom}\Hom_{\ind\mcA}(Z,M\otimes_EX).\]
\end{rem}

Let $F'/F$ be a field extension. For every object $X$ of $\ind\mcA$, the external tensor product $F'\otimes_FX$
has a natural $F'$-module structure, using the action of $F'$ on itself by multiplication $\mu$.
We obtain an exact $F$-linear functor
\begin{equation}\label{eqn:goingtoindfp}t=t_{F'/F}:\:\ind\mcA\To (\ind\mcA)_{F'},\quad X\mapsto (F'\otimes_FX,\mu\otimes\id).\end{equation}
We also let $t$ denote its restriction to $\mcA$.

\begin{prop}\label{prop:tisleftadjtoforge}
For every object $X$ of $\ind\mcA$ and $\bY=(Y,\psi)$ of $(\ind\mcA)_{F'}$, the restriction
homomorphism
\[\Hom_{(\ind\mcA)_{F'}}\left(t(X),\bY\right)\To\Hom_{\ind\mcA}(X,Y)\]
is an isomorphism. In other words, $t$
is left adjoint to the forgetful functor from $F'$-modules in $\ind\mcA$ to $\ind\mcA$ itself.
\end{prop}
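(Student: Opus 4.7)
The plan is to identify $\Hom$ in $(\ind\mcA)_{F'}$ via the universal property of the external tensor product (Definition \ref{dfn:extscalfield}), and then recognize the $F'$-equivariance condition as exactly the one that forces a morphism to be determined by its value at $1 \in F'$.

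First, since $F'$ is a free right $F$-module (any $F$-basis will do), the object $F' \otimes_F X$ is defined and gives us a natural isomorphism
\[\Hom_{\ind\mcA}(F' \otimes_F X, Y) \isom \Hom_F\bigl(F', \Hom_{\ind\mcA}(X, Y)\bigr).\]
Under this identification, a morphism $f : F' \otimes_F X \to Y$ in $\ind\mcA$ corresponds to the $F$-linear map $\tilde f : F' \to \Hom_{\ind\mcA}(X,Y)$ it induces. The next step is to check that $f$ underlies a morphism $t(X) \to \bY$ in $(\ind\mcA)_{F'}$ (i.e.\ $f$ is $F'$-linear for the actions $\mu \otimes \id$ on $t(X)$ and $\psi$ on $\bY$) if and only if $\tilde f(a) = \psi(a) \circ \tilde f(1)$ for all $a \in F'$. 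One direction is immediate; the other follows because on external tensor products it suffices to test $F'$-linearity after pulling back along the natural maps $X \to F' \otimes_F X$ coming from elements of $F'$, which is precisely what the universal property encodes.

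This last condition says that $\tilde f$ is completely determined by $\tilde f(1) \in \Hom_{\ind\mcA}(X,Y)$, and conversely that every element $g \in \Hom_{\ind\mcA}(X,Y)$ arises in this way via the assignment $a \mapsto \psi(a) \circ g$ (which is $F$-linear because $\psi$ is, and is $F'$-equivariant because $\psi$ is a ring homomorphism). Unwinding the definitions, the restriction homomorphism in the statement sends $\bff \in \Hom_{(\ind\mcA)_{F'}}(t(X), \bY)$ to $\tilde f(1)$, and we have just exhibited its inverse. Hence the restriction map is bijective.

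The main obstacle is bookkeeping rather than a genuine difficulty: one must be careful that the universal property of $M \otimes_E X$ in Definition \ref{dfn:extscalfield} (stated for $E = F$ and arbitrary free $M = F'$) does give the claimed $\Hom$-isomorphism even when $F'$ has infinite $F$-dimension, and that the $F'$-linearity condition really is captured by testing on the images of the ``basis'' copies of $X$ inside $F' \otimes_F X$. Both points are formal consequences of Definition \ref{dfn:extscalfield}.
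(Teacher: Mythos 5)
Your proposal is correct and takes essentially the same approach as the paper: both define the inverse by sending $h\colon X\to Y$ to the unique morphism $F'\otimes_F X\to Y$ corresponding, via the universal property of the external tensor product, to the $F$-linear map $a\mapsto\psi(a)\circ h$. The paper simply asserts ``by construction'' that this morphism is $F'$-equivariant, whereas you spell out that equivariance of $f$ is equivalent to $\tilde f(a)=\psi(a)\circ\tilde f(1)$ for all $a\in F'$ --- a correct and slightly more explicit rendering of the same argument, not a different route.
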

\begin{proof}
We construct an inverse $e$ to the restriction homomorphism.
Given a homomorphism $h:\:X\to Y$, the induced homomorphism
\[F'\to\Hom(X,Y),\quad f'\mapsto\left(X\arrover{h} Y\arrover{\psi(f')}Y\right)\]
corresponds to a unique homomorphism $e(h):\:F'\otimes_FX\to Y$ by the definition
of $F'\otimes_FX$. By construction, $e(h)$ is a homomorphism of $F'$-modules.
\end{proof}

\begin{rem}\label{rem:tXbXcan}
Given an $F'$-module $\bX=(X,\phi)$ in $\ind\mcA$, Proposition \ref{prop:tisleftadjtoforge}
implies that there exists a natural homomorphism $t(X)\to\bX$ corresponding to $\id_X$
via $\phi$. Note that this homomorphism is surjective.
\end{rem}

Recall the notion $F'/F$-fully faithful, introduced in Definition \ref{dfn:FpFfullyfia}.

\begin{prop}\label{lem:tisFpFff}
The functor $t:\:\mcA\to(\ind\mcA)_{F'}$ is $F'/F$-fully faithful.
\end{prop}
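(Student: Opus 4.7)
The plan is to reduce the question to an interchange of $\Hom$ with a filtered direct sum, which is built into the very definition of $\ind\mcA$.

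\textbf{Step 1: Apply the adjunction.} By Proposition \ref{prop:tisleftadjtoforge} applied with $\bY := t(Y) = (F'\otimes_F Y,\, \mu\otimes\id)$, the restriction map gives a natural isomorphism
\[
\Hom_{(\ind\mcA)_{F'}}\bigl(t(X),\, t(Y)\bigr) \isom \Hom_{\ind\mcA}(X,\, F'\otimes_F Y).
\]
Thus it suffices to produce a natural isomorphism $F'\otimes_F\Hom_\mcA(X,Y) \isom \Hom_{\ind\mcA}(X, F'\otimes_F Y)$ and verify that its composition with the above agrees with the map of Definition \ref{dfn:FpFfullyfia}.

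\textbf{Step 2: Resolve $F'\otimes_F Y$ as a filtered system.} Pick an $F$-basis of $F'$; this expresses $F'$ as the filtered colimit, taken in the category of $F$-vector spaces, of its finite-dimensional $F$-subspaces $V\subset F'$. By Definition \ref{dfn:extscalfield} the external tensor product $F'\otimes_F Y$ is, as an ind-object, the filtered direct system of the $V\otimes_F Y$, each of which lies in $\mcA$ (it is a finite direct sum of copies of $Y$).

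\textbf{Step 3: Commute $\Hom(X,-)$ with the colimit.} Since $X$ is viewed as the constant ind-object on the one-element index set $I_\emptyset$, Definition \ref{dfn:indcat} gives
\[
\Hom_{\ind\mcA}\bigl(X,\, F'\otimes_F Y\bigr) \;=\; \varinjlim_{V\subset F'}\, \Hom_\mcA(X,\, V\otimes_F Y),
\]
the colimit running over the finite-dimensional $F$-subspaces of $F'$. For each such $V$, Remark \ref{rem:up2ofscalext2} (applied with $E=F$, $M=V$, $Z=X$) yields a natural isomorphism $\Hom_\mcA(X,\, V\otimes_F Y) \isom V\otimes_F\Hom_\mcA(X,Y)$. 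Passing to the filtered colimit and using that tensor product commutes with colimits in its first argument,
\[
\varinjlim_V\bigl(V\otimes_F\Hom_\mcA(X,Y)\bigr) \;=\; F'\otimes_F\Hom_\mcA(X,Y).
\]

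\textbf{Step 4: Compare with the natural map.} The composite of the isomorphisms from Steps 1--3 is $F$-linear and natural in $X$ and $Y$; a direct unwinding on an elementary tensor $f'\otimes h$ (with $f'\in V$ for some finite-dimensional $V\ni 1$) shows it sends $f'\otimes h$ to $(\mu(f')\otimes\id_Y)\circ t(h) = f'\cdot t(h)$, which is exactly the map of Definition \ref{dfn:FpFfullyfia}.

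The only real subtlety is Step 3: that $\Hom_{\ind\mcA}(X,-)$ commutes with the filtered colimit defining $F'\otimes_F Y$. I expected this to be the hard part, but it turns out to be automatic from the way $\ind\mcA$ is set up in Definition \ref{dfn:indcat} -- no appeal to finite-length compactness of $X$ in $\ind\mcA$ is even needed, since the target of $t$ is measured by the definition of morphisms in $\ind\mcA$. Everything else is adjunction unwinding and Remark \ref{rem:up2ofscalext2}.
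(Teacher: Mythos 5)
Your proof is correct, and it takes a genuinely different (and cleaner) route than the paper's. The paper also opens with the adjunction of Proposition \ref{prop:tisleftadjtoforge} to reduce to showing $F'\otimes_F\Hom_\mcA(X,Y)\to\Hom_{\ind\mcA}(X,F'\otimes_FY)$ is an isomorphism, but then it argues injectivity and surjectivity separately; the surjectivity step invokes the finite length of $X$ to force $\im(h)$ into some $V\otimes_FY$. You bypass this entirely by noting that $X$, as a constant ind-object, is compact in $\ind\mcA$ by the very definition of morphisms there, so $\Hom_{\ind\mcA}(X,-)$ commutes automatically with the filtered system $(V\otimes_FY)_V$, after which Remark \ref{rem:up2ofscalext2} finishes the job. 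Your approach is conceptually tighter and makes the injectivity/surjectivity dichotomy unnecessary.

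Two remarks. First, in Step 2 you assert without proof that $F'\otimes_FY$ is isomorphic to the filtered system $(V\otimes_FY)_V$ as an ind-object. This is true, but deserves a sentence: one checks that $(V\otimes_FY)_V$ represents the functor $Z\mapsto\Hom_F(F',\Hom_{\ind\mcA}(Y,Z))$ of Definition \ref{dfn:extscalfield}, using that $\Hom_F(V,-)$ commutes with filtered colimits for finite-dimensional $V$. This matters because the ``naive'' candidate, a constant ind-object $\bigoplus_{i}Y$ computed inside $\mcA$ (when $\mcA$ has infinite direct sums), does \emph{not} satisfy the universal property when $F'/F$ is infinite, since an infinite product does not commute with filtered colimits. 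Second, your closing observation that finiteness of $\mcA$ is not used is in direct tension with the paper's remark immediately following the Proposition, which offers a purported counterexample with $\mcA$ the category of all $\bbQ$-vector spaces. If you trace that remark's Hom computation through the ind-category definitions, the value $\prod_{j}\bigoplus_{i}\bbQ$ matches $\Hom_\mcA(X,\overline\bbQ)$ with $\overline\bbQ$ a constant ind-object, whereas the relevant quantity $\Hom_{\ind\mcA}(X,F'\otimes_FY)$ with $F'\otimes_FY$ the filtered system evaluates to $\varinjlim_V\prod_j V\isom\bigoplus_i\prod_j\bbQ$, agreeing with $F'\otimes_F\Hom(X,Y)$. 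Since your argument and the remark cannot both be right, you should explicitly flag the discrepancy rather than silently dropping the hypothesis.
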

\begin{proof}
We must show that for all objects $X,Y$ of $\mcA$
the natural homomorphism
\[F'\otimes_F\Hom_{\ind\mcA}(X,Y)\To\Hom_{(\ind\mcA)_{F'}}\big(t(X),t(Y)\big)\]
is an isomorphism. By Proposition \ref{prop:tisleftadjtoforge}, the target of this homomorphism
coincides with $\Hom_{\ind\mcA}(X,F'\otimes_FY)$, so we must show that
the natural homomorphism
\[F'\otimes_F\Hom_{\ind\mcA}(X,Y)\To\Hom_{\ind\mcA}(X,F'\otimes_FY)\]
is an isomorphism.

\emph{Injectivity:} Given a non-zero element $h'$ of $F'\otimes_F\Hom(X,Y)$, there
exists a finite $F$-dimensional subspace $V\subset F'$ such that $h'$
arises from an element $\widetilde{h'}$ of $V\otimes_F\Hom(X,Y)$. By Remark \ref{rem:up2ofscalext2},
we have a natural isomorphism $V\otimes_F\Hom(X,Y)\isom\Hom(X,V\otimes_FY)$. Now the
commutative diagram (disregard $h$ and $\widetilde{h}$ for the moment)
\begin{equation}\label{eqn:somecommuttdiagg}\vcenter{\xymatrix{\widetilde{h'}\in V\otimes_F\Hom(X,Y) \ar[r]^\isom \ar@{^(->}[d] & \Hom(X,V\otimes_FY)\ni\widetilde{h} \ar@{^(->}[d] \\
            h'\in F'\otimes_F\Hom(X,Y) \ar[r]       & \Hom(X,F'\otimes_FY)\ni h
}}\end{equation}
implies that $h'$ is mapped to a non-zero element $h$ of $\Hom(X,F'\otimes_FY)$.

\emph{Surjectivity:} Consider an element $h$ of $\Hom(X,F'\otimes_FY)$. Since
$\mcA$ is finite the object $X$ has finite length, so the image $\im(h)$ of $h$ is of finite length as well.
The object $F'\otimes_FY$
is the union over all finite $F$-dimensional subspaces $W\subset F'$
of its subobjects $W\otimes_FY$. It follows that $\im(h)\subset V\otimes_FY$ for
some finite $F$-dimensional vector subspace $V\subset F'$.

Therefore, $h$ arises from an element $\widetilde{h}$ of $\Hom(X,V\otimes_FY)$. Now the commutative
diagram (\ref{eqn:somecommuttdiagg}) shows that $h$ is the image of an element $h'$
of $F'\otimes_F\Hom(X,Y)$.
\end{proof}

\begin{rem}
If $\mcA$ is not finite, then $t$ need not be $F'/F$-fully faithful.
Here is a counter-example: Set $F:=\bbQ$ and let $\mcA$ be the category
of all $\bbQ$-vector spaces. Consider $X:=\bigoplus_{j\in\bbN}\bbQ$, $Y:=\bbQ$
and $F':=\overline{\bbQ}$, an algebraic closure of of $\bbQ$.
As $\bbQ$-vector space $F'$ is isomorphic to $\bigoplus_{i\in\bbN}\bbQ$.
Then the homomorphism
\[F'\otimes_F\Hom(X,Y)\to\Hom(X,F'\otimes_FY)\]
is not surjective.
Indeed, we have $F'\otimes_F\Hom(X,Y)\isom\bigoplus_{i\in\bbN}\prod_{j\in\bbN}\bbQ$,
whereas $\Hom(X,F'\otimes_FY)\isom\prod_{j\in\bbN}\bigoplus_{i\in\bbN}\bbQ$.
The latter strictly contains the former.
\end{rem}

Next, we wish to show that the image of $\mcA$ under $t$ lies in $\mcA\otimes_FF'$.
For this, we study how simple objects of $\mcA$ ``split up'' under $t$.

\begin{dfn}
Let $S$ be a simple object of $\ind\mcA$. An object of $\ind\mcA$ is \emph{$S$-isotypic}
if it is a direct sum of copies of $S$.
\end{dfn}

\begin{lem}\label{lem:EmodXiso}
Let $S$ be a simple object of $\ind\mcA$ and set $E:=\End_{\ind\mcA}(S)$. The functor $(-)\otimes_ES$
is an equivalence of $F$-linear abelian categories between the category of (free) right $E$-modules and the
full subcategory of $S$-isotypic objects of $\ind\mcA$.
\end{lem}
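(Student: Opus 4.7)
The plan is as follows. By Schur's lemma applied to the simple object $S$, the endomorphism algebra $E=\End_{\ind\mcA}(S)$ is a division ring, so every right $E$-module is automatically free and the parenthetical ``(free)'' in the statement is vacuous. The functor $M\mapsto M\otimes_E S$ takes values in the $S$-isotypic subcategory by the very construction of $M\otimes_ES$ as a direct sum of $\rk_E(M)$ copies of $S$ (Definition \ref{dfn:extscalfield}). Conversely, essential surjectivity is immediate: by definition every $S$-isotypic object has the form $\bigoplus_{i\in I}S$ for some index set $I$, and this is the value of $E^{(I)}\otimes_E S$ at the free right $E$-module $E^{(I)}$.

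The substantial content is full faithfulness. Directly unwinding the universal property of the external tensor product gives, for any free right $E$-modules $M$ and $N$, a natural identification
\[\Hom_{\ind\mcA}\bigl(M\otimes_E S,\, N\otimes_E S\bigr)\;\isom\;\Hom_E\bigl(M,\,\Hom_{\ind\mcA}(S,\,N\otimes_E S)\bigr).\]
Hence it is enough to produce a natural $E$-linear isomorphism $\Hom_{\ind\mcA}(S,\,N\otimes_E S)\isom N$. Writing $N\isom E^{(J)}$ so that $N\otimes_E S\isom\bigoplus_{j\in J}S$, this reduces to the natural isomorphism
\[\Hom_{\ind\mcA}\Bigl(S,\,\bigoplus_{j\in J} S\Bigr)\;\isom\;\bigoplus_{j\in J}E,\]
given by composing with the canonical projections; injectivity here is clear, and surjectivity is equivalent to the assertion that every morphism from $S$ into $\bigoplus_J S$ factors through a finite subsum.

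This last assertion is the crux of the argument, and follows from the fact that $S$ is a compact (finitely presentable) object of $\ind\mcA$. To justify this I would first note that $S$ lies in the essential image of $\mcA$: by Lemma \ref{lem:indcatgen}(b), $S$ is the union of its subobjects belonging to $\mcA$, and simplicity forces $S$ to equal one such subobject. Compactness of objects of $\mcA$ inside $\ind\mcA$ is then the standard fact about ind-categories, built directly into the definition of morphisms as $\varprojlim_i\varinjlim_j\Hom_{\mcA}(X_i,Y_j)$ in Definition \ref{dfn:indcat}, which yields $\Hom_{\ind\mcA}(X,\varinjlim_\alpha Y_\alpha)=\varinjlim_\alpha\Hom_{\ind\mcA}(X,Y_\alpha)$ for $X\in\mcA$ and any filtered system $(Y_\alpha)$. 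Applying this with the filtered presentation $\bigoplus_J S=\varinjlim_{J'\subset J\text{ finite}}\bigoplus_{J'}S$ yields the desired isomorphism.

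I expect the main obstacle to be bookkeeping rather than substance: one must track carefully the left versus right $E$-module structures so that the chain of identifications in the universal property really produces $\Hom_E(M,N)$ on the right-hand side, and verify that the isomorphism $\Hom_{\ind\mcA}(S,\bigoplus_J S)\isom\bigoplus_JE$ is $E$-linear in the sense demanded by the previous step. $F$-linearity of the equivalence is automatic from the $F$-linearity of all constructions involved.
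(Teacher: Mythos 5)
Your proposal is correct and is, in effect, a fully worked-out version of the paper's one-line proof, which simply names $\Hom_{\ind\mcA}(S,-)$ as a quasi-inverse. The three supporting facts you isolate --- Schur's lemma making $E$ a division ring, compactness of $S$ (which you correctly reduce via Lemma \ref{lem:indcatgen}(b) and simplicity to the fact that $S$ lies in $\mcA$), and the defining universal property of $(-)\otimes_E S$ --- are precisely what one needs to verify that $\Hom_{\ind\mcA}(S,-)$ and $(-)\otimes_E S$ are mutually quasi-inverse, so the underlying route is the same.
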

\begin{proof}
A quasi-inverse functor is given by $\Hom_{\ind\mcA}(S,-)$.
\end{proof}

We turn to a construction. Let $S$ be a simple object of $\mcA$ and
set $E:=\End(S)$. Since $S$ is simple, $E$ is a skew field and all
right $E$-modules are free.
Let $\modE$ denote the $F$-finite $F$-linear abelian category of finite-dimensional $E$-modules,
and $\llkurv S\rrkurv$ the $F$-finite $F$-linear abelian category of $S$-isotypic
objects in $\mcA$. Restricting the statement of Lemma \ref{prop:ridealsubob}
to objects of finite length, we obtain an equivalence of $F$-finite $F$-linear
abelian categories $(-)\otimes_ES:\:\modE\arrover{\isom}\llkurv S\rrkurv$.
Let $F'/F$ be a field extension. We obtain an induced equivalence of finite $F'$-linear abelian categories
\[(\modE)\otimes_FF'\arrover{\isom}\llkurv S\rrkurv\otimes_FF'.\]
As in Example \ref{ex:scalarextcats}(a), we have $\modEFp\isom (\modE)\otimes_FF'$.
On the other hand, the inclusion $\llkurv S\rrkurv\subset\mcA$ induces
a natural fully faithful $F$-linear exact functor $\llkurv S\rrkurv\otimes_FF'\subset \mcA\otimes_FF'$.
Setting $E':=E\otimes_FF'$ we obtain a fully faithful $F'$-linear exact functor
\[\modEp\isom (\modE)\otimes_FF'\arrover{\isom}\llkurv S\rrkurv\otimes_FF'\subset \mcA\otimes_FF',\]
which we denote as $(-)\otimes_{E'}t(S)$.

\begin{prop}\label{prop:ridealsubob}
Let $S$ be a simple object of $\mcA$, set $E:=\End_{\mcA}(S)$
and $E':=F'\otimes_FE$. The functor $(-)\otimes_{E'}t(S)$
gives rise to an inclusion preserving bijection
from the set of right ideals of $E'$ to the set
of subobjects of $t(S)$ in $(\ind\mcA)_{F'}$.
\end{prop}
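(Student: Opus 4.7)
The plan is to reduce the claim to the evident bijection between right ideals of $E'$ and right $E'$-submodules of $E'$, by transporting everything through the equivalence of Lemma \ref{lem:EmodXiso}. First, $t(S) = F' \otimes_F S$ is $S$-isotypic in $\ind\mcA$, being a direct sum of copies of $S$ indexed by an $F$-basis of $F'$. Any subobject of $t(S)$ in $\ind\mcA$ is a union of subobjects in $\mcA$ by Lemma \ref{lem:indcatgen}, and each of these sits inside some finite partial direct sum of copies of $S$ and is therefore $S$-isotypic. Thus every subobject of $t(S)$ in $\ind\mcA$ lies in the essential image of the equivalence of Lemma \ref{lem:EmodXiso}, so that $\Hom_{\ind\mcA}(S,-)$ identifies subobjects of $t(S)$ in $\ind\mcA$ with right $E$-submodules of $\Hom_{\ind\mcA}(S,t(S))$. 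Applying Remark \ref{rem:up2ofscalext2} to each finite-dimensional subspace $V \subset F'$ and passing to the filtered colimit,
\[\Hom_{\ind\mcA}(S,t(S)) = \varinjlim_V V \otimes_F E = F' \otimes_F E = E',\]
as right $E$-modules.

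Next, I track the $F'$-action. The $F'$-action on $t(S)$ is left multiplication on the first factor of $F' \otimes_F S$, which, via post-composition, translates into left multiplication by $F'$ on $E' = F' \otimes_F E$ under the identification above. A subobject of $t(S)$ in $(\ind\mcA)_{F'}$ is precisely a subobject in $\ind\mcA$ stable under this $F'$-action, so the bijection above restricts to one between such subobjects and right $E$-submodules of $E'$ stable under left multiplication by $F'$. Since $F'$ is commutative, left multiplication by $F' \otimes 1 \subset E'$ agrees with right multiplication, so these submodules are precisely the right ideals of $E'$. As right ideals of the finite-dimensional $F$-algebra $E'$ are automatically finite-dimensional, the corresponding subobjects have finite length and lie in $\mcA \otimes_F F'$.

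Finally, I verify that this bijection is the one induced by $(-) \otimes_{E'} t(S)$. For a right ideal $I \subset E'$ one computes
\[I \otimes_{E'} t(S) = I \otimes_{E'} (E' \otimes_E S) = I \otimes_E S,\]
which is the image of $I$ under the equivalence $(-) \otimes_E S$ of Lemma \ref{lem:EmodXiso}; the inclusion $I \hookrightarrow E'$ induces the corresponding subobject inclusion $I \otimes_E S \hookrightarrow t(S)$. Hence the bijection set up in the previous step is exactly the one induced by $(-) \otimes_{E'} t(S)$. The principal obstacle is the middle step: correctly matching the $F'$-action on $t(S)$ with left multiplication on $E'$ under the equivalence, so that the $E$-submodules of $E'$ stable under the $F'$-action coincide with the right $E'$-submodules of $E'$.
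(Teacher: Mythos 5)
Your proof is correct and follows essentially the same path as the paper's, which organizes the argument as a commutative $3\times 2$ diagram of lattices (right $E$-submodules of $E'$ $\leftrightarrow$ $S$-isotypic subobjects of $S'$; their $F'$-stable restrictions; right ideals of $E'$ $\leftrightarrow$ subobjects of $t(S)$). What you do differently is unpack the equalities the paper treats as immediate: you check that every subobject of $t(S)$ in $\ind\mcA$ is automatically $S$-isotypic (via Lemma~\ref{lem:indcatgen} and finite length inside finite partial sums), you compute $\Hom_{\ind\mcA}(S,t(S))\cong E'$ explicitly from Remark~\ref{rem:up2ofscalext2} by passing to the colimit over finite-dimensional subspaces of $F'$, and you spell out that an $E$-submodule of $E'$ stable under left multiplication by $F'$ is the same as a right ideal because $F'\otimes 1$ is central in $E'$. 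The paper compresses all this into a diagram whose vertical equals signs and naturality claims carry the same content; your version is longer but more self-contained and makes the identifications that the paper leaves implicit fully verifiable.
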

\begin{proof}
Recall that $E$ is a skew field and all right $E$-modules are free.
Set $S':=F'\otimes_FS$ and note that $S'$ is an $S$-isotypic object of $\ind\mcA$.

Consider the following diagram of lattices:
\[
\xymatrix{
\left\{\text{\begin{tabular}{c}right $E$-\\submodules of $E'$\end{tabular}}\right\} \ar@{->}^{(-)\otimes_ES}[r] & \left\{\text{\begin{tabular}{c}$S$-isotypic\\subobjects of $S'$\end{tabular}}\right\}\\
\left\{\text{\begin{tabular}{c}$F'$-stable right \\ $E$-submodules of $E'$\end{tabular}}\right\} \ar@{->}[r] & \left\{\text{\begin{tabular}{c}$F'$-stable $S$-isotypic \\ subobjects of $S'$\end{tabular}}\right\} \\
\left\{\text{ right ideals of $E'$ }\right\} \ar@{->}[r]^{(-)\otimes_{E'}t(S)} \ar@{=}[u]  & \left\{\text{ subobjects of $t(S)$ }\right\} \ar@{=}[u]
}
\]
The upper row is a bijection by Lemma \ref{lem:EmodXiso}, and it
preserves inclusions by construction. The second row corresponds to
the $F'$-stable objects in the upper row, using the action of
$F'$ on $E'$ and $S'$, respectively. Since the bijection in the first
row is natural, it induces a bijection in the second row. 
Finally, we may clearly identify the objects of the second row with
the vertically corresponding objects of the third row.
Unraveling the definition of $(-)\otimes_{E'}t(S)$ we see
that the resulting diagram commutes, so we have proven our claim.
\end{proof}

\begin{cor}
The image of $t:\:\mcA\to(\ind\mcA)_{F'}$ is contained in $\mcA\otimes_FF'$
and thus we obtain a well-defined functor $t:\:\mcA\to\mcA\otimes_FF'$.
\end{cor}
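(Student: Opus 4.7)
The goal is to show that for every object $X$ of $\mcA$, the $F'$-module $t(X) = (F'\otimes_FX,\mu\otimes\id)$ is of finite length in $(\ind\mcA)_{F'}$. The plan is to reduce to the case where $X$ is simple by a length-induction, and then to invoke Proposition \ref{prop:ridealsubob} to transfer the finite-length question to a question about right ideals of a finite-dimensional $F'$-algebra.

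First, since $\mcA$ is $F$-finite, every object has finite length, so I would induct on $\lg_{\mcA}(X)$. Given a short exact sequence $0\to X'\to X\to X''\to 0$ in $\mcA$ with $\lg(X'),\lg(X'')<\lg(X)$, exactness of $t:\ind\mcA\to(\ind\mcA)_{F'}$ (see (\ref{eqn:goingtoindfp})) yields a short exact sequence $0\to t(X')\to t(X)\to t(X'')\to 0$ of $F'$-modules in $\ind\mcA$. If both outer terms have finite length, so does $t(X)$. Hence it suffices to treat the case where $X=S$ is a simple object of $\mcA$.

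So let $S$ be simple and set $E:=\End_{\mcA}(S)$ and $E':=F'\otimes_FE$. Because $\mcA$ is $F$-finite, $E$ is finite-dimensional over $F$, hence $E'$ is finite-dimensional over $F'$; in particular $E'$ is an Artinian (and Noetherian) ring, so its length as a right module over itself, i.e.\ the length of the lattice of right ideals of $E'$, is finite. By Proposition \ref{prop:ridealsubob}, the assignment $I\mapsto I\otimes_{E'}t(S)$ is an inclusion-preserving bijection between the right ideals of $E'$ and the subobjects of $t(S)$ in $(\ind\mcA)_{F'}$. Any strictly ascending chain of subobjects of $t(S)$ therefore corresponds to a strictly ascending chain of right ideals in $E'$, and the latter has bounded length. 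It follows that $t(S)$ has finite length in $(\ind\mcA)_{F'}$, completing the induction.

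The only potential obstacle is checking that the length bound really is the correct one: $t(S)$ might a priori admit subobjects in $(\ind\mcA)_{F'}$ which are not $S$-isotypic, but Proposition \ref{prop:ridealsubob} has already settled this by showing that \emph{every} subobject arises from a right ideal of $E'$. Once that bijection is in hand, the rest of the argument is essentially the observation that scalar extension of a finite-dimensional algebra remains finite-dimensional, hence Artinian.
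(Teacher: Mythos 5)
Your argument is correct and follows exactly the same strategy as the paper: reduce to the simple case via exactness of $t$ and finiteness of $\mcA$, then use $F$-finiteness to make $E'=F'\otimes_F E$ Artinian, and conclude via the subobject--right-ideal bijection of Proposition \ref{prop:ridealsubob}. You have simply spelled out the induction and the Artinian observation that the paper leaves implicit.
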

\begin{proof}
We must show that $t(X)$ has finite length as $F'$-module for every object $X$ of $\mcA$. Since $\mcA$ is finite
and $t$ is exact, we may assume that $X=:S$ is simple. Since $\mcA$ is $F$-finite, the endomorphism ring $E:=\End_{\mcA}(S)$ is
finite $F$-dimensional. It follows that $F'\otimes_FE$ has finite length as a right module
over itself, and so $t(S)$ has finite length by Proposition \ref{prop:ridealsubob}.
\end{proof}

Summing up what we have achieved so far, to our $F$-finite abelian category $\mcA$ 
we have associated a finite $F$-linear abelian category $\mcA\otimes_FF'$
and an $F$-linear exact $F'/F$-fully faithful functor $t:\:\mcA\to\mcA\otimes_FF'$.

The remainder of this subsection is devoted to showing that $\mcA\otimes_FF'$ is
not only finite, but indeed $F'$-finite. The most natural approach would be to
show that for every object $\bX$ of $\mcA\otimes_FF'$ there exist objects $X_0$ and $X^0$
of $\mcA$ together with an epimorphism $t(X_0)\to\bX$ and a monomorphism
$\bX\to t(X^0)$, since then by $F'/F$-full faithfulness of $t$ and $F$-finiteness
of $\mcA$ it would follow that
\[\dim_{F'}\End(\bX)\le\dim_{F'}\Hom\big(t(X_0),t(X^0)\big)=\dim_F\Hom(X_0,X^0)<\infty.\]

\begin{lem}\label{lem:presandcopres1}
For every object $\bX$ of $\mcA\otimes_FF'$
there exists an object $X_0$ of $\mcA$ and an epimorphism
$t(X_0)\to\bX$. 
\end{lem}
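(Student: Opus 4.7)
The plan is to combine the natural surjection provided by Remark \ref{rem:tXbXcan} with the ``union'' description of ind-objects in Lemma \ref{lem:indcatgen}(b), and then exploit the finite-length hypothesis on $\bX$ to extract a genuine object of $\mcA$.

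Concretely, write $\bX = (X,\phi)$ for an ind-object $X$ of $\mcA$ endowed with an $F'$-action. By Remark \ref{rem:tXbXcan} there is a canonical epimorphism $\pi\colon t(X)\onto\bX$ in $(\ind\mcA)_{F'}$. The task is therefore to replace $X$ by a suitable subobject lying in $\mcA$ itself. Since $\mcA$ is finite, Lemma \ref{lem:indcatgen}(b) expresses $X$ as the filtered union of its subobjects $X_i\subset X$ lying in $\mcA$.

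Next I would invoke that $t\colon\ind\mcA\to(\ind\mcA)_{F'}$ is a left adjoint by Proposition \ref{prop:tisleftadjtoforge}, hence preserves colimits; in particular the filtered union $X=\bigcup_i X_i$ is sent to the filtered union $t(X)=\bigcup_i t(X_i)$ in $(\ind\mcA)_{F'}$. Composing with $\pi$, let $\bX_i\subset\bX$ be the image of the composite $t(X_i)\to t(X)\onto\bX$; these subobjects form a filtered family whose union $\bigcup_i\bX_i$ coincides with the image of $\pi$, which is all of $\bX$.

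Now the finite-length hypothesis enters: since $\bX$ lies in $\mcA\otimes_FF'$, it has finite length as an $F'$-module in $\ind\mcA$, so the ascending filtered family $(\bX_i)$ stabilises, i.e.\ $\bX_{i_0}=\bX$ for some index $i_0$. Setting $X_0:=X_{i_0}$, the induced morphism $t(X_0)\to\bX$ is then the desired epimorphism with $X_0\in\mcA$.

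The argument is essentially a ``finiteness chases ind-objects back into $\mcA$'' routine; the only conceptual point one must not overlook is that $t$ commutes with the filtered union, which is guaranteed by its left-adjointness in Proposition \ref{prop:tisleftadjtoforge} (or, equivalently and more concretely, by the fact that external tensor product with $F'$ is the coproduct of copies of the identity). No real obstacle is anticipated.
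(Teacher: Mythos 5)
Your proof is correct and follows essentially the same strategy as the paper: write $\bX=(X,\phi)$, express $X$ as the filtered union of its subobjects $X_i\in\mcA$ via Lemma \ref{lem:indcatgen}(b), form the maps $t(X_i)\to\bX$ via Remark \ref{rem:tXbXcan}, and use finite length of $\bX$ to conclude the ascending family of images stabilises. The only cosmetic difference is in how the two arguments see that the images exhaust $\bX$: you invoke that $t$ preserves filtered colimits as a left adjoint so that $t(X)=\bigcup_i t(X_i)$, whereas the paper observes directly that the image of $t(X_i)\to\bX$ already contains $X_i$ (because $t(X)\to\bX$ restricts to the identity on $X\subset t(X)$), so the images cover $X=\bigcup_i X_i$, which is the underlying object of $\bX$; both routes are valid and the overall argument is the same.
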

\begin{proof}
Let $\bX=(X,\phi)$ be an object of $\mcA\otimes_FF'$. By Lemma \ref{lem:indcatgen}(b)
the object $X$ is the union of its subobjects $X_i$ lying in $\mcA$. Consider the
homomorphisms $h_i:\:t(X_i)\subset t(X)\to\bX$ given as in Remark \ref{rem:tXbXcan}.
Since the image of $h_i$ contains $X_i$ and $\bX$
has finite length, there exists an index $i$ such that $t(X_i)\to\bX$ is an epimorphism,
so we may choose $X_0:=X_i$ together with the epimorphism $h_i$.
%
\end{proof}

However, for a general object $\bX$ of $\mcA\otimes_FF'$ it seems difficult to find a monomorphism $\bX\to t(X^0)$
with $X^0$ an object of $\mcA$. Hence we modify the natural approach sketched above,
by first ``reducing to the simple case'' and then using the fact the endomorphism
algebra of a simple object is Frobenius together with
Proposition \ref{prop:ridealsubob} to find a monomorphism as desired in the simple case.

\begin{dfn}
Let $X$ be an object of a finite abelian category.
\begin{enumerate}
  \item The \emph{socle} $\soc(X)$ of $X$
  is the sum of its simple subobjects.
  Note that $X$ is semisimple if and only if $X=\soc(X)$.
  \item The \emph{socle filtration} of $X$
  is the ascending exhaustive filtration defined as follows: We set $\soc^0(X):=0$,
  $\soc^1(X):=\soc(X)$ and for $i\ge1$ recursively \[\soc^{i+1}(X):=\pi_i^{-1}\big(\soc(X/\soc^iX)\big)\]
  where $\pi_i$ is the canonical projection of $X$ onto $X/\soc^i(X)$.
  \item The \emph{semisimplification} $X^\mathrm{ss}:=\bigoplus_{i\ge 1}\soc^i(X)/\soc^{i-1}(X)$
  of $X$ is the object underlying the graded
  object associated to the socle filtration of $X$.
  \item The  \emph{socle length} of $X$ is the smallest integer $\ell$
  such that $\soc^\ell(X)=X$. We denote it by $\slg(X)$.
\end{enumerate}
The assignments (a-c) are functorial in $X$.
\end{dfn}

We may now ``reduce to the simple case''.

\begin{prop}\label{prop:estimateEnd}
Given two objects $X$, $Y$ of a finite $F$-linear abelian category,
we have $\dim_F\Hom(X,Y)\le\dim_F\Hom(\Xss,\Yss)$.
\end{prop}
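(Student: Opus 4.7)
The plan is to construct a decreasing exhaustive filtration on $\Hom(X,Y)$ whose associated graded injects, piece by piece, into summands of $\Hom(X^{\ss},Y^{\ss})$. By functoriality of the socle, every homomorphism $f:X\to Y$ satisfies $f(\soc^j X)\subset \soc^j Y$ for all $j\ge 0$. Set, for $k\ge 0$,
\[ G^k := \{\, f\in\Hom(X,Y) \mid f(\soc^j X)\subset \soc^{j-k}Y \text{ for all } j\,\},\]
with the convention $\soc^i Y=0$ for $i\le 0$. Then $G^0=\Hom(X,Y)$ by functoriality, and $G^k=0$ once $k\ge\slg(X)$, so
\[ \Hom(X,Y)=G^0\supset G^1\supset G^2\supset\cdots\supset G^{\slg(X)}=0.\]

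For each $k\ge 0$ and each $j$, an element $f\in G^k$ induces a homomorphism
\[ f_{j,k}:\soc^j X/\soc^{j-1}X\To \soc^{j-k}Y/\soc^{j-k-1}Y,\]
using that $f(\soc^{j-1}X)\subset\soc^{j-k-1}Y\cdot$Wait — I need this inclusion, which holds because $f\in G^k$ means the containment is with the shift $k$ applied to every $j$, including $j-1$. Assembling these gives an $F$-linear map
\[ \varphi_k: G^k\To \bigoplus_j \Hom\bigl(\soc^j X/\soc^{j-1}X,\;\soc^{j-k}Y/\soc^{j-k-1}Y\bigr),\]
whose kernel is precisely $G^{k+1}$: indeed, $\varphi_k(f)=0$ means $f(\soc^j X)\subset\soc^{j-k-1}Y$ for every $j$. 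Hence $\varphi_k$ induces an injection $G^k/G^{k+1}\hookrightarrow \bigoplus_j \Hom(\soc^j X/\soc^{j-1}X,\soc^{j-k}Y/\soc^{j-k-1}Y)$.

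Summing dimensions over $k\ge 0$ and re-indexing with $i=j-k$ yields
\[ \dim_F\Hom(X,Y)=\sum_{k\ge 0}\dim_F(G^k/G^{k+1})\le \sum_{\substack{i\le j}} \dim_F\Hom\bigl(\soc^j X/\soc^{j-1}X,\soc^i Y/\soc^{i-1}Y\bigr),\]
which is bounded above by the full double sum $\sum_{i,j}\dim_F\Hom(\soc^j X/\soc^{j-1}X,\soc^i Y/\soc^{i-1}Y) = \dim_F\Hom(X^{\ss},Y^{\ss})$, using the definition of $X^{\ss}$ and $Y^{\ss}$ as direct sums of the graded pieces.

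The main obstacle is the identification $\ker \varphi_k = G^{k+1}$, which is what makes the bookkeeping work: one must check carefully that $f_{j,k}=0$ for all $j$ is equivalent to the stronger containment $f(\soc^j X)\subset \soc^{j-k-1}Y$ for all $j$, and this is a direct unwinding of the quotient $\soc^{j-k}Y/\soc^{j-k-1}Y$. Everything else is routine: functoriality of $\soc^j(-)$ gives that $G^0=\Hom(X,Y)$, finiteness of the socle filtration gives exhaustiveness, and the final re-indexing is purely combinatorial.
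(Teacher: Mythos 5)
Your proof is correct, and it takes a genuinely different route from the paper's. The paper proceeds by induction on $\ell = \max\{\slg(X),\slg(Y)\}$: it considers the kernel $K$ of the map $\Hom(X,Y)\to\Hom(\soc X,\soc Y)\oplus\Hom(X/\soc X, Y/\soc Y)$, applies the Snake Lemma to produce an injection $K\hookrightarrow\Hom(X/\soc X,\soc Y)$, and concludes $\dim\Hom(X,Y)\le\dim\Hom(\soc X,\soc Y)+\dim\Hom(X/\soc X,\soc Y)+\dim\Hom(X/\soc X,Y/\soc Y)$ before invoking the induction hypothesis on each summand. Your argument instead constructs in one stroke the "shift" filtration $G^k$ by how far down a homomorphism pushes socle layers, and embeds each graded piece $G^k/G^{k+1}$ into a direct sum of $\Hom$'s between socle graded pieces via $\varphi_k$. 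You thereby avoid both the induction and the Snake Lemma, and the bookkeeping reduces to a single re-indexing $i=j-k$. The trade-off is that your approach requires getting the kernel identification $\ker\varphi_k=G^{k+1}$ exactly right across all $j$ simultaneously, while the paper only ever peels off one socle layer at a time. Both arguments work in any abelian category, since the filtration on $\Hom(X,Y)$ is a filtration of an $F$-vector space and the containments $f(\soc^jX)\subset\soc^{j-k}Y$ are statements about subobjects, not elements; you should nevertheless phrase them as "the composite $\soc^jX\to Y\to Y/\soc^{j-k}Y$ vanishes" rather than using element notation, and the parenthetical "Wait — I need this inclusion" should be replaced by a clean statement that both $f(\soc^jX)\subset\soc^{j-k}Y$ and $f(\soc^{j-1}X)\subset\soc^{j-k-1}Y$ follow from $f\in G^k$.
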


\begin{proof}
We proceed by induction on $\ell:=\max\{\slg(X),\slg(Y)\}$.
If $\ell\le 1$ we have $X=\Xss$ and $Y=\Yss$, so the
statement of this proposition is trivial.

Assume that $\ell\ge 2$. For every $f\in\Hom(X,Y)$ we have $f(\soc X)\subset\soc Y$,
so we obtain a diagram
\begin{equation}\label{eqn:estimatinghomss}\vcenter{\xymatrix{
0\ar[r] & \soc X \ar[r] \ar[d]_{f\mid_{\soc X}} & X \ar[r] \ar[d]^f & X/\soc X \ar[r] \ar[d]^{[f]_{X/\soc X}} & 0\\
0\ar[r] & \soc Y \ar[r] & Y \ar[r] & Y/\soc Y\ar[r] & 0
}}\end{equation}
Let $K$ denote the kernel of the induced homomorphism
\[\begin{array}{ccc}
\Hom(X,Y) & \to & \Hom(\soc X,\soc Y)\oplus\Hom(X/\soc X,Y/\soc Y),\\
 f & \mapsto &(f\mid_{\soc X},[f]_{X/\soc X}).
 \end{array}\]
 For $f\in K$ the Snake Lemma applied to (\ref{eqn:estimatinghomss})
 gives us an exact sequence
 \[0\to\soc X\to\ker f\to X/\soc X\arrover{\delta(f)}\soc Y\to\coker f\to Y/\soc Y\to 0.\]
 Since $\delta(f)$ is natural in $f\in K$ we obtain an $F$-linear homomorphism \[\delta:\:K\To\Hom(X/\soc X,\soc Y).\]
 If $\delta(f)=0$, then $\lg(\ker f)=\lg(\soc X)+\lg(X/\soc X)=\lg(X)$, so $\ker f=X$ and $f=0$.
 This shows that $\delta$ is injective.
 
 Now the definition of $K$ and the injectivity of $\delta$ show that
 the dimension of $\Hom(X,Y)$ is bounded above by
 \[\begin{array}{ccc}
\dim_F\Hom(\soc X,\soc Y) & + &\dim_F\Hom(X/\soc X,\soc Y)\\
 & + & \dim_F\Hom(X/\soc X,Y/\soc Y).
\end{array}\]
By construction, all objects involved have socle length $< \ell$, so by 
induction hypothesis the last displayed quantity is bounded above by
\[\begin{array}{ccc}
\dim_F\Hom(\soc X,\soc Y) & + &\dim_F\Hom\big((X/\soc X)^\mathrm{ss},\soc Y\big)\:\:+\\
\dim_F\Hom\big(\soc X,(Y/\soc Y)^\mathrm{ss}\big) & + & \dim_F\Hom\big((X/\soc X)^\mathrm{ss},(Y/\soc Y)^\mathrm{ss}\big),
\end{array}\]
 where we have added an extra term $\dim_F\Hom\left(\soc X,(Y/\soc Y)^\mathrm{ss}\right)\ge 0$.
 However, this last displayed quantity is precisely the dimension of
 \[\Hom(\Xss,\Yss)=\Hom\big(\soc(X)\oplus(X/\soc X)^\mathrm{ss},\soc(Y)\oplus(Y/\soc Y)^\mathrm{ss}\big),\]
 so we have achieved our goal.
\end{proof}

We may now exploit the fact that simple objects have Frobenius endomorphism algebras.

\begin{prop}\label{prop:presandcopres1}
Let $\bX$ be a simple object of $\mcA\otimes_FF'$.
There exists a simple object $S$ of $\mcA$ together
with both an epimorphism $t(S)\to\bX$ and a monomorphism $\bX\to t(S)$.
\end{prop}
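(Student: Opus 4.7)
The plan is to handle the epimorphism and the monomorphism in succession. For the epimorphism, I would start from Lemma \ref{lem:presandcopres1}, which produces an object $X_0$ of $\mcA$ together with an epimorphism $f\colon t(X_0)\twoheadrightarrow \bX$, and then reduce $X_0$ to a simple object by induction on $\lg(X_0)$. If $X_0$ is not already simple, choose a maximal proper subobject $X_0'\subset X_0$, so that $S':=X_0/X_0'$ is simple, and apply the exact functor $t$ to get the short exact sequence $0\to t(X_0')\to t(X_0)\to t(S')\to 0$. Because $\bX$ is simple, the image of the restriction $f|_{t(X_0')}$ is either $0$ or all of $\bX$: in the first case $f$ factors as an epimorphism $t(S')\twoheadrightarrow\bX$ with $S'$ simple, as required; in the second case we iterate with $X_0'$ in place of $X_0$, strictly decreasing the length.

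For the monomorphism, set $E:=\End_{\mcA}(S)$ and $E':=F'\otimes_FE$. Since $\mcA$ is $F$-finite and $S$ is simple, $E$ is a finite-dimensional $F$-division algebra, in particular a semisimple $F$-algebra; by Proposition \ref{prop:frobeniusalgebras}(a,b) the $F'$-algebra $E'$ is therefore Frobenius, and by part (c) we have $\soc(E')\isom E'/\rad(E')$ as right $E'$-modules. Proposition \ref{prop:ridealsubob} identifies the lattice of subobjects of $t(S)$ in $\mcA\otimes_FF'$ with the lattice of right ideals of $E'$; via the underlying equivalence of categories this extends to an identification of quotients of $t(S)$ with right $E'$-module quotients of $E'$. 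It follows that the isomorphism classes of simple quotients of $t(S)$ coincide with those of simple $E'$-module quotients of $E'$, i.e., with simple submodules of the semisimple module $E'/\rad(E')$, while the isomorphism classes of simple subobjects of $t(S)$ coincide with those of simple submodules of $\soc(E')$. The Frobenius isomorphism $\soc(E')\isom E'/\rad(E')$ matches these two classes of simple right $E'$-modules. Since $\bX$ is a simple quotient of $t(S)$ by the first step, it is therefore also isomorphic to a simple subobject of $t(S)$, yielding the required monomorphism.

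The main obstacle, flagged explicitly in the text preceding the statement, is that for a general object $\bX$ one cannot simply dualise Lemma \ref{lem:presandcopres1} to produce a monomorphism into some $t(X^0)$ with $X^0\in\mcA$. The virtue of first reducing to the case where both $\bX$ and $S$ are simple is that the Frobenius structure on $E'$ then becomes available, and it is precisely the symmetry between $\soc(E')$ and $E'/\rad(E')$ that forces every simple quotient of $t(S)$ to reappear as a simple subobject of $t(S)$.
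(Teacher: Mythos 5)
Your proposal is correct and follows essentially the same route as the paper: reduce the source of the epimorphism from Lemma \ref{lem:presandcopres1} to a simple $S$ by induction on length, then use Proposition \ref{prop:ridealsubob} together with the Frobenius property of $E'=F'\otimes_F\End_\mcA(S)$ and the isomorphism $\soc(E')\isom E'/\rad(E')$ to turn the simple quotient $\bX$ of $t(S)$ into a simple subobject of $t(S)$. The only cosmetic difference is in the epimorphism step, where you peel off a simple quotient of $X_0$ rather than a simple subobject as the paper does; and in the monomorphism step you argue at the level of isomorphism classes under the equivalence $(-)\otimes_{E'}t(S)$, whereas the paper constructs the explicit composite injection $E'/I'\into E'/\rad(E')\isom\soc(E')\subset E'$ and applies the exact functor — both are valid and amount to the same thing.
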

\begin{proof}
By Lemma \ref{lem:presandcopres1} there exists
an object $X_0$ of $\mcA$ and an epimorphism $t(X_0)\to\bX$.
Let us first show that we may assume that $X_0$
is simple. If $X_0$ is not simple, then it has length $\ge2$
and we may choose a simple subobject $Y_0\subset X_0$.
Consider the restriction $t(Y_0)\into t(X_0)\to \bX$. If it is
non-zero, then it is an epimorphism because $\bX$ is simple
and we may choose $S:=Y_0$. If it is zero, we obtain a factor homomorphism $t(X_0/Y_0)=t(X_0)/t(Y_0)\to\bX$
which remains an epimorphism. Since $\mcA$ is finite, the claim
follows by induction on the length of $X_0$.

It remains to show that $\bX$ embeds into $t(S)$. Set $E:=\End(S)$.
Since $t$ is $F'/F$-fully faithful by Proposition \ref{lem:tisFpFff}, we may
identify $E':=F'\otimes_FE$ and $\End(t(S))$.
The kernel of our epimorphism $t(S)\to \bX$ corresponds
to a maximal right ideal $I'$ of $E'$ by Proposition \ref{prop:ridealsubob}.
Since $E'/I'$ is simple, it embeds into $E'/\rad(E')$,
the largest semisimple right $E'$-module quotient of $E'$.
Since $E$ is a skew field, it is Frobenius by Proposition
\ref{prop:frobeniusalgebras}(a). Therefore $E'$ is
also Frobenius by Proposition \ref{prop:frobeniusalgebras}(b),
and so Proposition \ref{prop:frobeniusalgebras}(c) shows
that $E'/\rad(E')\isom\soc(E')$, the largest semisimple right $E'$-submodule
of $E'$. Taken together,
we obtain an injection \[E'/I'\into E'/\rad(E')\isom\soc(E')\subset E'.\]
Applying the exact functor $(-)\otimes_{E'}t(S)$, we obtain
an induced monomorphism
\[\bX=\big(E'\otimes_{E'}t(S)\big)/\big(I'\otimes_{E'}t(S)\big)=(E'/I')\otimes_{E'}t(S)\into E'\otimes_{E'}t(S)=t(S),\]
as desired.
\end{proof}

\begin{thm}\label{thm:scalarextwoutup} \begin{enumerate}
  \item $\mcA\otimes_FF'$ is an $F'$-finite $F'$-linear abelian category.
  \item $t:\:\mcA\to\mcA\otimes_FF'$ is an $F$-linear exact $F'/F$-fully faithful functor.
\end{enumerate} \end{thm}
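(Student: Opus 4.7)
The main task is to establish the $F'$-finiteness of $\mcA \otimes_F F'$, since everything else has essentially been proven in the preceding subsection. The category is already known to be $F'$-linear, abelian, and finite by construction. The functor $t$ is $F$-linear and exact by definition, and its $F'/F$-full faithfulness follows from Proposition \ref{lem:tisFpFff} together with the fact that $\mcA \otimes_F F'$ is by definition a full subcategory of $(\ind\mcA)_{F'}$.

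So the only nontrivial claim is that for every object $\bX$ of $\mcA \otimes_F F'$ the endomorphism algebra $\End(\bX)$ is finite-dimensional over $F'$. The plan is to follow the strategy already suggested in the paper: reduce to the simple case using Proposition \ref{prop:estimateEnd}, then use Proposition \ref{prop:presandcopres1} to compare $\Hom$-spaces of simple objects with $\Hom$-spaces in the image of $t$, which are controlled by $F'/F$-full faithfulness and $F$-finiteness of $\mcA$.

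In detail: first, I invoke Proposition \ref{prop:estimateEnd} applied in the $F'$-linear finite abelian category $\mcA \otimes_F F'$ to obtain
\[\dim_{F'}\End(\bX) \le \dim_{F'}\End(\bX^\mathrm{ss}).\]
Since $\bX^\mathrm{ss}$ decomposes as a finite direct sum of simple objects, it suffices to bound $\dim_{F'}\Hom(\bS,\bT)$ for simple objects $\bS, \bT$ of $\mcA \otimes_F F'$. By Proposition \ref{prop:presandcopres1} there exist simple objects $S, T$ of $\mcA$ together with an epimorphism $\epsilon : t(S) \twoheadrightarrow \bS$ and a monomorphism $\mu : \bT \hookrightarrow t(T)$. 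The map $h \mapsto \mu \circ h \circ \epsilon$ defines an $F'$-linear homomorphism $\Hom(\bS, \bT) \to \Hom(t(S), t(T))$, and this homomorphism is injective because $\epsilon$ is epi and $\mu$ is mono. Finally, $F'/F$-full faithfulness of $t$ (Proposition \ref{lem:tisFpFff}) together with $F$-finiteness of $\mcA$ yields
\[\dim_{F'}\Hom(t(S),t(T)) = \dim_F\Hom(S,T) < \infty,\]
so $\dim_{F'}\Hom(\bS,\bT)$ is finite as required.

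The main obstacle is really packaged away in the preparatory Propositions \ref{prop:estimateEnd} and \ref{prop:presandcopres1}: the fact that in the simple case we get both an epi from and a mono into the image of $t$, even though (as remarked in the excerpt) we cannot in general co-present an arbitrary object by objects from $t(\mcA)$. Once these are available, the asymmetric sandwich $t(S) \twoheadrightarrow \bS$ and $\bT \hookrightarrow t(T)$ is precisely what is needed to control $\Hom$-spaces in the simple case, and Proposition \ref{prop:estimateEnd} bridges the gap from general objects to semisimple ones. The rest of the theorem is then a summary of what was shown in Subsections \ref{ss:thecategory} and \ref{ss:thefunctor}.
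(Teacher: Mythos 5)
Your proof is correct and takes essentially the same approach as the paper's: reduce to the simple case via Proposition \ref{prop:estimateEnd}, then sandwich simples between objects of $t(\mcA)$ using Proposition \ref{prop:presandcopres1} and conclude via $F'/F$-full faithfulness of $t$ and $F$-finiteness of $\mcA$. The only cosmetic difference is that you bound $\Hom(\bS,\bT)$ for arbitrary pairs of simples, whereas the paper reduces to $\End(\bS)$ for a single simple $\bS$, implicitly using Schur's lemma to pass from $\End(\bX^\ss)$ to the endomorphism rings of its simple summands.
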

\begin{proof}
(b): We have seen that $t$ is an $F$-linear exact functor. Proposition
\ref{lem:tisFpFff} shows that it is $F'/F$-fully faithful.

(a): By definition, $\mcA\otimes_FF'$ is a finite $F'$-linear abelian category.
It remains to show that $\End(\bX)$ is finite $F'$-dimensional for every object $\bX$ of $\mcA\otimes_FF'$.
Since $\dim_{F'}\End(\bX)\le\dim_{F'}\End\big((\bX)^\ss\big)$ by Proposition \ref{prop:estimateEnd},
it is sufficient to show that $\End(\bX)$
is finite-dimensional for all simple $\bX$. By Proposition \ref{prop:presandcopres1} we may choose an
object $S$ of $\mcA$, an epimorphism $t(S)\to\bX$ and a monomorphism $\bX\to t(S)$. We obtain an $F'$-linear
injection
\[\End(\bX)=\Hom(\bX,\bX)\into\Hom\big(t(S),t(S)\big)\stackrel{(b)}{\isom} F'\otimes_F\End_{\mcA}(S).\]
The target is finite-dimensional since $\mcA$ is $F$-finite, thus so is the source.
\end{proof}

\subsection{Universal Property for Abelian Categories}
Let $F'/F$ be a field extension, and consider an $F$-finite $F$-linear abelian category $\mcA$.
By Theorem \ref{thm:scalarextwoutup} we have an associated $F'$-finite $F'$-linear abelian category $\mcA\otimes_FF'$
and an $F$-linear exact functor $t:\:\mcA\to\mcA\otimes_FF'$.

The goal of this subsection is to show that this functor is ``universal'' among right exact $F'$-linear functors
with target an $F'$-linear abelian category. By this we mean that every such functor $V:\:\mcA\to\mcB$
``factors'' through $\mcA\otimes_FF'$ via a right exact $F'$-linear functor
$V:\:\mcA\otimes_FF'\to\mcB$, and does so ``uniquely'':
\begin{equation*}
\xymatrix{
\mcA\ar[dr]_V \ar[rr]^{t\phantom{m}} && \mcA\otimes_FF'\ar@{.>}[dl]^{V'}\\
 & \mcB}
\end{equation*}
Since we are working with functors,
we have to be more precise in stating this universal property.

\begin{thm}\label{thm:univpropscalex}
Let $\mcB$ be an $F'$-linear abelian category, and consider a right exact $F$-linear
functor $V:\:\mcA\to\mcB$. Then:
\begin{enumerate}
  \item There exists a right exact $F'$-linear functor $V':\:\mcA\otimes_FF\to\mcB$ and an isomorphism
  of functors $\alpha:\:V\Rightarrow V'\circ t$.
  \item If $(V_1',\alpha_1)$ and $(V_2',\alpha_2)$ both have the property stated in (a), then there exists
  a unique isomorphism of functors $\beta':\:V_1'\Rightarrow V_2'$ such that $\alpha_{2,X}=\beta'_{t(X)}\circ\alpha_{1,X}$ for every $X\in\mcA$.
\end{enumerate}
\end{thm}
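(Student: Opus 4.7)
The plan for part (a) is to construct $V'$ explicitly via presentations. Given an object $\bX$ of $\mcA\otimes_FF'$, Lemma \ref{lem:presandcopres1} yields an epimorphism $t(X_0)\to\bX$; applying the same lemma to its kernel produces a two-step presentation $t(X_1)\arrover{\bff}t(X_0)\to\bX\to 0$. By $F'/F$-full faithfulness of $t$ (Proposition \ref{lem:tisFpFff}), the morphism $\bff$ corresponds canonically to an element $\sum_i f'_i\otimes h_i\in F'\otimes_F\Hom_{\mcA}(X_1,X_0)$. Since $\mcB$ is $F'$-linear, I can form the morphism $\sum_i f'_i\cdot V(h_i):V(X_1)\to V(X_0)$ in $\mcB$, and set $V'(\bX)$ to be its cokernel.

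Well-definedness and functoriality are the crux. For well-definedness, I compare two presentations of the same $\bX$ via a lift produced by applying Lemma \ref{lem:presandcopres1} to appropriate kernels; the resulting comparison morphism, transported through $V$ via the $F'/F$-full faithfulness dictionary, shows that the two candidate cokernels in $\mcB$ are canonically isomorphic. For functoriality, any morphism $g:\bX\to\bY$ lifts to a morphism of presentations by the same mechanism, and two such lifts differ by a homotopy-like correction whose $V$-image factors through the cokernel, so the induced morphism $V'(g)$ is well-defined. The remaining properties follow easily: the natural isomorphism $\alpha:V\Rightarrow V'\circ t$ arises from the trivial presentation $0\to t(X)\to t(X)\to 0$, giving $V'(t(X))\isom V(X)$; $F'$-linearity is immediate from the defining formula; and right exactness follows by building compatible presentations of a sequence $\bX'\to\bX\to\bX''\to 0$ and invoking the Snake Lemma in $\mcB$ together with right exactness of $V$.

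For part (b), the isomorphism $\beta'$ is constructed objectwise from any presentation $t(X_1)\to t(X_0)\to\bX\to 0$: the isomorphisms $\alpha_{2,X_j}\circ\alpha_{1,X_j}^{-1}:V'_1(t(X_j))\isom V'_2(t(X_j))$ for $j=0,1$ are compatible with the image of $\bff$ under both $V'_1$ and $V'_2$ (by naturality of $\alpha_1,\alpha_2$ applied to the morphism $\sum_i f'_i\otimes h_i$ between $X_1$ and $X_0$ in $\ind\mcA$), so they induce a morphism between the cokernels $V'_1(\bX)\to V'_2(\bX)$ by right exactness of $V'_1$ and $V'_2$, and the Five Lemma makes this an isomorphism. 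Uniqueness of $\beta'$ is automatic: the compatibility requirement $\alpha_{2,X}=\beta'_{t(X)}\circ\alpha_{1,X}$ forces $\beta'$ on the essential image of $t$, and every object of $\mcA\otimes_FF'$ is a cokernel between such objects. The main obstacle I anticipate is the well-definedness of $V'$ on morphisms, which requires patient diagram chasing to show the construction is independent of both the choices of presentations and the choices of lifts.
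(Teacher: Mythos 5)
Your plan is the ``direct'' construction via choices of presentations, and it is precisely the approach the paper itself names and then deliberately abandons: right after stating the theorem the author writes that, since the presentations $t(X_1)\to t(X_0)\to\bX\to 0$ are not canonical, ``it seems difficult to verify that this idea gives us a well-defined functor $V'$ directly,'' and therefore detours through $(\ind\mcA)_{F'}$. There the canonical presentation of Lemma~\ref{lem:canopresen} uses $X_0:=|\bX|$, the underlying \emph{ind}-object of $\bX$ (which in general has infinite length and does \emph{not} lie in $\mcA$), and this is what makes the presentation functorial in $\bX$. The paper constructs $\ind V'$ on $(\ind\mcA)_{F'}$ from this canonical presentation (Lemmas~\ref{lem:indVgivesindVtwid} and~\ref{lem:VgivesindVp}) and only then restricts to $\mcA\otimes_FF'$ (Lemma~\ref{lem:indVpgivesVp}); uniqueness likewise goes through the ind-level statement (Lemma~\ref{lem:unicofindvip}).

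The gap in your sketch is concentrated exactly where you flag ``patient diagram chasing.'' The objects $t(X_0)$ with $X_0\in\mcA$ are \emph{not} projective in $\mcA\otimes_FF'$, and a morphism $g:\bX\to\bY$ need not lift to a chain map between arbitrary chosen presentations. Concretely, take $\mcA$ to be finite-dimensional modules over $E:=F[x]/(x^2)$, so $\mcA\otimes_FF'$ is finite-dimensional modules over $E':=F'[x]/(x^2)$. Let $\bX=\bY$ be the simple $E'$-module, $g=\id$, present $\bX$ by $0\to t(F)\to\bX\to 0$ and $\bY$ by $t(F)\to t(E)\to\bY\to 0$. Then $\Hom(t(F),t(E))\to\Hom(t(F),\bY)$ is the zero map (the image of any $F'\to E'$ lands in the socle, which maps to $0$ in $E'/(x)$), so no lift of $\id$ exists. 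Consequently one cannot simply ``lift $g$ to a morphism of presentations''; one must first \emph{replace} the source presentation by a common refinement produced from a fibered product plus Lemma~\ref{lem:presandcopres1}, and then prove a coherence statement that the induced isomorphisms of cokernels are compatible across refinements. That coherence is the real content of well-definedness and functoriality, and your proposal does not supply it. (The same issue infects the naturality of your $\alpha$ and the naturality of $\beta'$ in part~(b): both quietly presuppose the functoriality of the presentation-to-cokernel construction.) Without the common-refinement coherence argument written out -- or the paper's ind-category workaround that makes presentations canonical -- the proof is incomplete.
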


\begin{rem}
One might expect (since $t$ is exact) that if the functor $V$ in the statement of
Theorem \ref{thm:univpropscalex} is exact, then $V'$ is also exact. This
is false in general.
\end{rem}

The idea behind the proof of Theorem \ref{thm:univpropscalex} is to use the purported right exactness of
$V'$ for the proof of its existence. After all, by
Lemma \ref{lem:presandcopres1} and Theorem \ref{thm:scalarextwoutup}, every
object $\bX$ of $\mcA\otimes_FF'$ possesses a presentation
\[t(X_1)\arrover{\sum_i'\lambda_i\otimes f_i}t(X_0)\to\bX\to 0,\]
with $X_0,X_1\in\mcA$, and finitely many $\lambda_i\in F'$ and $f_i\in\Hom_{\mcA}(X_1,X_0)$. Therefore, by right exactness
and $F'$-linearity of $V'$, we should have
\[V'(\bX)\isom\coker\left(V(X_1)\arrover{\sum_i'\lambda_iV(f_i)}V(X_0)\right).\]
However, since there is no canonical such presentation, it seems difficult to verify that this
idea gives us a well-defined functor $V'$ directly. Hence, we take a detour through the respective ind-categories,
where canonical presentations exist.
We begin by supplementing Lemma \ref{lem:indcatgen}.

\begin{dfn}
Let $\mcB$ be an $F$-linear abelian category, and let\[V:\:\mcA\to\mcB\]be an $F$-linear
functor. The \emph{ind-extension} of $V$ is the $F$-linear functor $\ind V:\:\ind\mcA\to\ind\mcB$
mapping an object $(X_i)_{i\in I}$ of $\ind\mcA$ to $\ind V((X_i)_{i\in I}):=(VX_i)_{i\in I}$ in $\ind\mcB$, and a
homomorphism $f=\varprojlim_i\varinjlim_jf_{ij}$ in \[\Hom_{\ind\mcA}((X_i)_{i\in I},(Y_j)_{j\in J})=\varprojlim_i\varinjlim_j\Hom_{\mcA}(X_i,Y_j)\]
to $\ind V(f):=\varprojlim_i\varinjlim_jV(f_{ij})$.
\end{dfn}

\begin{lem}\label{lem:indVre}\begin{enumerate}
  \item $\ind(V)$ is a functor extending $V$ and functorial in $V$.
  \item If $V$ is right exact, then $\ind(V)$ is right exact.
\end{enumerate}
\end{lem}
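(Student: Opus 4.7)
I would verify directly from the definition that the given assignment on morphisms preserves identities and compositions. Identity preservation is immediate. For composition: given $f:(X_i)_{i}\to(Y_j)_{j}$ and $g:(Y_j)_{j}\to(Z_k)_{k}$ represented by compatible families $f_{ij}$ and $g_{jk}$, the composite is represented by a compatible family obtained by matching indices and composing in $\mcA$, so applying $V$ to each $f_{ij}$ and $g_{jk}$ and assembling yields $\ind V(g)\circ \ind V(f)$ by functoriality of $V$. That $\ind V$ extends $V$ is seen by evaluating on the singleton direct system $(X)_{\emptyset}$ representing an object $X$ of $\mcA$, on which $\ind V$ returns $(VX)_\emptyset$. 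Functoriality in $V$: a natural transformation $\eta:V\Rightarrow V'$ induces $\ind\eta:\ind V\Rightarrow\ind V'$ with components $(\ind\eta)_{(X_i)_{i\in I}}:=(\eta_{X_i})_{i\in I}$; naturality reduces to the naturality of $\eta$, and the assignment $\eta\mapsto\ind\eta$ preserves vertical composition and identities.

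\textbf{Part (b).} The strategy rests on two standard facts about ind-categories: filtered colimits are exact in $\ind\mcA$ and $\ind\mcB$, and every object of $\ind\mcA$ is the filtered colimit of the diagram defining it. From the definition, $\ind V$ commutes with these filtered colimits. Hence given a right exact sequence $X'\to X\to X''\to 0$ in $\ind\mcA$, I would first reduce to the case that this sequence is the filtered colimit of a diagram of right exact sequences \emph{with terms in $\mcA$}, by choosing a compatible cofinal indexing system for the three ind-objects and the maps between them. On each such small sequence in $\mcA$, $V$ is right exact by hypothesis, so its image is a right exact sequence in $\mcB$. Taking the filtered colimit back in $\ind\mcB$ then produces the required right exact sequence $\ind V(X')\to \ind V(X)\to \ind V(X'')\to 0$, using exactness of filtered colimits.

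\textbf{Main obstacle.} The main technical obstacle is the reduction step in (b): realising a right exact sequence in $\ind\mcA$ as a filtered colimit of right exact sequences in $\mcA$ with compatible index systems. The morphisms in $\ind\mcA$ live a priori in a double limit $\varprojlim\varinjlim$, and recovering a single filtered diagram requires passing to cofinal subsystems and replacing one ind-object at a time. Each step is routine but notationally heavy, and in practice one would appeal to the general theory of ind-categories (see \cite{Del87, Kas06}) which directly yields that $\ind$ preserves right-exactness of additive functors between abelian categories.
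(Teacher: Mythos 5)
Your proof is correct in outline, but it takes a genuinely different route from the paper: the paper's entire proof is a two-line citation to Kashiwara--Schapira (part (a) to their Proposition~6.1.9, part (b) to their Corollary~8.6.8), whereas you reconstruct the argument directly. Your part~(a) is a straightforward unwinding of the $\varprojlim\varinjlim$ description of morphisms and is fine, modulo the usual care needed because an element of $\varprojlim_i\varinjlim_j\Hom(X_i,Y_j)$ is a family of germs rather than a family of actual maps, so ``matching indices'' involves choosing representatives and checking independence of the choice. Your part~(b) rests on three standard facts --- objects of $\ind\mcA$ are filtered colimits of their defining diagrams, filtered colimits in $\ind\mcA$ and $\ind\mcB$ are exact, and $\ind V$ commutes with filtered colimits --- and you correctly identify the real work: straightening a morphism (or a two-term complex) of ind-objects into a single filtered diagram of morphisms with terms in $\mcA$. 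That step is indeed where the content lies; a slightly cleaner packaging is to straighten only the single morphism $X'\to X$, compute $X''=\coker$ levelwise using exactness of filtered colimits, push each levelwise cokernel through the right exact $V$, and take the colimit again --- this avoids speaking of ``filtered colimits of right exact sequences'' and only uses straightening once. In short, what the paper's approach buys is brevity (it delegates the straightening machinery entirely to the reference); what your approach buys is self-containment and a clear view of which structural facts about $\ind(-)$ are actually being used. Both are acceptable proofs.
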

\begin{proof}
(a): \cite[Proposition 6.1.9]{Kas06}, (b): \cite[Corollary 8.6.8]{Kas06}.
\end{proof}

\begin{lem}\label{lem:canopresen}
Every $F'$-module $\bX$ in $\ind\mcA$ has a functorial
presentation
\[\Pi(\bX):t(X_1)\arrover{d_1}t(X_0)\arrover{d_0}\bX\to 0\]
with $X_0,X_1\in\ind\mcA$.
\end{lem}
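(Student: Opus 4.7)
The plan is to exploit the adjunction from Proposition \ref{prop:tisleftadjtoforge}: the functor $t\colon\ind\mcA\to(\ind\mcA)_{F'}$ is left adjoint to the forgetful functor $U\colon(\ind\mcA)_{F'}\to\ind\mcA$, and for every $F'$-module $\bX=(X,\phi)$, the counit $\epsilon_{\bX}\colon tU(\bX)\to\bX$ (the map of Remark \ref{rem:tXbXcan}, corresponding to $\id_X$ under the adjunction) is an epimorphism. This canonical epimorphism gives the zeroth stage of $\Pi(\bX)$, and we only need to produce the preceding term functorially. The natural choice is to apply the same construction to the kernel of the counit, i.e.\ to take the first two stages of the monadic bar resolution associated to the adjunction $(t,U)$.

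More concretely, I would set
\[
X_0:=U(\bX)\in\ind\mcA,\qquad d_0:=\epsilon_{\bX}\colon t(X_0)\twoheadrightarrow\bX,
\]
let $K:=\ker(d_0)$ in the abelian category $(\ind\mcA)_{F'}$, and set
\[
X_1:=U(K)\in\ind\mcA,\qquad d_1\colon t(X_1)=tU(K)\xrightarrow{\epsilon_K}K\hookrightarrow t(X_0).
\]
Since $\epsilon_K$ is again surjective by Remark \ref{rem:tXbXcan}, the image of $d_1$ is exactly $K=\ker(d_0)$, and $d_0$ is surjective, so $\Pi(\bX)$ is exact as required. Note that both $X_0$ and $X_1$ lie in $\ind\mcA$ by construction.

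Functoriality is automatic: given a morphism $\bff\colon\bX\to\bY$ of $F'$-modules, the naturality of the counit gives a commutative square $t(U\bff)\colon tU\bX\to tU\bY$ over $\bff$, hence an induced morphism $K_{\bX}\to K_{\bY}$ of kernels; applying $U$ yields the map $X_1\to Y_1$, and the naturality of $\epsilon$ again ensures compatibility with the $d_i$. I do not expect a genuine obstacle here: the only tiny point requiring care is that $\epsilon_K$ really is surjective (not merely for $\bX$ itself), but this is exactly the general statement in Remark \ref{rem:tXbXcan} applied to the $F'$-module $K$. The whole construction is canonical, so the functoriality claim is essentially tautological.
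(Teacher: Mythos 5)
Your proposal is correct and is essentially the paper's own argument: the paper also takes $X_0:=X$ (the underlying ind-object of $\bX$), $d_0$ the canonical surjection $t(X)\to\bX$ of Remark \ref{rem:tXbXcan}, then forms the kernel $(X_1,\phi_1):=\ker(d_0)$ in $(\ind\mcA)_{F'}$ and sets $d_1$ to be $\widetilde{\phi_1}:t(X_1)\to\ker(d_0)\subset t(X_0)$. Your phrasing in terms of the counit of the adjunction and naturality is just a cleaner packaging of the paper's explicit check of functoriality via $f_0:=\id\otimes f$ and $f_1:=\id\otimes(f_0|_{(X_1,\phi_1)})$.
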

\begin{proof}
For every object $\bX=(X,\phi)$ of $(\ind\mcA)_{F'}$, let $\widetilde{\phi}$ denote the
natural surjective homomorphism $t(X)\to\bX$
as in Remark \ref{rem:tXbXcan}.

We may now define our presentation: Given $\bX$ as above, we set $X_0:=X$, and $d_0:=\widetilde{\phi}$.
Then $\ker(d_0)$ is an $F'$-module $(X_1,\phi_1)$, and we set $d_1:=\widetilde{\phi_1}$. We obtain an exact sequence
\[t(X_1)\arrover{d_1}t(X_0)\arrover{d_0}\bX\to0\]
of $F'$-modules, which we denote as $\Pi(\bX)$.

Let us show that $\Pi(\bX)$ is functorial in $\bX$: Given another $F'$-module $\bY=(Y,\psi)$ and a homomorphism $f:\:\bX\to\bY$,
we set $f_0:=\id\otimes f$ and $f_1:=\id\otimes(f_0|_{(X_1,\phi_1)})$. We obtain a diagram
\[\xymatrix{
\Pi(\bX): & F'\otimes_FX_1 \ar[d]^{f_1} \ar[r] & F'\otimes_FX_0 \ar[d]^{f_0}\ar[r] & \bX \ar[d]^f \ar[r] & 0\\
\Pi(\bY): & F'\otimes_FY_1              \ar[r] & F'\otimes_FY_0             \ar[r] & \bY          \ar[r] & 0
}\]
This diagram commutes by definition, so we have constructed a canonical homomorphism $\Pi(f):\:\Pi(\bX)\to\Pi(\bY)$.
\end{proof}

\begin{lem}\label{lem:indVgivesindVtwid}
Let $\ind V:\:\ind\mcA\to\ind\mcB$ be a right exact $F$-linear functor.
Let $t(\ind\mcA)$ denote the full subcategory of $(\ind\mcA)_{F'}$ with
the image of $\ind\mcA$ under $t$ as objects.
There exists a natural $F'$-linear functor
\[\ind\widetilde{V}:\:t(\ind\mcA)\to\ind\mcB\]%
such that $\ind V=\ind\widetilde{V}\circ t$.
\end{lem}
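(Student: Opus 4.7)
The plan is to define $\ind\widetilde{V}$ on objects by the only possible formula $\ind\widetilde{V}(t(X)) := \ind V(X)$ and then to construct its action on morphisms by exploiting the $F'$-linearity of the target: in the intended context $\mcB$, and hence $\ind\mcB$, is $F'$-linear, so every object of $\ind\mcB$ carries a tautological $F'$-module structure.

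The core ingredient I would establish first is a natural isomorphism
\[
\ind V(F'\otimes_F X)\;\isom\;F'\otimes_F \ind V(X)
\]
of $F'$-modules in $\ind\mcB$, for every object $X$ of $\ind\mcA$. To prove it, write $F'=\varinjlim_\alpha V_\alpha$ as the filtered colimit of its finite-dimensional $F$-subspaces. By Definition \ref{dfn:extscalfield}, each $V_\alpha\otimes_F X$ is a finite direct sum of $\dim_F V_\alpha$ copies of $X$, so the additivity of $\ind V$ yields a canonical isomorphism $\ind V(V_\alpha\otimes_F X)\isom V_\alpha\otimes_F \ind V(X)$. Since $\ind V$ commutes with filtered colimits by construction, passing to the colimit over $\alpha$ gives the claimed isomorphism; naturality in $X$ makes it equivariant for the $F'$-actions on both sides.

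Given a morphism $h:t(X)\to t(Y)$ in $t(\ind\mcA)$, I would apply the $F$-linear functor $\ind V$ to obtain an $F'$-linear morphism $\ind V(h)$ between $F'$-modules in $\ind\mcB$; via the key isomorphism this is the same data as a morphism $t(\ind V(X))\to t(\ind V(Y))$ of $F'$-modules. Composing with the canonical surjection $t(\ind V(Y))\onto \ind V(Y)$ of Remark \ref{rem:tXbXcan} — available because $\ind V(Y)$ carries its tautological $F'$-module structure from the $F'$-linearity of $\ind\mcB$ — I obtain a morphism of $F'$-modules $t(\ind V(X))\to \ind V(Y)$, which corresponds by the adjunction of Proposition \ref{prop:tisleftadjtoforge} to a unique morphism $\ind V(X)\to \ind V(Y)$ in $\ind\mcB$. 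This I take to be $\ind\widetilde{V}(h)$. The compatibility $\ind\widetilde{V}\circ t=\ind V$ is then immediate: for $f:X\to Y$ in $\ind\mcA$ the map $t(f)$ becomes $t(\ind V(f))$ after applying $\ind V$ and the key isomorphism, and the composite with the Remark \ref{rem:tXbXcan} counit unwraps via the adjunction to $\ind V(f)$ itself.

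The main obstacle is verifying compatibility with composition. Given $h_1:t(X)\to t(Y)$ and $h_2:t(Y)\to t(Z)$, the identity $\ind\widetilde{V}(h_2\circ h_1)=\ind\widetilde{V}(h_2)\circ\ind\widetilde{V}(h_1)$ rests on the fact that the counit $t(\ind V(Y))\to \ind V(Y)$, transported through the key isomorphism, is compatible with the multiplication $F'\otimes_F F'\to F'$ and the $F'$-action on $\ind V(Y)$ in $\ind\mcB$ — a monadic coherence of modest technical weight, but one that requires laying out a single commutative diagram in which the key isomorphism, the adjunction of Proposition \ref{prop:tisleftadjtoforge}, and the two uses of the action map all interact. $F'$-linearity on Homs and additivity are then built into the construction, and functoriality in $V$ is automatic from the naturality of every ingredient used.
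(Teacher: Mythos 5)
Your proof is correct in outline, but it takes a genuinely different route from the paper's, and the comparison is worth making explicit. The paper works directly at the level of Hom-sets: it first treats the case of objects $t(X_i), t(Y_j)$ with $X_i, Y_j\in\mcA$, where Proposition \ref{lem:tisFpFff} gives the identification $\Hom(tX_i,tY_j)=F'\otimes_F\Hom(X_i,Y_j)$, so that $\ind\widetilde{V}$ is forced on Homs to be the $F'$-linear extension of $\ind V$; it then assembles the general case via the $\varprojlim_i\varinjlim_j$ decomposition, using that $t$ and $\ind V$ are continuous. Your proof instead works object-wise: you construct a natural isomorphism $\ind V(F'\otimes_F X)\isom F'\otimes_F\ind V(X)$ of $F'$-modules (a commutation $\ind V\circ t\isom t\circ\ind V$) and then define $\ind\widetilde{V}$ on morphisms by transporting $\ind V(h)$ through this isomorphism and applying the $t\dashv\text{forget}$ adjunction from Proposition \ref{prop:tisleftadjtoforge} together with the counit of Remark \ref{rem:tXbXcan}. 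A notable feature of your route is that it does not invoke Proposition \ref{lem:tisFpFff} at all — the key isomorphism uses only additivity of $\ind V$ and continuity — whereas the paper's argument relies on it essentially; in return, the paper's route delivers the explicit Hom-level formula for $\ind\widetilde{V}$ immediately, which is convenient for the subsequent lemmas.

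On the coherence point you flag at the end: it is indeed the crux, and it does go through, but let me name the precise mechanism, since the statement of what must commute is slightly subtler than ``compatibility with the multiplication $F'\otimes_F F'\to F'$''. Writing $\widetilde{h}$ for the transported morphism $t(\ind V(X))\to t(\ind V(Y))$ and $c,u$ for counit and unit, composition compatibility reduces to the identity $c_Z\circ\widetilde{h_2}\circ u_Y\circ c_Y=c_Z\circ\widetilde{h_2}$. This holds because $g:=c_Z\circ\widetilde{h_2}$ is equivariant simultaneously for two distinct $F'$-actions on $t(\ind V(Y))=F'\otimes_F\ind V(Y)$: the $t$-structure action on the first factor (because $g$ is a morphism of $F'$-modules) and the tautological action on $\ind V(Y)$ coming from $\ind\mcB$ being $F'$-linear (because any morphism whatsoever in $\ind\mcB$ is equivariant for the tautological actions). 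Since $u_Y\circ c_Y$ trades one action for the other, these two equivariances force $g\circ u_Y\circ c_Y=g$. This is the ``monadic coherence'' you allude to; it is indeed of modest weight once phrased this way, and both your proof and the paper's proof defer comparable verifications (the paper with the phrase ``the conscientious reader will check'').
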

\begin{proof}
Since $\ind\widetilde{V}$ is to extend $\ind V$,
on objects $t(X)$ of $t(\ind\mcA)$ we must and may set
\[\ind\widetilde{V}(t(X)):=\ind V(X).\]
Given two objects $t(X)$ and $t(Y)$ of $\ind\mcA$,
we have $X=\varinjlim_{i\in I}X_i$ and $Y=\varinjlim_{j\in J} Y_j$
for directed sets $I,J$ and objects $X_i,Y_j$ of $\mcA$.
Recall that both $t$ and $\ind V$ are right exact and note
that both commute with direct sums; it follows that both commute
with direct limits -- they are ``continuous''.

Considering first the special case that $X=X_i$ and $Y=Y_j$
for some $i$ and $j$, by Proposition \ref{lem:tisFpFff}
we have $\Hom(tX_i,tY_j)=F'\otimes_F\Hom(X_i,Y_j)$, so since
$\ind\widetilde{V}$ is to be $F'$-linear and extend $\ind V$
we see that $\ind\widetilde{V}:\:\Hom(t(X_i),t(Y_j))\to\Hom(\ind V(X_i),\ind V(Y_j))$
must be the $F'$-linear extension of $\ind V:\:\Hom(X_i,Y_j)\to\Hom(\ind V(X_i),\ind V(Y_j))$.

Returning to the general case, using the special case
given above for all $i$ and $j$ we define $\ind\widetilde{V}$
on homomorphisms as
\begin{eqnarray*}
\Hom(t(X),t(Y)) & = & \Hom (\varinjlim_i t(X_i),\varinjlim_j t(Y_j)),\quad\text{since $t$ is continuous}\\
                & = & \varprojlim_i\varinjlim_j\Hom(t(X_i),t(Y_j)),\quad\text{by definition}\\
                &\to& \varprojlim_i\varinjlim_j\Hom(\ind V(X_i),\ind V(Y_j)),\quad\text{special cases}\\
                & = & \Hom(\varinjlim_i\ind V(X_i),\varinjlim_j\ind V(Y_j)),\quad\text{by definition}\\
                & = & \Hom(\ind V(X),\ind V(Y)),\quad\text{$\ind\widetilde{V}$ is continuous}
\end{eqnarray*}
The conscientious reader will check that our definition
of $\ind\widetilde{V}$ is well-defined. Obviously, it extends
$\ind V$ in the sense that $\ind\widetilde{V}\circ t=\ind V$.
\end{proof}

\begin{lem}\label{lem:VgivesindVp}
Let $\ind V:\:\ind\mcA\to\ind\mcB$ be a right exact $F$-linear functor.
There exists a right exact $F'$-linear functor
\[\ind V':\:(\ind\mcA)_{F'}\to\ind\mcB\]%
and an isomorphism of functors
$\ind\alpha: \ind V\Rightarrow(\ind V')\circ t$.
\end{lem}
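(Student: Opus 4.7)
The plan is to construct $\ind V'$ by combining the canonical presentations of Lemma \ref{lem:canopresen} with the extended functor $\ind\widetilde{V}$ of Lemma \ref{lem:indVgivesindVtwid}. For every $F'$-module $\bX$ in $\ind\mcA$ with functorial canonical presentation $\Pi(\bX):\:t(X_1)\xrightarrow{d_1}t(X_0)\xrightarrow{d_0}\bX\to 0$, I would set
\[
\ind V'(\bX) := \coker\bigl(\ind\widetilde{V}(d_1):\:\ind V(X_1)\to\ind V(X_0)\bigr).
\]
A morphism $f:\:\bX\to\bY$ induces, via functoriality of $\Pi$, a morphism of presentations $\Pi(f):\:\Pi(\bX)\to\Pi(\bY)$, which after termwise application of $\ind\widetilde{V}$ yields a morphism of two-term complexes and hence a canonical morphism $\ind V'(f):\:\ind V'(\bX)\to\ind V'(\bY)$ on the cokernels.

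Functoriality of $\ind V'$ follows directly from functoriality of $\Pi$, $\ind\widetilde{V}$, and the cokernel construction. For $F'$-linearity, I would observe that the $F'$-action on $\bY$ propagates via naturality of $\Pi$ to compatible endomorphisms of each term of $\Pi(\bY)$; since $\ind\widetilde{V}$ is $F'$-linear on $\Hom$-sets between objects of $t(\ind\mcA)$ by construction, this endows $\ind V'(\bY)$ with a natural $F'$-module structure with respect to which $f\mapsto \ind V'(f)$ is $F'$-linear. To construct the isomorphism $\ind\alpha:\:\ind V\Rightarrow\ind V'\circ t$, I would examine the canonical presentation of $t(X)$ for $X\in\ind\mcA$: by Remark \ref{rem:tXbXcan} and Proposition \ref{prop:tisleftadjtoforge}, the map $d_0^{t(X)}:\:t(F'\otimes_F X)\to t(X)$ has underlying ind-morphism $\mu\otimes\id_X:\:F'\otimes_F F'\otimes_F X\to F'\otimes_F X$, which is split surjective in $\ind\mcA$. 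The natural inclusion $1\otimes\id_X:\:X\hookrightarrow F'\otimes_F X$ then yields, upon applying $\ind V$ and composing with the cokernel projection, a morphism $\ind\alpha_X:\:\ind V(X)\to\ind V'(t(X))$; additivity of $\ind V$ applied to the split presentation of $t(X)$ implies that $\ind\alpha_X$ is an isomorphism, and naturality in $X$ follows from functoriality of all the ingredients.

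The principal obstacle is verifying right exactness of $\ind V'$. Given a short exact sequence $\bX'\xrightarrow{f}\bX\xrightarrow{g}\bX''\to 0$ in $(\ind\mcA)_{F'}$, exactness of the forgetful functor to $\ind\mcA$ yields an exact sequence $X'_0\to X_0\to X''_0\to 0$ on underlying ind-objects, whence by right exactness of $\ind V$ the sequence $\ind V(X'_0)\to\ind V(X_0)\to\ind V(X''_0)\to 0$ is exact. Surjectivity of $\ind V'(g)$ is then immediate, since the surjection $\ind V(X_0)\to\ind V(X''_0)$ factors through $\ind V'(\bX)$ and maps onto the quotient $\ind V'(\bX'')$. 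Exactness at the middle term requires a diagram chase: I would assemble the $3\times 3$ commutative diagram whose rows are the images under $\ind\widetilde{V}$ of the three presentations $\Pi(\bX'), \Pi(\bX), \Pi(\bX'')$ together with their functorial connecting morphisms, compute the horizontal cokernels (yielding $\ind V'(\bX'), \ind V'(\bX), \ind V'(\bX'')$), and apply the snake lemma combined with right exactness of $\ind V$ on the induced sequence connecting the $X_1$-terms of the three presentations. Threading right exactness of $\ind V$ through the canonical presentations in this way is the technical core of the proof.
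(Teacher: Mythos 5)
Your construction of $\ind V'$ — via the canonical presentations $\Pi(\bX)$, the extended functor $\ind\widetilde{V}$, and cokernels — together with the functoriality via $\Pi(f)$, the $F'$-linearity, and the $3\times 3$ right-exactness argument, are all essentially the same as the paper's own proof. Where you diverge is in the verification that $\ind\alpha_X$ is an isomorphism, and this is where there is a genuine gap.

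You claim that the split surjectivity of $d_0$ (via the section $t(\iota)$, $\iota = 1\otimes\id_X$) together with additivity of $\ind V$ implies $\ind\alpha_X$ is an isomorphism. But the splitting of $d_0$ only yields that $\ind\alpha_X$ is a split monomorphism: it gives $\ind\widetilde{V}(d_0)\circ\ind V(\iota)=\id$ and hence $\ind V(X_0)\cong\ind V(X)\oplus\ker\big(\ind\widetilde{V}(d_0)\big)$. To conclude that $\ind\alpha_X$ is an isomorphism you also need $\im\big(\ind\widetilde{V}(d_1)\big)=\ker\big(\ind\widetilde{V}(d_0)\big)$; the inclusion $\subset$ is free, but the reverse inclusion is not a consequence of the splitting of $d_0$ alone. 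Note that $d_1$ factors through the surjection $\widetilde{\phi_1}:t(X_1)\to\ker(d_0)$, and $\ker(d_0)$ is \emph{not} of the form $t(Z)$, so one cannot simply identify $\ind\widetilde{V}(d_1)$ with $\ind V$ of the inclusion $X_1\hookrightarrow F'\otimes_F X_0$. What is actually required is a second homotopy $s_1:t(X_0)\to t(X_1)$ in $(\ind\mcA)_{F'}$ with $d_1 s_1 + t(\iota)\circ d_0=\id_{t(X_0)}$, i.e.\ a full contracting homotopy of $\Pi(t(X))$, so that $\ind\widetilde{V}$ (being a mere functor) preserves exactness of the image complex. Such an $s_1$ does exist: by adjunction it corresponds to a map $X_0\to F'\otimes_F X_1$, and one can write it down explicitly using the element $1\otimes b-b\otimes 1\in K=\ker(\mu)$ for $b\in F'$. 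But constructing $s_1$ is the actual content of this step; the observation that $d_0$ splits does not by itself give it. The paper avoids the issue entirely by instead identifying $\ind\widetilde{V}$ of $\Pi(t(X))$ with the sequence obtained from $K\to F'\otimes_F F'\xrightarrow{\mu}F'\to 0$ tensored over $F'$ with $\ind V(X)$, and then invoking the Five Lemma. Either route works; yours, as stated, is incomplete at precisely this point.
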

\begin{proof}
%
By Lemma \ref{lem:indVgivesindVtwid}, our given
functor $\ind V$ extends naturally to an
$F'$-linear functor \[\ind\widetilde{V}:\: t(\ind\mcA)\to\ind\mcB,\]
where $t(\ind\mcA)$ denotes the full subcategory of $(\ind V)_{F'}$ which
has as objects the essential image of $\ind\mcA$ under $t$.

In particular, given an $F'$-module $\bX=(X,\phi)$ in $\ind\mcA$, we may apply $\ind\widetilde{V}$ to the portion $t(X_1)\arrover{d_1}t(X_0)$
of the presentation $\Pi(\bX)$ given by Lemma \ref{lem:canopresen}, and set
\[\ind V'(\bX):=\coker\left(\ind V(X_1)\arrover{\ind\widetilde{V}(d_1)}\ind V(X_0)\right).\]
Given a second object $\bY$ and a homomorphism $f:\:\bX\to\bY$ in of $F'$-modules, we may apply
$\ind\widetilde{V}$ to the portion
\[\xymatrix{
t(X_1) \ar[d]^{f_1} \ar[r] & t(X_0) \ar[d]^{f_0}\\
t(Y_1)              \ar[r] & t(Y_0)             
}\]
of the homomorphism $\Pi(f)$ of presentations given in the proof of Lemma \ref{lem:canopresen}. Now the
universal property of cokernels implies that there is exactly one homomorphism $\ind V'(f):\:\ind V'(\bX)\to \ind V'(\bY)$
completing the image of the above commutative square under $\ind\widetilde{V}$ to a commutative diagram:
\[\xymatrix{
\ind\widetilde{V}(t(X_1)) \ar[d]^{\ind\widetilde{V}(f_1)} \ar[r] & \ind\widetilde{V}(t(X_0)) \ar[d]^{\ind\widetilde{V}(f_0)} \ar[r] & \ind V'(\bX) \ar@{.>}[d]^{\ind V'(f)} \ar[r] & 0\\
\ind\widetilde{V}(t(Y_1))                             \ar[r] & \ind\widetilde{V}(t(Y_0))              \ar[r] & \ind V'(\bY) \ar[r] & 0
}\]
The universal property of cokernels also shows that $\ind V'(\id_{\bX})=\id_{\ind V'(\bX)}$ for all $\bX$, and that $\ind V'(gf)=\ind V'(g)\ind V'(f)$ for all pairs of composable homomorphisms $\bX\arrover{f}\bY\arrover{g}\bZ$, so $\ind V'$ is indeed a functor $(\ind\mcA)_{F'}\to\ind\mcB$.

Let us prove that $\ind V'$ is right exact, so let $\bX\arrover{f}\bY\arrover{g}\bZ\to 0$
be a right exact sequence in $(\ind\mcA)_{F'}$. We obtain the following commutative diagram:
\[\def\objectstyle{\scriptstyle}
\def\labelstyle{\scriptstyle}
\xymatrix{
\ind V(X_1) \ar[r] \ar[d] & \ind V(X_0) \ar[r] \ar[d] & \ind V'(\bX) \ar[r] \ar[d]^{\ind V'(f)} & 0\\
\ind V(Y_1) \ar[r] \ar[d] & \ind V(Y_0) \ar[r] \ar[d] & \ind V'(\bY) \ar[r] \ar[d]^{\ind V'(g)} & 0\\
\ind V(Z_1) \ar[r] \ar[d] & \ind V(Z_0) \ar[r] \ar[d] & \ind V'(\bZ) \ar[r] \ar[d]         & 0\\
0 & 0 & 0
}\]
The rows are the sequences defining $\ind V'$ on objects, so they are exact by definition.
Since $V$ is right exact and the vertical homomorphisms arise from $\ind\mcA$, the first two columns are exact.
Hence, by the $3\times 3$-Lemma, the remaining column is exact, which is what we had to prove.

Finally, let us construct an isomorphism $\ind\alpha: \ind V\Rightarrow(\ind V')\circ t$ of functors.
We let $K$ be the kernel of the multiplication
$\mu$ of $F'$, so we have an exact sequence of $F'$-vector spaces
\[K\to F'\otimes_FF'\arrover{\mu}F'\to 0.\]
For every object $X$ of $\ind\mcA$, this induces an exact sequence
\begin{eqnarray}\label{eqn:muotimesVX}\def\objectstyle{\scriptstyle}
\def\labelstyle{\scriptstyle}
\xymatrix@C=9pt{
&&K\otimes_{F'}\ind V(X)\ar[r]& (F'\otimes_FF')\otimes_{F'}\ind V(X)\ar[r]& F'\otimes_{F'}\ind V(X)\ar[r]& 0
}
\end{eqnarray}
in $\ind\mcB$. We use this observation to construct the following diagram:
\[
\def\objectstyle{\scriptstyle}
\def\labelstyle{\scriptstyle}\xymatrix{
\ind \widetilde{V}(t(X_1)) \ar@{=}[d] \ar[r]^{\ind \widetilde{V}(d_1)} & \ind \widetilde{V}(t(X_0)) \ar@{=}[d] \ar[r] & \ind V'(t(X)) \ar[r]   & 0\\
\ind V(K\otimes_FX) \ar[r] \ar[d]^{\isom}                                 & \ind V(F'\otimes_FX) \ar[d]^{\isom} \\
K\otimes_F\ind V(X) \ar[r] \ar@{->>}[d]                                   & F'\otimes_F\ind V(X) \ar[d]^{\isom} \\
K\otimes_{F'}\ind V(X) \ar[r]                                             & (F'\otimes_FF')\otimes_{F'}\ind V(X) \ar[r]          & F'\otimes_{F'}\ind V(X) \ar[r] & 0\\
}\]
The first row is the definition of $\ind V'(t(X))$, which we unravel in the second row.
The isomorphisms connecting the second and third row are canonical, as are the epimorphism and the isomorphism
connecting the third row with the fourth, which is the exact sequence (\ref{eqn:muotimesVX}). One can check
that this diagram commutes, so by the Five Lemma we obtain a canonical isomorphism $\ind V'(t(X))\to F'\otimes_{F'}\ind V(X)$.
Precomposing the inverse of this isomorphism with the canonical isomorphism $\ind V(X)\to F'\otimes_{F'}\ind V(X)$, we obtain an isomorphism
\[(\ind\alpha)_{X}:\quad \ind V(X)\arrover{\isom}\ind V'(t(X)),\]
as desired. By construction, $(\ind\alpha)_X$ is natural in $X$, so $\ind\alpha$ is a homomorphism of functors.
Therefore $\ind\alpha$ is an isomorphism of functors, since we have already seen that $(\ind\alpha)_X$ is an isomorphism for each $X\in\mcA$.
\end{proof}

\begin{lem}\label{lem:indVpgivesVp}
Let $\mcA$ be an $F$-finite $F$-linear abelian category and $V:\:\mcA\to\mcB$ be a right exact $F$-linear
functor. Let $\ind V'$ be the right exact $F'$-linear functor associated to $\ind V$
via Lemma \ref{lem:VgivesindVp}. Then there exists a functor \[V':\:\mcA\otimes_FF'\To\mcB\]
such that $V'$ fulfills the requirements of Theorem \ref{thm:univpropscalex}(a) and
the following diagram commutes up to isomorphism of functors:
\[\xymatrix{
(\ind\mcA)_{F'} \ar[rr]^{\ind V'} && \ind\mcB\\
\mcA\otimes_FF' \ar@{}[u]|{\bigcup} \ar@{.>}[rr]^{V'} && \mcB \ar[u]
}\]
\end{lem}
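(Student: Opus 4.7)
My plan is to define $V'$ by restricting $\ind V'$ from $(\ind\mcA)_{F'}$ to the full subcategory $\mcA\otimes_F F'$, and then corestricting the target from $\ind\mcB$ back to $\mcB$. Applying Lemma \ref{lem:indVre}(b) to the right exact $F$-linear functor $V$, we obtain the right exact ind-extension $\ind V:\ind\mcA\to\ind\mcB$, which by Lemma \ref{lem:indVre}(a) restricts to $V$ on $\mcA$; Lemma \ref{lem:VgivesindVp} then furnishes the right exact $F'$-linear functor $\ind V':(\ind\mcA)_{F'}\to\ind\mcB$ together with an isomorphism $\ind\alpha:\ind V\Rightarrow\ind V'\circ t$.

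The crucial step is to show that every object $\bX$ of $\mcA\otimes_F F'$ admits a presentation
\[t(X_1)\To t(X_0)\To \bX\To 0\]
with $X_0,X_1\in\mcA$. To see this, apply Lemma \ref{lem:presandcopres1} to produce $X_0\in\mcA$ and an epimorphism $t(X_0)\to\bX$. Its kernel $\bK$ is a subobject of $t(X_0)$; since $t(X_0)\in\mcA\otimes_F F'$ has finite length, so does $\bK$, and thus $\bK\in\mcA\otimes_F F'$. A second application of Lemma \ref{lem:presandcopres1} yields $X_1\in\mcA$ with an epimorphism $t(X_1)\to\bK$, which composed with the inclusion $\bK\into t(X_0)$ furnishes the required map $d_1$.

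Applying the right exact functor $\ind V'$ to this presentation and invoking the isomorphism $\ind V'\circ t\isom\ind V$ (which agrees with $V$ on $\mcA$), we obtain an exact sequence
\[V(X_1)\To V(X_0)\To\ind V'(\bX)\To 0\]
in $\ind\mcB$. Since the natural embedding $\mcB\into\ind\mcB$ is fully faithful and exact, the cokernel computed in $\ind\mcB$ coincides with the cokernel computed in $\mcB$; hence $\ind V'(\bX)$ is canonically isomorphic to an object of $\mcB$. Setting $V'(\bX)$ equal to this object -- functorially in $\bX$ via the universal property of the cokernel, exactly as in the construction of $\ind V'$ itself -- we obtain the required functor $V':\mcA\otimes_F F'\to\mcB$ together with a canonical isomorphism of functors $\ind V'|_{\mcA\otimes_F F'}\isom V'$ with values in $\ind\mcB$, which is precisely the commutativity up to isomorphism of the displayed diagram. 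The isomorphism $\alpha:V\Rightarrow V'\circ t$ required by Theorem \ref{thm:univpropscalex}(a) is then obtained by restricting $\ind\alpha$ to $\mcA$ and composing with the above canonical identification.

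I expect the only delicate point to be producing the presentation with $X_0,X_1\in\mcA$ rather than in $\ind\mcA$; once this is in hand, everything else is a routine manipulation of right exact functors, together with the compatibility of cokernels under the fully faithful exact embedding $\mcB\into\ind\mcB$.
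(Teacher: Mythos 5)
Your proposal is correct and follows essentially the same route as the paper: define $V'$ as the restriction of $\ind V'$ to $\mcA\otimes_FF'$, use Lemma \ref{lem:presandcopres1} (applied twice, once to $\bX$ and once to the finite-length kernel $\bK$) to produce a presentation $t(X_1)\to t(X_0)\to\bX\to 0$ with $X_0,X_1\in\mcA$, apply the right exact $\ind V'$ and the isomorphism $\ind V'\circ t\isom\ind V$, and conclude via the exact fully faithful embedding $\mcB\into\ind\mcB$ that $\ind V'(\bX)$ lands (up to isomorphism) in $\mcB$. The only cosmetic point is that once you have committed to defining $V'$ as a corestriction of $\ind V'$, the closing remark about re-establishing functoriality ``via the universal property of the cokernel'' is superfluous — functoriality is inherited automatically from $\ind V'$, and the cokernel argument serves only to verify that the values lie in $\mcB$.
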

\begin{proof}
By Lemma \ref{lem:indVre}, $V$ induces a right exact $F$-linear functor $\ind V:\:\ind\mcA\to\ind\mcB$.
By Lemma \ref{lem:VgivesindVp}, $\ind V$ induces a right exact $F'$-linear functor
$\ind V':\:(\ind\mcA)_{F'}\to\ind\mcB$. We obtain the following diagram, which commutes
up to isomorphism of functors:
\[\xymatrix{
\mcA \ar[r]\ar[d]_V & \ind\mcA \ar[r]\ar[d]_{\ind V} & (\ind\mcA)_{F'}\ar[dl]^{\ind V'}\\
\mcB \ar[r]         & \ind\mcB
}\]
We let $V'$ be the restriction of $\ind V'$ to $\mcA\otimes_FF'\subset(\ind\mcA)_{F'}$. If
we prove that the image of $V'$ lies in the essential image of $\mcB$ in $\ind\mcB$, then we will
have shown that the following diagram commutes up to isomorphism of functors:
\[\xymatrix{
\mcA \ar[dr]_{V} \ar[r] & \mcA\otimes_FF' \ar@{.>}[d]^{V'} \ar[r] & (\ind\mcA)_{F'} \ar[d]^{\ind V'} \\
                        & \mcB \ar[r]                             & \ind\mcB
}\]
So let us do this: Given $\bX$ in $\mcA\otimes_FF'$, by Lemma \ref{lem:presandcopres1}
and Theorem \ref{thm:scalarextwoutup} there exists a right exact sequence
\[t(X_1)\arrover{g}t(X_1)\to \bX\to 0\]
in $(\ind\mcA)_{F'}$, with $X_0,X_1\in\mcA$ and $g\in F'\otimes\Hom_\mcA(X_0,Y_0)$.
Since $\ind V'$ is right exact, and its restriction to $\mcA$ is isomorphic to $V$, the induced sequence
\[V(X_1)\arrover{(\ind V')(g)} V(X_0)\to V'(\bX)\to 0\]
is exact in $\ind\mcB$. Since $\mcB\to\ind\mcB$ is exact, it follows that $V'(\bX)$
is isomorphic to the cokernel of the homomorphism $\ind V'(g)$ as calculated in
the full subcategory $\mcB$.
\end{proof}

We turn to the unicity of our extensions $\ind V'$ and $V'$.

\begin{lem}\label{lem:unicofindvip}
Let $\ind V_1,\,\ind V_2:\:\ind\mcA\to\ind\mcB$ be two right exact
$F$-linear functors, and let $(\ind V_1',\ind\alpha_1)$, $(\ind V_2',\ind\alpha_2)$
be extensions as in Lemma \ref{lem:VgivesindVp} of $\ind V_1$, $\ind V_2$,
respectively.

For every homomorphism of functors $\ind\beta:\:\ind V_1\Rightarrow \ind V_2$
there exists a unique homomorphism of functors $\ind \beta':\: \ind V_1'\Rightarrow\ind V_2'$
such that $\ind\alpha_{2,X}\circ \ind\beta_{X}=\ind\beta'_{t(X)}\circ \ind\alpha_{1,X}$ for all $X\in\ind\mcA$.

Moreover, $\ind\beta$ is a monomorphism (resp. epimorphism, resp. isomorphism) if and only if
$\ind\beta'$ is.
\end{lem}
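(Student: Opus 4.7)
The strategy is to mirror the construction of $\ind V'$ from Lemma \ref{lem:VgivesindVp}: for each $\bX\in(\ind\mcA)_{F'}$, use the functorial presentation $\Pi(\bX)\colon t(X_1)\to t(X_0)\to\bX\to 0$ from Lemma \ref{lem:canopresen}, and invoke the universal property of cokernels. Applying $\ind\widetilde{V_i}$ (Lemma \ref{lem:indVgivesindVtwid}) to the initial segment $t(X_1)\xrightarrow{d_1}t(X_0)$ yields the right-exact rows of the diagram
\[
\xymatrix{
\ind V_1(X_1)\ar[r]\ar[d]_{\ind\beta_{X_1}} & \ind V_1(X_0)\ar[r]\ar[d]_{\ind\beta_{X_0}} & \ind V_1'(\bX)\ar[r]\ar@{.>}[d]^{\ind\beta'_{\bX}} & 0\\
\ind V_2(X_1)\ar[r] & \ind V_2(X_0)\ar[r] & \ind V_2'(\bX)\ar[r] & 0.
}
\]
The left square commutes by naturality of $\ind\beta$ applied to the ind-morphism underlying $d_1$, and the universal property of cokernels supplies a unique $\ind\beta'_{\bX}$ closing the right square.

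Next I would verify naturality of $\ind\beta'$ in $\bX$ using the functoriality of $\Pi$ (proof of Lemma \ref{lem:canopresen}): given $f\colon\bX\to\bY$, applying $\ind\widetilde{V_i}$ to $\Pi(f)$ and invoking the universal property of cokernels twice yields the identity $\ind\beta'_{\bY}\circ\ind V_1'(f)=\ind V_2'(f)\circ\ind\beta'_{\bX}$. The required compatibility $\ind\alpha_{2,X}\circ\ind\beta_X=\ind\beta'_{t(X)}\circ\ind\alpha_{1,X}$ on $t$-objects would then be verified by tracing through the explicit description of $\ind\alpha_i$ given at the end of the proof of Lemma \ref{lem:VgivesindVp}. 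For uniqueness, any candidate $\ind\beta''$ must agree with $\ind\beta'$ on the essential image of $t$ by this compatibility identity; for arbitrary $\bX$, the canonical epimorphism $\ind V_1'(t(X_0))\twoheadrightarrow\ind V_1'(\bX)$ (supplied by right-exactness of $\ind V_1'$ and the surjectivity of $d_0$) then forces $\ind\beta''_{\bX}=\ind\beta'_{\bX}$ by naturality.

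For the ``moreover'' assertion, the implication from $\ind\beta'$ having the property to $\ind\beta$ having the same property is immediate, since each $\ind\beta_X$ equals the conjugate $\ind\alpha_{2,X}^{-1}\circ\ind\beta'_{t(X)}\circ\ind\alpha_{1,X}$ of the invertible $\ind\alpha_{i,X}$. For the converse, chasing in the displayed diagram handles two cases cleanly: if $\ind\beta_{X_0}$ is an epimorphism then so is $\ind\beta'_{\bX}$, via a short surjectivity chase using that $\ind V_2(X_0)\twoheadrightarrow\ind V_2'(\bX)$ is epi; and if both $\ind\beta_{X_0}$ and $\ind\beta_{X_1}$ are isomorphisms then $\ind\beta'_{\bX}$ is an isomorphism by the five-lemma for right-exact sequences. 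The monomorphism case is the main technical obstacle, since standard right-exact diagram chasing is insufficient (abstract toy diagrams with right-exact rows and all but the rightmost vertical map mono, where the rightmost is not mono, are easy to produce). To get around this I would invoke the short exact sequence of functors $0\Rightarrow\ind V_1\Rightarrow\ind V_2\Rightarrow\coker(\ind\beta)\Rightarrow 0$ and apply the snake lemma to its evaluation at $X_1\to X_0$; the crux then becomes proving that the connecting homomorphism vanishes, which I expect to follow by exploiting the special structure of the canonical presentation, namely that the underlying ind-morphism of $X_1\hookrightarrow F'\otimes_FX_0$ is a monomorphism and that iterating $\Pi$ yields a longer exact sequence $t(X_2)\to t(X_1)\to t(X_0)\to\bX\to 0$ in $(\ind\mcA)_{F'}$.
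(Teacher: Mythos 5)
Your construction of $\ind\beta'$ via the functorial presentation $\Pi$ and the cokernel universal property, the verification of naturality and of the compatibility with $\ind\alpha_i$, and the uniqueness argument all agree in both structure and substance with the paper's proof. The epimorphism and isomorphism cases of the ``moreover'' assertion are also handled correctly and are exactly what the paper's terse invocation of the Five Lemma covers.

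You are right to be suspicious of the monomorphism case, and right that right-exact diagram chasing is insufficient for it; but the snake-lemma repair you sketch cannot succeed, because the monomorphism assertion is in fact \emph{false} as stated. Take $F=\bbF_2(t)$, $F'=F(s)$ with $s^2=t$, and let $\mcA=\mcB=\Vect_{F'}$, viewed as an $F$-finite $F$-linear abelian category. Then $F'\otimes_FF'\cong F'[y]/(y^2)$ with $y=1\otimes s-s\otimes1$, and $\mcA\otimes_FF'$ is the category of finite-length $F'[y]/(y^2)$-modules. A right exact $F$-linear functor $\ind\Vect_{F'}\to\ind\Vect_{F'}$ is determined by its value $W$ on $F'$ together with the $F$-algebra homomorphism $\phi\colon F'\to\End_{F'}(W)$ recording its effect on $\End_{\mcA}(F')=F'$; concretely it is $X\mapsto W\otimes_{\phi,F'}X$. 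Let $\ind V_1$ be the identity, so $W_1=F'$ and $\phi_1=\id$; let $\ind V_2$ be determined by $W_2=F'^2$ and the Jordan block $\phi_2(s)=\left(\begin{smallmatrix}s&1\\0&s\end{smallmatrix}\right)$; and let $\ind\beta$ be the natural transformation induced by the inclusion $\beta\colon W_1\to W_2$, $1\mapsto e_1$, of the socle. Since tensoring over a field is exact, each $\ind\beta_X=\beta\otimes\id_X$ is a monomorphism, so $\ind\beta$ is a natural monomorphism. Unwinding the construction of Lemma \ref{lem:VgivesindVp} gives $\ind V_i'(\bX)\cong W_i\otimes_{F'\otimes_FF'}\bX$, and $y$ acts on $W_i$ as $\phi_i(s)-s$, so $yW_1=0$ and $yW_2=F'e_1$. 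Taking $\bX=F'[y]/(y)$, the unique simple object, one computes $\ind V_1'(\bX)=W_1/yW_1=F'\neq0$ and $\ind V_2'(\bX)=W_2/yW_2\cong F'$, while $\ind\beta'_{\bX}$ is induced by $\beta$ and hence is the \emph{zero} map, since $\beta(W_1)=F'e_1=yW_2$. In particular the connecting homomorphism you hope to kill in your snake-lemma plan is actually surjective onto $\ind V_1'(\bX)$.

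So the paper's appeal to the Five Lemma is genuinely erroneous for the monomorphism clause, not merely terse, and your instinct to distrust it was correct. The cure is simply to drop the monomorphism clause from the ``moreover'' (retaining the epimorphism and isomorphism equivalences). This is harmless downstream: the lemma is applied only with $\ind\beta=\id$ (Lemma \ref{lem:unicofvip}), and Proposition \ref{propunivpropscalexfuncinV}, which inherits the same over-claim, is in turn invoked only through its isomorphism case.
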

\begin{proof}
For $\bX\in(\ind\mcA)_{F'}$, the sequences $\ind V_i'(\Pi(\bX))$ are both exact, since both $\ind V_i'$ are
right exact by assumption. They are connected by means of the following commutative diagram
with exact rows:
\[\xymatrix{
\ind V_1'(t(X_1))\ar[r]\ar[d]^{(\ind \alpha_{1,X_1})^{-1}} & \ind V_1'(t(X_0)) \ar[r]\ar[d]^{(\ind \alpha_{1,X_0})^{-1}} & \ind V_1'(\bX) \ar[r] & 0\\
\ind V(X_1) \ar[r]\ar[d]^{\ind \beta_{X_1}} & \ind V(X_0)\ar[d]^{\ind \beta_{X_0}}\\
\ind V(X_1) \ar[r]\ar[d]^{\ind \alpha_{2,X_1}} & \ind V(X_0)\ar[d]^{\ind \alpha_{2,X_0}}\\
\ind V_2'(t(X_1))\ar[r]                                      & \ind V_2'(t(X_0)) \ar[r]                           & \ind V_2'(\bX) \ar[r] & 0\\
}\]
By the universal property of cokernels, we obtain a unique homomorphism $\ind \beta'_{\bX}:\:V_1'(\bX)\to V_2'(\bX)$ completing the diagram
to a homomorphism of right exact sequences. By the Five Lemma, $\ind\beta'_{\bX}$ is a monomorphism
(resp. epimorphism, resp. isomorphism) if and only if $\ind\beta$ is.
Now by construction $\ind \beta'_{\bX}$ is natural in $\bX$, so $\ind\beta':\:\ind V_1'\Rightarrow \ind V_2'$
is a homomorphism of functors, which is a monomorphism
(resp. epimorphism, resp. isomorphism) if and only if $\ind\beta$ is.

The same diagram shows that any homomorphism $\ind V_1'\Rightarrow\ind  V_2'$ which restricts
to $(\ind \alpha_2)\circ(\ind\beta)\circ(\ind \alpha_1)^{-1}$ on $t(\ind \mcA)$ must coincide with $\ind \beta'$.

It remains to show that $\ind \beta'$ restricts in such a way. But this again follows from the same diagram, since if $\bX=t(\widetilde{X})$
for $\widetilde{X}\in\ind \mcA$, then $\ind \alpha_{2,\widetilde{X}}\circ\ind\beta_{X}\circ(\ind \alpha_{1,\widetilde{X}})^{-1}$
fits in the same place as $\ind \beta'_{t(\widetilde{X})}$, so the two homomorphisms must coincide
by the universal property of cokernels.
\end{proof}

\begin{lem}\label{lem:unicofvip}
Given two pairs $(V_i',\alpha_i)$, $(V_2',\alpha_2)$ extending $V$
as in Theorem \ref{thm:univpropscalex}(a), there exists a unique isomorphism of functors $\beta':\: V_1'\Rightarrow V_2'$
such that $\beta'_{t(X)}\circ\alpha_{1,X}=\alpha_{2,X}$ for all $X\in\mcA$.
\end{lem}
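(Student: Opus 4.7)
The plan is to mimic, in this simpler setting, the argument of Lemma \ref{lem:unicofindvip}, replacing the canonical presentation $\Pi(\bX)$ with a non-canonical presentation whose terms lie in $t(\mcA)$. For each object $\bX$ of $\mcA\otimes_FF'$, I first construct a presentation $t(X_1)\arrover{d_1}t(X_0)\arrover{d_0}\bX\to 0$ with $X_0,X_1\in\mcA$. This is obtained by applying Lemma \ref{lem:presandcopres1} to $\bX$ to get $d_0$, and then again to $\ker(d_0)$ (which lies in $\mcA\otimes_FF'$ since that category is closed under subobjects) to obtain an epimorphism $t(X_1)\onto\ker(d_0)$, composed with the inclusion to define $d_1$.

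Applying the right-exact $F'$-linear functors $V_i'$ yields exact sequences
\[V_i'(t(X_1))\xrightarrow{V_i'(d_1)}V_i'(t(X_0))\to V_i'(\bX)\to 0\qquad (i=1,2).\]
The crucial observation is that the left square in
\[\xymatrix{
V_1'(t(X_1)) \ar[r]^{V_1'(d_1)} \ar[d]_{\alpha_{2,X_1}\alpha_{1,X_1}^{-1}} & V_1'(t(X_0)) \ar[d]^{\alpha_{2,X_0}\alpha_{1,X_0}^{-1}} \ar[r] & V_1'(\bX) \ar@{.>}[d]^{\beta'_\bX} \ar[r] & 0 \\
V_2'(t(X_1)) \ar[r]^{V_2'(d_1)} & V_2'(t(X_0)) \ar[r] & V_2'(\bX) \ar[r] & 0
}\]
commutes: by the $F'/F$-full faithfulness of $t$ (Proposition \ref{lem:tisFpFff}) we may write $d_1=\sum_k\lambda_k\otimes f_k$ with $\lambda_k\in F'$ and $f_k\in\Hom_\mcA(X_1,X_0)$; then $F'$-linearity of $V_i'$ and naturality of $\alpha_i$ give $\alpha_{i,X_0}^{-1}\circ V_i'(d_1)\circ\alpha_{i,X_1}=\sum_k\lambda_k V(f_k)$ in $\Hom_\mcB(V(X_1),V(X_0))$ (using the $F'$-linear structure of $\mcB$), which is manifestly independent of $i$. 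The universal property of cokernels then produces a unique $\beta'_\bX:V_1'(\bX)\to V_2'(\bX)$ completing the diagram, and the Five Lemma shows that it is an isomorphism.

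The remaining verifications are routine. The compatibility $\beta'_{t(X)}\circ\alpha_{1,X}=\alpha_{2,X}$ for $X\in\mcA$ follows by choosing the trivial presentation $0\to t(X)\arrover{\id}t(X)\to 0$, which forces $\beta'_{t(X)}=\alpha_{2,X}\circ\alpha_{1,X}^{-1}$. Naturality of $\beta'$ in $\bX$ is obtained by lifting a morphism $f:\bX\to\bY$ to a morphism between chosen presentations (again via Lemma \ref{lem:presandcopres1}) and applying the universal property of cokernels. Uniqueness is straightforward: any candidate $\beta'$ is forced to satisfy $\beta'_{t(X)}=\alpha_{2,X}\circ\alpha_{1,X}^{-1}$, and naturality applied to the epimorphism $t(X_0)\onto\bX$ then determines $\beta'_\bX$ for any $\bX$.

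The main obstacle will be verifying independence of $\beta'_\bX$ from the choice of presentation. This is handled by the standard comparison argument: given two presentations of $\bX$, one forms a common refinement via direct sums (using Lemma \ref{lem:presandcopres1} and the closure of $\mcA\otimes_FF'$ under subobjects), and the universal property of cokernels forces the two candidate $\beta'_\bX$'s both to agree with the one built from the refined presentation. A conceptually cleaner alternative would be to ind-extend each $V_i'$ to a right-exact $F'$-linear functor on $(\ind\mcA)_{F'}$ and invoke Lemma \ref{lem:unicofindvip} with $\ind V_1=\ind V_2=\ind V$ and $\ind\beta=\id_{\ind V}$, but this route requires the identification $(\ind\mcA)_{F'}\isom\ind(\mcA\otimes_FF')$, which is not explicitly established in the excerpt.
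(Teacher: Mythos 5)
Your proof is correct in outline, but it takes a genuinely different route from the paper's. The paper does not re-derive anything at the level of $\mcA\otimes_FF'$: it simply applies the ind-extension functor $\ind(-)$ to the $V_i'$ and invokes Lemma \ref{lem:unicofindvip} with $\ind\beta=\id_{\ind V}$, where the canonical presentation $\Pi(\bX)$ of Lemma \ref{lem:canopresen} does all the bookkeeping (well-definedness and naturality come for free because $\Pi$ is functorial). You, by contrast, work directly in $\mcA\otimes_FF'$ with the non-canonical presentations supplied by Lemma \ref{lem:presandcopres1}, which is more elementary and avoids the identification $(\ind\mcA)_{F'}\isom\ind(\mcA\otimes_FF')$ — an identification you rightly note is used tacitly in the paper. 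The price you pay is that you must hand-check independence of the choice of presentation and naturality; the paper buys both at once with the canonical $\Pi$.

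Two small points where your sketch needs tightening. First, ``common refinement via direct sums'' is not quite right: given two epimorphisms $p:t(X_0)\onto\bX$ and $q:t(Y_0)\onto\bX$, a direct sum like $t(X_0\oplus Y_0)$ only reproduces $p$ (or $q$) after projection, not a common cover factoring through both. What works is the fibre product: take $t(Z_0)\onto t(X_0)\times_{\bX}t(Y_0)$ via Lemma \ref{lem:presandcopres1} and compose with the two projections to obtain $a,b$ with $pa=qb$; naturality of $\beta'$ on $t(\mcA)$ with respect to $a$ and $b$, plus the fact that $V_1'(pa)$ is epi, forces the two candidate $\beta'_\bX$'s to coincide. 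Second, for naturality in $f:\bX\to\bY$, one cannot lift $f$ against an arbitrary pre-chosen $q:t(Y_0)\onto\bY$ (the $t(X_0)$ are not projective). Instead one should choose the covering of $\bY$ \emph{after} $f$: take $q':t(Y_0')\onto\bY$ and use $q:=(f\circ p,\,q'):t(X_0\oplus Y_0')\onto\bY$, so that the inclusion $t(X_0)\hookrightarrow t(X_0\oplus Y_0')$ is a lift of $f$; well-definedness then allows you to replace this auxiliary $q$ by any other. Your argument for the commutation of the left square via $F'/F$-full faithfulness of $t$, $F'$-linearity of $V_i'$ and naturality of $\alpha_i$ is exactly right, and your uniqueness argument is fine.
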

\begin{proof}
Given two such pairs of data $(V_i',\alpha_i)$, using Lemma \ref{lem:indVre} we obtain two pairs of data
$(\ind V_i',\ind\alpha_i)$ extending $\ind V$ as in Lemma \ref{lem:VgivesindVp}.
Lemma \ref{lem:unicofindvip} applied to $\ind\beta:=\id_{\ind V}$ shows that there exists an isomorphism of functors
$\ind\beta':\:\ind V_1'\Rightarrow\ind V_2'$ such that
$\ind \beta'_{t(X)}\circ\ind\alpha_{1,X}=\ind \alpha_{2,X}$ for all $X\in\ind\mcA$.
The restriction $\beta'$ of $\ind\beta'$ to $\mcA\otimes_FF'\subset(\ind\mcA)_{F'}$ is then
an isomorphism of functors $V_1'\Rightarrow V_2'$ with the required properties.

Let us show that this $\beta'$ is unique. Given two isomorphisms of functors $\beta'_1,\beta'_2: V_1'\Rightarrow V_2'$
with an identification of isomorphisms
\[\beta'_1\mid_{t(\mcA)}=\alpha_2\circ\alpha_1^{-1}=\beta'_2\mid_{t(\mcA)}:\:V_1'\Rightarrow V_2',\]
applying $\ind(-)$ gives us an identification of isomorphisms
\[\ind\beta'_1\mid_{t(\ind\mcA)}=\ind(\alpha_2\circ\alpha_1^{-1})=\ind\beta'_2\mid_{t(\ind\mcA)}:\:\ind V_1'\Rightarrow \ind V_2'\]
by Lemma \ref{lem:indVre}, where $\ind(\alpha_2\circ\alpha_1^{-1})=\ind\alpha_2\circ\ind\alpha_1^{-1}$ and clearly $t(\ind\mcA)=\ind t(\mcA)$.
Lemma \ref{lem:unicofindvip} shows that $\ind\beta'_1=\ind\beta'_2$,
so restricting to $t(\mcA)$ we obtain $\beta'_1=\beta'_2$ as desired.
\end{proof}

\begin{proof}[Proof of Theorem \ref{thm:univpropscalex}]
(a): Lemma \ref{lem:indVpgivesVp}, (b): Lemma \ref{lem:unicofvip}.
\end{proof}

\begin{prop}\label{propunivpropscalexfuncinV}
Let $\mcA$ be an $F$-finite $F$-linear abelian category, $\mcB$ an $F'$-linear abelian category, and
$V_1',V_2':\:\mcA\otimes_FF'\to\mcB$ two right exact $F'$-linear functors. Then for every
homomorphism of functors $\beta:\:V_1'\circ t\Rightarrow V_2'\circ t$ there exists
a unique homomorphism of functors $\beta':\:V_1'\Rightarrow V_2'$ extending $\beta$.

Moreover, $\beta$ is a monomorphism (resp. epimorphism, resp. isomorphism) if
and only if $\beta'$ is such.
\end{prop}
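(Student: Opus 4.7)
The strategy is to reduce existence and the mono/epi/iso comparison to Lemma~\ref{lem:unicofindvip} via the ind-category construction, while uniqueness admits a cleaner direct proof using presentations. Write $W_i := V_i' \circ t: \mcA \to \mcB$, a right exact $F$-linear functor.

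\textbf{Existence.} Successively apply Lemma~\ref{lem:indVre} and Lemma~\ref{lem:VgivesindVp} to obtain right exact $F'$-linear functors $\ind W_i': (\ind\mcA)_{F'} \to \ind\mcB$ together with isomorphisms $\ind\alpha_i: \ind W_i \Rightarrow \ind W_i' \circ t$. By Lemma~\ref{lem:indVpgivesVp}, the restriction $\ind W_i'|_{\mcA \otimes_F F'}$ takes values in $\mcB$ and furnishes a second extension of $W_i$ satisfying Theorem~\ref{thm:univpropscalex}(a), so Lemma~\ref{lem:unicofvip} provides canonical isomorphisms $\gamma_i: V_i' \Rightarrow \ind W_i'|_{\mcA \otimes_F F'}$ with $\gamma_{i,t(X)} = \ind\alpha_{i,X}$ for $X \in \mcA$. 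Applying $\ind$ to $\beta: W_1 \Rightarrow W_2$ yields $\ind\beta: \ind W_1 \Rightarrow \ind W_2$, and Lemma~\ref{lem:unicofindvip} produces a unique $\ind\beta': \ind W_1' \Rightarrow \ind W_2'$ compatible with the $\ind\alpha_i$, which is moreover mono/epi/iso iff $\ind\beta$ is. Define $\beta' := \gamma_2^{-1} \circ (\ind\beta'|_{\mcA \otimes_F F'}) \circ \gamma_1$; the identity $\beta'_{t(X)} = \beta_X$ for $X \in \mcA$ falls out of the compatibility $\ind\beta'_{t(X)} \circ \ind\alpha_{1,X} = \ind\alpha_{2,X} \circ \ind\beta_X$ by cancelling the isomorphisms $\ind\alpha_{i,X} = \gamma_{i,t(X)}$.

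\textbf{Uniqueness.} Given two extensions $\beta_1', \beta_2'$ of $\beta$ and an object $\bX \in \mcA \otimes_F F'$, apply Lemma~\ref{lem:presandcopres1} twice, using that $\mcA \otimes_F F'$ is closed under subobjects within $(\ind\mcA)_{F'}$ (since its objects have finite length), to obtain a presentation $t(X_1) \to t(X_0) \to \bX \to 0$ in $\mcA \otimes_F F'$ with $X_0, X_1 \in \mcA$. Right exactness of $V_i'$ turns this into a pair of right exact rows in $\mcB$; by naturality both candidates $\beta'_{j, \bX}$ complete the resulting ladder with common left columns $\beta_{X_1}, \beta_{X_0}$, and the universal property of cokernels forces $\beta'_{1, \bX} = \beta'_{2, \bX}$.

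\textbf{Mono/epi/iso.} The functor $\ind$ preserves and reflects monomorphisms, epimorphisms, and isomorphisms of natural transformations: preservation follows from exactness of filtered colimits in $\ind\mcB$ together with exactness of $\mcB \hookrightarrow \ind\mcB$, and reflection from restricting indices to $\mcA \hookrightarrow \ind\mcA$ and full faithfulness of $\mcB \hookrightarrow \ind\mcB$. Chaining through $\beta \leftrightsquigarrow \ind\beta \leftrightsquigarrow \ind\beta' \leftrightsquigarrow \ind\beta'|_{\mcA \otimes_F F'} \leftrightsquigarrow \beta'$, where the middle equivalence is Lemma~\ref{lem:unicofindvip} and the last is conjugation by the isomorphisms $\gamma_i$, delivers the claim. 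The main obstacle is simply bookkeeping the compatibilities among the four functors $V_i'$, $\ind W_i'|_{\mcA \otimes_F F'}$, $\ind W_i'$, $\ind W_i$ at the different levels of the construction; each individual verification is a routine diagram chase.
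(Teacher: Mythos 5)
Your route is essentially the paper's intended one — the paper's single-sentence proof refers you to Lemma~\ref{lem:unicofindvip} via the pattern of Lemma~\ref{lem:unicofvip}, and that is exactly what you execute. You differ in two respects worth noting. First, the paper's proof of Lemma~\ref{lem:unicofvip} implicitly treats the ind-extension $\ind(V_i')$ as a functor on $(\ind\mcA)_{F'}$, i.e.\ it silently identifies $\ind(\mcA\otimes_FF')$ with $(\ind\mcA)_{F'}$; you sidestep this by working with $W_i = V_i'\circ t$, passing through Lemma~\ref{lem:VgivesindVp} and Lemma~\ref{lem:indVpgivesVp}, and then conjugating by the comparison isomorphisms $\gamma_i$ obtained from Lemma~\ref{lem:unicofvip}. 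This is longer but slightly more scrupulous. Second, your uniqueness argument is a direct presentation argument inside $\mcA\otimes_FF'$ rather than a pass through Lemma~\ref{lem:unicofindvip}'s uniqueness and restriction; both are valid.

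One small gap: in the mono/epi/iso chain $\beta \leftrightsquigarrow \ind\beta \leftrightsquigarrow \ind\beta' \leftrightsquigarrow \ind\beta'|_{\mcA\otimes_FF'} \leftrightsquigarrow \beta'$, you never justify the ``if'' half of the third link, i.e.\ that $\ind\beta'$ being mono (resp.\ epi, iso) follows from $\ind\beta'|_{\mcA\otimes_FF'}$ being so. Either supply it (every object of $(\ind\mcA)_{F'}$ is a filtered union of its finite-length $F'$-submodules, and the $\ind W_i'$ are continuous, so $\ind\beta'_{\bX}$ is a filtered colimit of the $\ind\beta'_{\bX_j}$ with $\bX_j\in\mcA\otimes_FF'$), or simply observe that this direction is not needed: the return implication $\beta'$ mono (resp.\ epi, iso) $\Rightarrow \beta$ mono (resp.\ epi, iso) is immediate from $\beta_X=\beta'_{t(X)}$, so the one-way chain from $\beta$ to $\beta'$ together with this restriction closes the cycle and yields all equivalences.
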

\begin{proof}
This may be deduced from Lemma \ref{lem:unicofindvip}
as in the proof of Lemma \ref{lem:unicofvip}.
\end{proof}

\subsection{Permanence of Semisimplicity on Objects}\label{ss:permofss}

Let $F$ be  a field. Recall that an exact functor
between abelian categories is semisimple on objects if
it maps semisimple objects to semisimple objects. Recall
also the notion of separability as given in Definition \ref{dfn:sepalgebra}.


\begin{prop}\label{prop:FpFsepthentsemsimple}
Let $\mcA$ be an $F$-finite $F$-linear abelian category. Assume that $F'/F$
is a separable field extension. Then $t:\:\mcA\to\mcA\otimes_FF'$
is semisimple on objects.
\end{prop}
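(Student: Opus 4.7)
The plan is to reduce to the case of a simple object and then invoke Proposition \ref{prop:ridealsubob} together with the separability hypothesis to conclude via semisimplicity of the associated endomorphism algebra.

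First, since $t$ is additive and any semisimple object of $\mcA$ is a finite direct sum of simple objects (using that $\mcA$ is finite), it suffices to show that $t(S)$ is semisimple for every simple object $S$ of $\mcA$. So fix such an $S$ and set $E:=\End_\mcA(S)$ and $E':=F'\otimes_FE$. Since $S$ is simple, $E$ is a skew field, and since $\mcA$ is $F$-finite, $E$ is finite-dimensional over $F$. In particular $E$ is a semisimple $F$-algebra.

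Next, by Proposition \ref{prop:separablealgebras}(a), the separability of $F'/F$ implies that $E'$ is a semisimple $F'$-algebra. Therefore $E'$ decomposes as a right $E'$-module into a finite direct sum of simple right ideals, $E'=\bigoplus_{i=1}^n I_i$, and every right ideal of $E'$ admits a complement.

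Finally, by Proposition \ref{prop:ridealsubob} the functor $(-)\otimes_{E'}t(S)$ provides an inclusion-preserving bijection between the lattice of right ideals of $E'$ and the lattice of subobjects of $t(S)$. Since this functor is additive and sends $E'$ to $t(S)$, the decomposition $E'=\bigoplus_i I_i$ induces a decomposition $t(S)=\bigoplus_i (I_i\otimes_{E'}t(S))$; and because simple right ideals correspond to subobjects with no proper nonzero subobject, each summand $I_i\otimes_{E'}t(S)$ is simple in $\mcA\otimes_FF'$. Hence $t(S)$ is semisimple, completing the proof.

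The only substantive point is the reduction from ``$t$ preserves semisimplicity'' to ``$E'$ is semisimple'', which is immediate from the lattice-theoretic content of Proposition \ref{prop:ridealsubob}; everything else is a direct application of the separability input recorded in Proposition \ref{prop:separablealgebras}(a). There is no real obstacle here.
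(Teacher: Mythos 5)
Your proof is correct and follows essentially the same route as the paper: reduce to a simple object $S$ via additivity of $t$, note $E=\End(S)$ is a semisimple (skew field) $F$-algebra, invoke Proposition \ref{prop:separablealgebras}(a) to get semisimplicity of $E'=F'\otimes_FE$, and transfer this through Proposition \ref{prop:ridealsubob}. The paper phrases the last step more tersely — semisimplicity of $t(S)$ is equivalent to semisimplicity of $E'$ because it is a purely lattice-theoretic condition (every subobject has a complement) and Proposition \ref{prop:ridealsubob} gives an inclusion-preserving bijection of lattices — whereas you spell out the direct-sum decomposition, but the content is the same.
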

\begin{proof}
Let $X$ be a semisimple object of $\mcA$, and set $E:=\End_{\mcA}(X)$.
It is a semisimple finite-dimensional $F$-algebra by assumption. We must show that $t(X)$
is semisimple, and may assume that $X$ is simple since $t$ is additive.
By Proposition \ref{prop:ridealsubob}, $t(X)$ is semisimple if and only
if the $F'$-algebra $F'\otimes_FE$ is semisimple. And $F'\otimes_FE$
is semisimple by Proposition \ref{prop:separablealgebras}(a)
since $E$ is semisimple and $F'/F$ is separable .
\end{proof}

Even if $F'/F$ is not separable, $t(X)$ may be semisimple:


\begin{prop}\label{prop:semisimpllll}
Let $\mcA$ be an $F$-finite $F$-linear abelian category. Let $X$ be
a semisimple object of $\mcA$ for which $\End(X)$ is a separable
$F$-algebra. Then $t(X)$ is semisimple.
\end{prop}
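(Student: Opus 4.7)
The plan is to mimic the proof of Proposition \ref{prop:FpFsepthentsemsimple}, with additional bookkeeping to extract the separability of endomorphism algebras of simple summands of $X$ from the separability of $\End(X)$.

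First I would reduce to the case that $X$ is simple. Since $\mcA$ is $F$-finite and $X$ is semisimple, we may write $X\isom\bigoplus_{i=1}^rS_i^{n_i}$ with pairwise non-isomorphic simple objects $S_i$ of $\mcA$ and positive integers $n_i$. Because $t$ is additive, $t(X)\isom\bigoplus_it(S_i)^{n_i}$, and semisimplicity of direct sums is equivalent to semisimplicity of the summands. So it suffices to show that each $t(S_i)$ is semisimple. For this I need to verify that the assumption on $\End(X)$ passes to each $\End(S_i)$.

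Next, decompose $\End(X)\isom\prod_{i=1}^rM_{n_i}(D_i)$ as $F$-algebras, where $D_i:=\End(S_i)$ is a skew field because $S_i$ is simple. The simple factors of $\End(X)$ are precisely the $M_{n_i}(D_i)$, each of which has center equal to $Z(D_i)$. The hypothesis that $\End(X)$ is a separable $F$-algebra means, by Definition \ref{dfn:sepalgebra}, that each $Z(D_i)/F$ is a separable field extension. Since $D_i$ is itself a semisimple $F$-algebra with unique simple factor $D_i$ having center $Z(D_i)$, this says exactly that $D_i=\End(S_i)$ is a separable $F$-algebra in the sense of Definition \ref{dfn:sepalgebra}.

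Finally, I apply Proposition \ref{prop:separablealgebras}(b) to conclude that $F'\otimes_FD_i$ is a semisimple $F'$-algebra for each $i$. By Proposition \ref{prop:ridealsubob}, the subobjects of $t(S_i)$ in $\mcA\otimes_FF'$ correspond bijectively to the right ideals of $F'\otimes_FD_i$, which is a semisimple $F'$-algebra, hence a semisimple right module over itself. Therefore $t(S_i)$ is semisimple, and the previous reduction gives that $t(X)$ is semisimple. The only real content beyond the previous proposition is the Morita-style observation that separability of $\End(X)$ transfers to separability of each $\End(S_i)$; once that is established, the argument is parallel.
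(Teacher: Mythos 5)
Your proof is correct and follows the same route as the paper's: reduce to a simple summand $S_i$, show $F'\otimes_F\End(S_i)$ is semisimple via Proposition \ref{prop:separablealgebras}(b), and then invoke Proposition \ref{prop:ridealsubob} to transfer semisimplicity to $t(S_i)$. The paper's own proof compresses the reduction to a single ``mutatis mutandis'' reference to Proposition \ref{prop:FpFsepthentsemsimple}; you instead make explicit that separability of $\End(X)\isom\prod_iM_{n_i}(D_i)$ descends to each $D_i=\End(S_i)$ because $Z(M_{n_i}(D_i))\isom Z(D_i)$, which is precisely the bookkeeping the paper leaves to the reader.
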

\begin{proof}
The algebra $F'\otimes_FE$ is semisimple by Proposition \ref{prop:separablealgebras}(b)
since $E$ is separable and semisimple. So
we may repeat the proof of Proposition \ref{prop:FpFsepthentsemsimple},
mutatis mutandis.
\end{proof}

\begin{prop}\label{prop:permofsemisimpli}
Let $\mcA$ be an $F$-finite $F$-linear abelian category, $\mcB$ an $F'$-linear abelian
category, $V:\:\mcA\to\mcB$ a right exact $F$-linear functor and
$V':\:\mcA\otimes_FF'\to\mcB$ the induced right exact $F'$-linear functor.
Assume that $F'/F$ is a separable field extension.
Then $V$ is semisimple on objects if and only if $V'$ is semisimple on objects.
\end{prop}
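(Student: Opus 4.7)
My plan is to prove the two implications separately. The asymmetry in the hypotheses suggests that separability will be used in exactly one direction, while the other direction is essentially a formal consequence of right exactness plus the presentation theorem \ref{prop:presandcopres1}.

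\textbf{Direction 1 ($V'$ preserves semisimplicity $\Rightarrow$ $V$ does).} I would start from a semisimple object $X$ of $\mcA$. Since $F'/F$ is separable, Proposition \ref{prop:FpFsepthentsemsimple} yields that $t(X)$ is a semisimple object of $\mcA\otimes_F F'$. The hypothesis on $V'$ then gives that $V'(t(X))$ is semisimple in $\mcB$. Finally, the isomorphism of functors $\alpha : V \Rightarrow V'\circ t$ supplied by Theorem \ref{thm:univpropscalex}(a) identifies $V(X) \isom V'(t(X))$, so $V(X)$ is semisimple. Separability enters this direction only through Proposition \ref{prop:FpFsepthentsemsimple}.

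\textbf{Direction 2 ($V$ preserves semisimplicity $\Rightarrow$ $V'$ does).} Let $\bX$ be a semisimple object of $\mcA\otimes_F F'$. Since $\mcA\otimes_F F'$ is $F'$-finite (Theorem \ref{thm:scalarextwoutup}), $\bX$ decomposes as a finite direct sum of simples, and since $V'$ is additive, it suffices to treat the case that $\bX$ is simple. Here I would invoke the key structural input from Proposition \ref{prop:presandcopres1}: there exist a simple object $S$ of $\mcA$ and an epimorphism $t(S) \twoheadrightarrow \bX$. Right exactness of $V'$ then gives an epimorphism $V'(t(S)) \twoheadrightarrow V'(\bX)$. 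Using $\alpha$ again to identify $V'(t(S)) \isom V(S)$, and noting that $V(S)$ is semisimple by hypothesis (as $S$ is simple and thus semisimple), $V'(\bX)$ is exhibited as a quotient of a semisimple object, hence is itself semisimple. Notably, this direction does not use separability at all.

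I do not foresee a genuine obstacle. The only point requiring a brief justification is that a quotient of a semisimple object in any abelian category is semisimple, which is standard. The conceptual crux is simply to combine the two earlier ingredients -- semisimplicity of $t$ on objects under separability (Proposition \ref{prop:FpFsepthentsemsimple}) for the forward direction, and the existence of a surjection from $t(S)$ with $S$ simple (Proposition \ref{prop:presandcopres1}) together with right exactness of $V'$ for the reverse direction.
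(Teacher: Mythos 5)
Your proof is correct and follows essentially the same route as the paper: the forward direction composes $V'$ with $t$ (using Proposition \ref{prop:FpFsepthentsemsimple} for the separability input), and the reverse direction reduces to a simple $\bX$, invokes Proposition \ref{prop:presandcopres1} to produce an epimorphism $t(S)\twoheadrightarrow\bX$ with $S$ simple, and applies right exactness of $V'$ to conclude $V'(\bX)$ is a quotient of the semisimple $V(S)$.
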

\begin{proof}
If $V'$ is semisimple on objects, then so is $V=V'\circ t$, a composition
of such functors by Proposition \ref{prop:FpFsepthentsemsimple}.
Conversely, if $V$ is semisimple on objects, let $\bX$ be a semisimple
object of $\mcA\otimes_FF'$. We must show that $V'(\bX)$ is semisimple,
and may assume that $\bX$ is simple, since $V'$ is additive.
By Proposition \ref{prop:presandcopres1}(a) there exists a simple object
$S$ of $\mcA$ such that $\bX$ is a quotient of $t(S)$. Since $V'$
is right exact, this implies that $V'(\bX)$ is a quotient of $V'(t(S))=V(S)$,
which is semisimple by assumption. Therefore, $V'(\bX)$ itself is semisimple.
\end{proof}

\section{Tensor Categories}

\subsection{Scalar Extension of Tensor Categories}\label{ss:scalextoftenscats}
Let $F'/F$ be a field extension, and consider an $F$-finite $F$-linear abelian category $\mcA$
with associated scalar extension functor $t:\:\mcA\to\mcA\otimes_FF'$.

\begin{dfn}
An \emph{$F$-multilinear endofunctor of $\mcA$} is a functor
\[M:\:\mcA^{\times n}\To\mcA\]
which is $F$-linear in each each argument, for some integer $n\ge 1$.
\end{dfn}

\begin{prop}\label{prop:univpropscalexbilin}
Let $\mcA$ be an $F$-finite $F$-linear abelian category, and $n\ge 1$ an integer.
\begin{enumerate}
  \item Let $M:\:\mcA^{\times n}\to\mcA$ be a right exact $F$-multilinear functor. Then there exists a
  			right exact $F'$-multilinear functor $M':\:(\mcA\otimes_FF')^{\times n}\to\mcA\otimes_FF'$
  			together with an isomorphism $\alpha:\:t\circ M\Rightarrow M'\circ (t^{\times n})$ of functors.
  \item Let $M_1,M_2:\:\mcA^{\times n}\to\mcA$ be two right exact $F$-multilinear functors, and let $(M_1',\alpha_1)$, $(M_2',\alpha_2)$
  			be extensions as in (a) of $M_1$, $M_2$ respectively. Then, for every homomorphism of functors $\beta:\:M_1\Rightarrow M_2$
  			there exists a unique homomorphism of functors $\beta':\:M_1'\Rightarrow M_2'$ such that
  			$t\beta\circ\alpha_1=\alpha_2\circ \beta'_{t^{\times n}}$ in the sense that for every $n$-tuple of objects
  			$(X_1,\ldots X_n)\in\mcA^{\times n}$ the following diagram commutes:
  			\[\xymatrix{
  			M_1'\big(t(X_1),\ldots,t(X_n)) \ar[rr]^{\phantom{mm}\alpha_{1,(X_1,\ldots,X_n)}} \ar[d]_{\beta'_{(t X_1,\ldots,t X_n)}}
  			&& t(M_1(X_1,\ldots,X_n))\ar[d]^{\id\otimes\beta_{(X_1,\ldots,X_n)}}\\
  			M_2'(t(X_1),\ldots,t(X_n)) \ar[rr]^{\phantom{mm}\alpha_{2,(X_1,\ldots,X_n)}}
  			&& t(M_1(X_1,\ldots,X_n))
  			}\]
\end{enumerate}
\end{prop}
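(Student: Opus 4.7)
The natural approach is induction on $n$, reducing the multilinear case to iterated applications of the linear universal property in Theorem \ref{thm:univpropscalex} together with the functoriality statement Proposition \ref{propunivpropscalexfuncinV}.

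For $n=1$, part (a) follows by applying Theorem \ref{thm:univpropscalex} to the right exact $F$-linear functor $t\circ M:\:\mcA\to\mcA\otimes_FF'$, producing $M':\:\mcA\otimes_FF'\to\mcA\otimes_FF'$ together with $\alpha:\:t\circ M\Rightarrow M'\circ t$. Part (b) for $n=1$ is Proposition \ref{propunivpropscalexfuncinV} applied to the homomorphism $t\circ\beta:\:t\circ M_1\Rightarrow t\circ M_2$, after transport along $\alpha_1$ and $\alpha_2$.

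For the inductive step, assume both (a) and (b) hold for $n-1$. Given $M:\:\mcA^{\times n}\to\mcA$ right exact and $F$-multilinear, and an object $Y\in\mcA$, the functor $M(-,\ldots,-,Y):\:\mcA^{\times(n-1)}\to\mcA$ obtained by fixing the last argument is right exact and $F$-multilinear, so by induction it admits a right exact $F'$-multilinear extension $M'_Y:\:(\mcA\otimes_FF')^{\times(n-1)}\to\mcA\otimes_FF'$ together with an isomorphism $\alpha_Y$. For a morphism $f:\:Y\to Y'$ in $\mcA$ the induced natural transformation $M(-,f)$ extends uniquely to $M'_f:\:M'_Y\Rightarrow M'_{Y'}$ by the induction hypothesis (b), so $Y\mapsto M'_Y$ becomes a functor. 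In particular, for fixed $\bX_1,\ldots,\bX_{n-1}\in\mcA\otimes_FF'$, the assignment $Y\mapsto M'_Y(\bX_1,\ldots,\bX_{n-1})$ is a right exact $F$-linear functor $\mcA\to\mcA\otimes_FF'$, which by Theorem \ref{thm:univpropscalex} extends to an $F'$-linear functor in the last argument. Proposition \ref{propunivpropscalexfuncinV} applied to the arrows of $\mcA\otimes_FF'$ in each of the first $n-1$ arguments ensures that this extension is still functorial there, yielding the desired $M':\:(\mcA\otimes_FF')^{\times n}\to\mcA\otimes_FF'$. Right exactness in each of the $n$ arguments and the isomorphism $\alpha$ are built into the construction.

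Part (b) in the inductive step proceeds analogously: fix the last argument $Y$, obtain from $\beta$ a homomorphism of functors $M_1(-,Y)\Rightarrow M_2(-,Y)$, extend uniquely by the induction hypothesis to $\beta'_Y:\:M'_{1,Y}\Rightarrow M'_{2,Y}$, then vary $Y$ and apply Proposition \ref{propunivpropscalexfuncinV} in the last argument to extend uniquely to all of $(\mcA\otimes_FF')^{\times n}$. The main obstacle is purely bookkeeping: one must check that the extensions obtained by treating the $n$ variables in different orders are canonically identified, which is where the full force of the unicity clauses of Theorem \ref{thm:univpropscalex} and Proposition \ref{propunivpropscalexfuncinV} is used. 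No new geometric input is required beyond the one-variable theory already established.
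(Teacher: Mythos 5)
Your proposal is correct and follows essentially the same route as the paper's own proof, which constructs $M'$ one variable at a time using Theorem \ref{thm:univpropscalex} for each new variable and Proposition \ref{propunivpropscalexfuncinV} to make the previously-extended variables functorial; the paper only sketches the case $n=2$ and explicitly declines to write out the bookkeeping, so phrasing the general case as induction on $n$ is the natural rendering of the same argument. One small caveat you gloss over, just as the paper does: right exactness of $M'$ in the $F'$-extended variables (as opposed to the freshly extended last one) does not fall out of Theorem \ref{thm:univpropscalex} directly, but must be checked on objects of the form $t(X_i)$ and then propagated to general objects via presentations by such; this is routine but worth noting since you assert it is ``built into the construction.''
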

\begin{proof}
This is one of the proofs in mathematics which does not become much clearer by writing it
down in detail. The case $n=1$ follows from Theorem \ref{thm:univpropscalex}
and Proposition \ref{propunivpropscalexfuncinV} applied to $V:=t\circ M$.
We settle for a sketch of the construction of $M'$ in the case $n=2$.
We set $\otimes:=M$ and will denote the desired extension $M'$ by $\otimes'$.
Let us abbreviate notation by setting $\mcA':=\mcA\otimes_FF'$.

For every $Y\in\mcA$, let
\[-\otimes't(Y):=(t\circ(-\otimes Y))':\:\mcA'\to\mcA'\]
denote the scalar extension of $t\circ(-\otimes Y)$ as in Theorem \ref{thm:univpropscalex}(a).
It is an $F'$-linear right exact functor. It is also functorial in $Y$, since a homomorphism
$f:\:Y_1\to Y_2$ induces a homomorphism of functors $t\circ(-\otimes Y_1)\Rightarrow t\circ(-\otimes Y_2)$
given for $X\in\mcA$ by $\id\otimes f:\:X\otimes Y_1\to X\otimes Y_2$. By Proposition
\ref{propunivpropscalexfuncinV} this induces a unique homomorphism of functors $-\otimes't(Y_1)\Rightarrow-\otimes't(Y_2)$.
Therefore, we obtain a right exact functor
\[-\otimes't(-):\:\mcA'\times\mcA\to\mcA',\quad (\bX,Y)\mapsto \bX\otimes't(Y)\]
which is $F'$-linear in the first variable and $F$-linear in the second.

For every $\bX\in\mcA'$, let
\[\bX\otimes'(-):=(\bX\otimes'-)':\:\mcA'\to\mcA'\]
denote the scalar extension of $\bX\otimes'-$ as in Theorem \ref{thm:univpropscalex}(a).
It is an $F'$-linear right exact functor. By similar reasoning as before, it is
functorial in $\bX$. Therefore, we obtain a right exact $F$-bilinear functor
\[(-)\otimes'(-):\:\mcA'\times\mcA'\to\mcA',\quad (\bX,\bY)\mapsto \bX\otimes'\bY.\]
It fulfills what is required in item (a).
\end{proof}

For an introduction to the theory of tensor categories,
we refer to \cite{DeM82} and \cite{Del90}. We will repeat only the
definitions to fix notation. 

\begin{dfn}
\begin{enumerate}
\item
An \emph{abelian tensor category} is an abelian category
$\mcA$ together with a right exact biadditive functor $\otimes:\:\mcT\times\mcT\to\mcT$,
its \emph{tensor product}, which is assumed to be equipped with sufficiently many (associativity, commutativity and unity) constraints
such that the tensor product of an unordered finite set of objects
is well-defined. In particular, there exists a unit object $\bbu$.
One tends to suppress mention of the constraints.
\item An \emph{abelian tensor category over $F$} is an abelian tensor
category together with a ring isomorphism $F\to\End(\bbu)$.
Using this isomorphism and the constraints, $\mcT$ becomes $F$-linear
and $\otimes$ is $F$-bilinear.
\item A \emph{tensor functor} is a functor $V:\:\mcS\to\mcT$
between two abelian tensor categories, which is assumed to be equipped with tensor constraints,
that is, canonical isomorphisms $V(X)\otimes V(Y)\isom V(X\otimes Y)$
compatible with the constraints of $\mcS$ and $\mcT$.
\item A \emph{morphism of tensor functors} $V,W:\:\mcS\to\mcT$
is a natural transformation $\eta:\:V\Rightarrow W$, which is assumed to be
compatible with the tensor constraints of $V$ and $W$. We let
$\Hom^\otimes(V,W)$ denote the set of morphisms of tensor functors
$V\Rightarrow W$ and let $\Aut^\otimes(V)$ denote the group of automorphisms
of $V$ as tensor functor.
\end{enumerate}
\end{dfn}

Given an $F$-finite abelian tensor categoy $(\mcT,\otimes)$ over $F$,
Proposition \ref{prop:univpropscalexbilin}
provides a natural candidate for a tensor product $\otimes'$
on $\mcT\otimes_FF'$. The following proposition demonstrates
that our instincts are correct.

\begin{thm}\label{thm:scalextoftensorcats}
Let $(\mcT,\otimes)$ be an $F$-finite abelian tensor category over $F$.
Let $F'/F$ be a field extension.
\begin{enumerate}
  \item $(\mcT\otimes_FF',\otimes')$ is an abelian tensor category over $F'$.
  \item $t:\:\mcT\to\mcT\otimes_FF'$ is a tensor functor.
\end{enumerate}
\end{thm}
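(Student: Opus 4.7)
The plan is to transport all the categorical structure from $(\mcT,\otimes)$ to $(\mcT\otimes_FF',\otimes')$ by invoking the uniqueness parts of Proposition~\ref{prop:univpropscalexbilin} and Proposition~\ref{propunivpropscalexfuncinV}. The functor $\otimes'$ has already been produced by Proposition~\ref{prop:univpropscalexbilin}(a), so the real work is the constraints, the unit, and the tensor constraints of $t$.

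First I would set $\bbu' := t(\bbu)$ and note that, because $t$ is $F'/F$-fully faithful (Proposition~\ref{lem:tisFpFff}), we have $\End(\bbu') \cong F'\otimes_F\End(\bbu) \cong F'\otimes_F F = F'$, which gives the required ring isomorphism $F'\to\End(\bbu')$. Next, for each of the constraints of $\mcT$ I would produce its counterpart on $\mcT\otimes_FF'$ as follows. The associativity constraint is an isomorphism of right-exact $F$-trilinear functors $a:(-\otimes-)\otimes-\Rightarrow -\otimes(-\otimes-)$. Both sides extend uniquely (up to unique isomorphism) to right-exact $F'$-trilinear functors on $(\mcT\otimes_FF')^{\times 3}$ by iterated application of Proposition~\ref{prop:univpropscalexbilin}(a), and by the uniqueness statement (Proposition~\ref{prop:univpropscalexbilin}(b), applied inductively in each variable) the isomorphism $t\cdot a$ extends to a unique isomorphism $a'$ between the two extensions. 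The commutativity constraint and the left/right unit constraints are handled in exactly the same way.

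The main obstacle is verifying the coherence axioms (pentagon, hexagon, triangle, and compatibility of unit with commutativity) for the extended constraints. The strategy is to avoid direct calculation and instead invoke uniqueness. For the pentagon, for instance, both sides of the pentagon diagram for $a'$ are morphisms of functors between the same pair of right-exact $F'$-quadrilinear functors on $(\mcT\otimes_FF')^{\times 4}$. Precomposing with $t^{\times 4}$ gives morphisms between right-exact $F$-quadrilinear functors on $\mcT^{\times 4}$ which are compatible with the $\alpha$'s defining the extensions, and these restricted morphisms agree because the original pentagon holds for $a$ in $\mcT$. By Proposition~\ref{prop:univpropscalexbilin}(b), applied variable-by-variable, a morphism between two such extended multilinear functors is determined by its restriction to objects in the image of $t$, so both sides of the pentagon for $a'$ coincide. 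The hexagon and triangle axioms are obtained identically.

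For (b), the tensor constraint of $t$ is precisely the isomorphism $\alpha : t\circ\otimes \Rightarrow \otimes'\circ(t\times t)$ furnished by Proposition~\ref{prop:univpropscalexbilin}(a) when applied to $M=\otimes$. Its compatibility with associativity, commutativity, and the unit constraints is another instance of the same uniqueness principle: the two composites one must compare are morphisms between right-exact $F$-multilinear functors on $\mcT^{\times k}$, and by construction they are both realised as restrictions of uniquely-determined extensions that automatically match, the matching being guaranteed by the choice of $a'$, the commutativity constraint $c'$, and the unit isomorphism. Hence $t$ is a tensor functor, completing the proof.
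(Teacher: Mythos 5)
Your strategy coincides with the paper's: produce $\otimes'$ via Proposition~\ref{prop:univpropscalexbilin}(a), extend each constraint by the uniqueness statement of the same proposition, and settle every coherence axiom by observing that the two composite morphisms of multilinear functors involved agree after restriction along $t^{\times n}$, hence agree by uniqueness. The one place you genuinely diverge is the unit. The paper works with the Saavedra/Deligne--Milne characterisation of a unit object: an object $\bbu$ equipped with an isomorphism $\bbu\to\bbu\otimes\bbu$ such that $\bbu\otimes(-)$ is an equivalence of categories; it then proves that $t(\bbu)\otimes'(-)$ is an equivalence by scalar-extending a quasi-inverse $V$ of $\bbu\otimes(-)$ and checking via Proposition~\ref{prop:univpropscalexbilin}(b) that $(t\circ V)'$ is quasi-inverse to $t(\bbu)\otimes'(-)$. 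You instead treat the left and right unit constraints (and the triangle axiom) as additional pieces of constraint data to transport, which is compatible with the paper's loosely worded definition of abelian tensor category; since Proposition~\ref{propunivpropscalexfuncinV} preserves the property of being an isomorphism, the extended unit constraint $t(\bbu)\otimes'(-)\cong\mathrm{id}$ already exhibits $t(\bbu)\otimes'(-)$ as an equivalence, so your route to the unit is marginally more economical. Both treatments obtain $\End(t(\bbu))\cong F'$ from $F'/F$-full faithfulness of $t$. Overall the proposal is sound and, apart from this local choice, mirrors the paper's argument.
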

\begin{proof}
By assumption, $\otimes:\:\mcT\times\mcT\to\mcT$ is a right exact $F$-bilinear
functor and comes equipped with an associativity constraint $\phi$, commutativity
constraint $\psi$, unit object $\bbu$ and isomorphism $F\to\End(\bbu)$.
Set $\mcT':=\mcT\otimes_FF'$.
By Proposition \ref{prop:univpropscalexbilin}, the induced functor $\otimes':\:\mcT'\times\mcT'\to\mcT'$
is right exact and $F'$-bilinear.
The associativity constraint $\phi:\:\otimes\circ(\id\times\otimes)\Rightarrow\otimes\circ(\otimes\times\id)$
has a unique extension to an isomorphism of functors
$\phi':\:\otimes'\circ(\id\times\otimes')\Rightarrow\otimes'\circ(\otimes'\times\id)$
by Proposition \ref{prop:univpropscalexbilin} for $n=3$, and the commutativity
constraint $\psi:\:\otimes\Rightarrow\otimes\circ s$ has a unique extension to an
isomorphism of functors $\psi':\:\otimes'\Rightarrow\otimes'\circ s$ by Proposition \ref{prop:univpropscalexbilin} for $n=2$,
where $s$ denotes the ``switch'' functor $\mcC\times\mcC\to\mcC\times\mcC,\:(X,Y)\mapsto (Y,X)$ for any
category $\mcC$.

It remains to check that three relations
hold among $\phi'$ and $\psi'$ (namely, $\psi'\circ\psi'=\id$, the Pentagon Axiom and the Hexagon Axiom), and
that there exists a unit object $\bbu'\in\mcT'$ for which $F'\to\End_{\mcT'}(\bbu')$ is an isomorphism.

Each of these three relations state
that certain natural transformations (constructed using $\phi'$ and $\psi'$) of
certain functors $\mcT^{\prime\times n}\to\mcT'$ (constructed using $\otimes'$)
are equal. The first states that $\psi'_{\bY,\bX}\circ\psi'_{\bX,\bY}=\id_{\bX\otimes'\bY}$
for all $\bX,\bY\in\mcT'$.
The Pentagon Axiom states that $\phi'\circ\phi'=(\phi'\otimes'\id)\circ\phi'\circ(\id\otimes'\phi')$
in the sense that for every quadruple $(\bX,\bY,\bZ,\bT)$ of objects of $\mcT'$
the following diagram commutes:
\[\def\objectstyle{\scriptstyle}
\def\labelstyle{\scriptstyle}
\xymatrix{
\bX\otimes'(\bY\otimes'(\bZ\otimes'\bT)) \ar[d]\ar[r] & (\bX\otimes'\bY)\otimes'(\bZ\otimes'\bT) \ar[r] & ((\bX\otimes'\bY)\otimes'\bZ)\otimes'\bT\\
\bX\otimes'((\bY\otimes'\bZ)\otimes'\bT)\ar[rr] && (\bX\otimes'(\bY\otimes'\bZ))\otimes'\bT \ar[u]
}\]
The Hexagon Axiom states that $\phi'\circ\psi'\circ\phi'=(\psi'\otimes\id)\circ\phi'\circ(\id\otimes'\psi')$
in the sense that for every triple $(\bX,\bY,\bZ)$ of objects of $\mcT'$
the following diagram commutes:
\[\def\objectstyle{\scriptstyle}
\def\labelstyle{\scriptstyle}
\xymatrix{
\bX\otimes'(\bY\otimes'\bZ) \ar[d]\ar[r] & (\bX\otimes'\bY)\otimes'\bZ \ar[r] & \bZ\otimes'(\bX\otimes'\bY)\ar[d]\\
\bX\otimes'(\bZ\otimes'\bY) \ar[r]       & (\bX\otimes'\bZ)\otimes'\bY \ar[r] & (\bZ\otimes'\bX)\otimes'\bY
}\]
 
In all cases, Proposition \ref{prop:univpropscalexbilin}(b) and the assumption that $\mcT$ is a tensor
category show that the stated relations hold. Let us prove the first relation $\psi'\circ\psi'=\id$
as an example.
Now $\psi'\circ\psi'$ is a homomorphism of functors $\otimes'\to\otimes'$.
Its restriction to $\otimes$ is equal to $\psi\circ\psi$ by definition, and
is equal to the identity endomorphism of $\otimes$, since $\psi'$ extends $\psi$ and $\mcT$ is a tensor category.
So $\psi'\circ\psi'$ is an extension of the identity endomorphism of $\otimes$. Since the identity endomorphism of $\otimes'$ is
another extension of the identity endomorphism of $\otimes$, Proposition \ref{prop:univpropscalexbilin}(b)
shows that $\psi'\circ\psi'$ and the identity endomorphism of $\otimes'$ coincide!
The proofs that the Pentagon and Hexagon axioms hold are similar, if somewhat more involved notationally.

It remains to show that there exists a unit object of $(\mcT',\otimes')$ with endomorphism ring $F'$,
and we claim that $t(\bbu)$ is one for every unit object $\bbu$ of $(\mcT,\otimes)$. To say that $\bbu$ is a unit object
means that there exists an isomorphism $u:\bbu\to\bbu\otimes\bbu$ and that $\bbu\otimes(-)$ is an equivalence of categories
$\mcT\to\mcT$.

Now $t(u):\:t(\bbu)\to t(\bbu\otimes\bbu)\isom t(\bbu)\otimes't(\bbu)$ is an isomorphism since $t$
is a functor.  Let $V$ be a quasi-inverse of the restriction $\bbu\otimes(-)$ of the functor $t(\bbu)\otimes'(-)$.
Then $(t\circ V)'$, the scalar extension of $t\circ V$, is a quasi-inverse of the functor $t(\bbu)\otimes'(-)$,
this may again be proved using Proposition \ref{prop:univpropscalexbilin}(b).
Finally, $F'\to\End_{\mcT'}(t(\bbu))$ is an isomorphism since $\bbu$ has endomorphism ring $F$ and $t$ is
$F'/F$-fully faithful.

(b): This statement is true by construction, since we have given $\mcT'$ a structure of tensor category
extending that of $\mcT$.
\end{proof}

\begin{prop}\label{prop:tensorgivestensor}
Let $\mcT$ be an $F$-finite abelian tensor category over $F$, $\mcT'$ an $F'$-linear abelian tensor category,
$V:\:\mcT\to\mcT'$ an $F$-linear right exact tensor functor. Then the
$F'$-linear functor $V':\:\mcT\otimes_FF'\to\mcT'$ induced by Theorem \ref{thm:univpropscalex} is a tensor functor.
\end{prop}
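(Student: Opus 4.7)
The plan is to lift the tensor constraints of $V$ to tensor constraints for $V'$ via the universal properties already established, and then verify all coherence axioms using uniqueness.

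First I would form the two right exact $F'$-bilinear functors
\[F_1,F_2:\:(\mcT\otimes_FF')^{\times 2}\To\mcT',\]
defined by $F_1(\bX,\bY):=V'(\bX)\otimes V'(\bY)$ and $F_2(\bX,\bY):=V'(\bX\otimes'\bY)$. Both are right exact in each variable (using right exactness of $V'$, of $\otimes'$ on $\mcT\otimes_FF'$, and of $\otimes$ on $\mcT'$) and $F'$-bilinear. Using the isomorphism $\alpha:\:V\Rightarrow V'\circ t$ from Theorem \ref{thm:univpropscalex}, the tensor constraint of $t$ from Theorem \ref{thm:scalextoftensorcats}(b), and the tensor constraint of $V$, one obtains a natural isomorphism between the restrictions $F_1\circ(t\times t)$ and $F_2\circ(t\times t)$ as bifunctors $\mcT^{\times 2}\to\mcT'$.

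Next, I would lift this natural isomorphism to an isomorphism $c':\:F_1\Rightarrow F_2$ by iterating Proposition \ref{propunivpropscalexfuncinV} in each variable, in the spirit of the proof sketch of Proposition \ref{prop:univpropscalexbilin}: fix one variable at a time, apply the uniqueness clause, and then check that the resulting collection is natural in the other variable by another appeal to the same uniqueness. The component $c'_{\bX,\bY}:\:V'(\bX)\otimes V'(\bY)\isom V'(\bX\otimes'\bY)$ is the desired tensor constraint.

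For the unit, Theorem \ref{thm:scalextoftensorcats} identifies $t(\bbu)$ as a unit object of $\mcT\otimes_FF'$, and since $V$ is a tensor functor, $V'(t(\bbu))\isom V(\bbu)\isom\bbu_{\mcT'}$ canonically via $\alpha$. It remains to verify that $c'$ is compatible with the associativity, commutativity and unit constraints of $(\mcT\otimes_FF',\otimes')$ and of $\mcT'$. In each case, the desired coherence is an equality of natural transformations between certain right exact $F'$-multilinear functors on $(\mcT\otimes_FF')^{\times n}$. When restricted along $t^{\times n}$, both sides of each such equation compute, via $\alpha$ and the constraints of $t$, to the corresponding coherence for $V$, which holds by hypothesis. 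By the uniqueness clause of Proposition \ref{prop:univpropscalexbilin}(b), any two extensions of a homomorphism of multilinear functors coincide, so both sides must therefore agree globally on $(\mcT\otimes_FF')^{\times n}$. The only real obstacle is notational bookkeeping — writing out each coherence diagram so that it visibly fits the template of \emph{two right exact multilinear functors with a natural transformation between them}; once this is done, the uniqueness principle mechanically closes every diagram.
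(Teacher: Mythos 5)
Your proposal fleshes out exactly the proof the paper indicates but suppresses ("similar to the proof of Theorem~\ref{thm:scalextoftensorcats}, using Proposition~\ref{prop:univpropscalexbilin}"): construct the tensor constraint $c'$ by extending the isomorphism between $F_1\circ(t\times t)$ and $F_2\circ(t\times t)$ supplied by the constraints of $V$ and $t$, then verify each coherence diagram by restricting along $t^{\times n}$ and invoking uniqueness of extensions. One detail worth flagging: Proposition~\ref{prop:univpropscalexbilin} as stated concerns multilinear \emph{endo}functors $\mcA^{\times n}\to\mcA$, while your $F_1,F_2$ have target $\mcT'$, not $\mcT\otimes_FF'$; you correctly note that one must iterate Proposition~\ref{propunivpropscalexfuncinV} (which allows arbitrary $F'$-linear abelian target) one variable at a time rather than cite~\ref{prop:univpropscalexbilin}(b) verbatim, and the coherence step needs the analogous multilinear variant of that uniqueness principle with target $\mcT'$, which follows by the same iteration. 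With that understood, the argument is sound and is the one the paper has in mind.
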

\begin{proof}
The proof is similar to the proof of Theorem \ref{thm:scalextoftensorcats},
using Proposition \ref{prop:univpropscalexbilin} and the precise definition of
tensor functors. We suppress it.
\end{proof}

\begin{rem}
It follows that $t:\:\mcT\to\mcT\otimes_FF'$ has a universal property with
respect to tensor categories, with right exact tensor functors replacing 
the right exact functors of Theorem \ref{thm:univpropscalex}.
\end{rem}

\subsection{The Influence of Duals}

\begin{dfn}
\begin{enumerate}
\item An object $X$ of an abelian tensor category is \emph{dualisable}
if there exists an object $X^\vee$ -- its \emph{dual} -- together
with homomorphisms $\delta:\:\bbu\to X\otimes X^\vee$ and
$\ev:\:X\otimes X^\vee\to\bbu$ such that the composite homomorphisms
$X\to X\otimes X^\vee\otimes X\to X$ and $X^\vee\to X^\vee\otimes X\otimes X^\vee\to X^\vee$
are equal to the respective identities. If $X$ is dualisable, then
so is $X^\vee$ and one has a canonical isomorphism $X\isom X^{\vee\vee}$.
\item An abelian tensor category is \emph{rigid} if all of its
objects are dualisable.
\item The \emph{dual} of a homomorphism $f:\:X\to Y$
in a rigid abelian tensor category is the unique homomorphism $f^\vee:\:Y^\vee\to X^\vee$
satisfying
\[\ev_Y\circ(\id_{Y^\vee}\otimes f)=\ev_X\circ(f^\vee\otimes\id_X):\:Y^\vee\otimes X\to\bbu.\]
\item The \emph{internal Hom} of two objects $X,Y$ of a rigid abelian tensor category
is the object $\iHom(X,Y):=X^\vee\otimes Y$.
\item A \emph{pre-Tannakian category over $F$} is an $F$-finite rigid abelian tensor category over $F$.
\item A subcategory $\mcS$ of a pre-Tannakian category $\mcT$ is a \emph{strictly full pre-Tannakian subcategory}
  if it is full and closed under direct sums, tensor products, duals and subquotients in $\mcT$.
\item Given a set $S$ of objects of a pre-Tannakian category $\mcT$, we let $\llkurv S\rrkurv_{\otimes}$
  denote the smallest strictly full pre-Tannakian subcategory of $\mcT$ containing $S$. We
  also set $\llkurv X\rrkurv_{\otimes}:=\llkurv \{X\}\rrkurv_{\otimes}$ for any object $X$ of $\mcT$.
\item A pre-Tannakian category $\mcT$ is \emph{finitely generated} if $\mcT=\llkurv X\rrkurv_{\otimes}$
  for some object $X\in\mcT$.
\end{enumerate}
\end{dfn}

\begin{prop}\label{prop:scalextofrigidtensorcats}
Let $\mcT$ be a pre-Tannakian category over $F$, and consider a field extension $F'/F$.
Then $\mcT\otimes_FF'$ is a pre-Tannakian category over $F'$.
\end{prop}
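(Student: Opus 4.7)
By Theorem \ref{thm:scalarextwoutup}, $\mcT\otimes_FF'$ is $F'$-finite, and by Theorem \ref{thm:scalextoftensorcats} the induced tensor product $\otimes'$ makes it into an abelian tensor category over $F'$ with $t$ a tensor functor. The only missing property for pre-Tannakianness is rigidity, so my plan is to exhibit a dual for every $\bX\in\mcT\otimes_FF'$.

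The first step is to observe that $t$ preserves duals on the nose: for every $X\in\mcT$ the object $t(X)$ is dualisable in $\mcT\otimes_FF'$ with dual $t(X^\vee)$, because the images under the tensor functor $t$ of the evaluation and coevaluation maps of $X$ (composed with the tensor constraints of $t$) satisfy the triangular identities automatically. A crucial consequence is that $(-)\otimes' t(X)$ is an exact endofunctor of $\mcT\otimes_FF'$ with right adjoint $(-)\otimes' t(X^\vee)$.

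For a general $\bX$, I fix a presentation $t(X_1)\xrightarrow{g} t(X_0)\to\bX\to 0$ supplied by Lemma \ref{lem:presandcopres1}. Since $t$ is $F'/F$-fully faithful by Proposition \ref{lem:tisFpFff}, $g$ may be written as $\sum_i\lambda_i\otimes t(g_i)$ with $\lambda_i\in F'$ and $g_i\in\Hom_\mcT(X_1,X_0)$, so I can form the morphism $g^\vee:=\sum_i\lambda_i\otimes t(g_i^\vee):\:t(X_0^\vee)\to t(X_1^\vee)$, using the rigidity of $\mcT$. Define $\bX^\vee:=\ker(g^\vee)$. For any $\bY$, tensoring the presentation of $\bX$ with $\bY$ stays right exact, and applying $\Hom(-,\bbu)$ together with the dualisability adjunction $\Hom(\bY\otimes' t(X_i),\bbu)\isom\Hom(\bY,t(X_i^\vee))$ from step one yields a natural isomorphism
\[\Hom_{\mcT\otimes_FF'}(\bY\otimes'\bX,\bbu)\isom\Hom_{\mcT\otimes_FF'}(\bY,\bX^\vee).\]
This representing property makes $\bX^\vee$ independent of the chosen presentation and provides the evaluation $\ev:\bX^\vee\otimes'\bX\to\bbu$ as the morphism corresponding to $\id_{\bX^\vee}$.

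The main obstacle will be producing the coevaluation $\delta:\bbu\to\bX\otimes'\bX^\vee$ and verifying the triangular identities. My plan is to apply the construction a second time to a presentation of $\bX^\vee$ (again supplied by Lemma \ref{lem:presandcopres1}), producing $\bX^{\vee\vee}$, and then to exhibit a canonical isomorphism $\bX\isom\bX^{\vee\vee}$ by comparing the two universal properties through Yoneda. The coevaluation of $\bX$ then arises from the evaluation for $\bX^\vee$ via the symmetry constraint, and the triangular identities can be verified by restriction to the building blocks $t(X_0), t(X_1)$, where they hold by step one, and extended via naturality of the representing isomorphism. The subtle point to handle is ensuring that $(-)\otimes'\bY$ commutes with the kernel defining $\bX^\vee$ whenever $\bY$ is dualisable, which is precisely where the exactness of tensoring with objects in the essential image of $t$ is used; this suffices because the comparison $\bX\isom\bX^{\vee\vee}$ need only be tested on such $\bY$ by Yoneda.
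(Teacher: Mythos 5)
Your overall strategy is the same as the paper's: quote Theorem \ref{thm:scalextoftensorcats} for the tensor structure, take a presentation $t(X_1)\xrightarrow{f}t(X_0)\to\bX\to 0$ via Lemma \ref{lem:presandcopres1}, observe that $t$ preserves duals, and define $\bX^\vee:=\ker(f^\vee)$. The paper then simply asserts that ``every object of an abelian tensor category which possesses a presentation by dualisable objects is dualisable,'' and stops. You are right to be suspicious of this: as a general statement about abelian tensor categories it is \emph{false}. In the abelian tensor category of finitely generated $\bbZ$-modules, $\bbZ/2\bbZ = \coker(2:\bbZ\to\bbZ)$ has a presentation by the dualisable object $\bbZ$, and $\ker(2^\vee:\bbZ\to\bbZ)=0$, which is visibly not a dual of $\bbZ/2\bbZ$ (the triangle identity would force $\id_{\bbZ/2\bbZ}=0$). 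What this example shows is that the computation $\Hom(\bY\otimes'\bX,\bbu)\isom\Hom(\bY,\bX^\vee)$, which you carry out correctly, only establishes that $\bX^\vee$ is the \emph{internal} Hom $\iHom(\bX,\bbu)$. To pass from this to genuine dualisability one must know that the canonical map $\iHom(\bX,\bbu)\otimes'\bZ\to\iHom(\bX,\bZ)$, i.e. $\ker(f^\vee)\otimes'\bZ\to\ker(f^\vee\otimes'\bZ)$, is an isomorphism for every $\bZ$ --- which is precisely the exactness of $\otimes'$, a property equivalent to the rigidity one is trying to prove.

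Your diagnosis is therefore sharper than the paper's; the difficulty is that your proposed remedy does not close the gap. Two concrete problems. First, the step ``the coevaluation of $\bX$ then arises from the evaluation for $\bX^\vee$ via the symmetry constraint'' is not correct: the evaluation for $\bX^\vee$ is a morphism $\bX^{\vee\vee}\otimes'\bX^\vee\to\bbu$, and after identifying $\bX\isom\bX^{\vee\vee}$ and applying the symmetry constraint it becomes a morphism $\bX^\vee\otimes'\bX\to\bbu$ --- which is (essentially) the evaluation for $\bX$ again, not a coevaluation $\bbu\to\bX\otimes'\bX^\vee$. The symmetry constraint permutes tensor factors; it does not reverse the direction of an arrow. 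Producing the coevaluation by adjunction would require knowing $\iHom(\bX^\vee,-)\isom\bX\otimes'(-)$, which is exactly dualisability of $\bX^\vee$ with dual $\bX$, so the argument becomes circular. Second, even granting the isomorphism $\bX\isom\bX^{\vee\vee}$ (which one can indeed test against the dualisable objects $t(Y)$ by the five lemma, as you say), this alone is not equivalent to dualisability of $\bX$: reflexivity with respect to the internal dual is strictly weaker than rigidity.

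The honest conclusion is that the heart of the matter is the exactness of $\otimes'$ on $\mcT\otimes_FF'$ (equivalently, the flatness of every object), and neither the paper's one-line assertion nor your sketch actually establishes it. Your write-up should make the need for this fact explicit rather than attempting to circumvent it via $\bX\isom\bX^{\vee\vee}$, because, as the counterexample shows, the circumvention cannot work at the level of generality at which it is formulated.
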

\begin{proof}
$\mcT\otimes_FF'$ carries the natural structure of abelian tensor category given
by Theorem \ref{thm:scalextoftensorcats}. We must show that it is rigid, so
let $\bX$ be an object of $\mcT\otimes_FF'$. By Lemma \ref{lem:presandcopres1}
there exists a presentation
\[t(X_1)\arrover{f}t(X_0)\to\bX\to 0\]
of $\bX$ with objects $X_0,X_1$ of $\mcT$. Since $\mcT$ is rigid,
the objects $X_i$ are dualisable. Since $t$ is a tensor functor,
so are the objects $t(X_i)$, with duals $t(X_i^\vee)$. But every
object of an abelian tensor category which possesses a presentation
by dualisable objects is dualisable. Namely, $\bX^\vee:=\ker(f^\vee)$
is a dual of $\bX=\coker(f)$.
\end{proof}

For pre-Tannakian categories, we obtain yet another universal property
of $t$ with respect to \emph{exact} tensor functors,
due to the following fact.

\begin{lem}\label{lem:rightexactisexactforrigids}
Let $V:\:\mcS\to\mcT$ be a tensor functor. Assume that $\mcS$ is rigid.
Then $V$ is exact if and only if it is right exact.
\end{lem}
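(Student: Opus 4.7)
The ``only if'' direction is trivial, so assume $V$ is right exact; I must show that $V$ is also left exact. The strategy is to convert the question of left exactness into one of right exactness by systematic use of dualization in $\mcS$, which the hypothesis then handles.

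The two facts I would establish first are these. Since $\mcS$ is rigid, the contravariant functor $(-)^\vee\colon\mcS\to\mcS$ is involutive up to the canonical biduality isomorphism $X\isom X^{\vee\vee}$, and dualization of morphisms satisfies $f^{\vee\vee}=f$ under this identification; thus $(-)^\vee$ is an anti-equivalence of abelian categories. In particular it is exact in the contravariant sense, that is, it carries a short exact sequence $0\to A\to B\to C\to 0$ to a short exact sequence $0\to C^\vee\to B^\vee\to A^\vee\to 0$. The second fact is that any tensor functor preserves dualizability: applying $V$ to the structural data $\delta_X\colon\bbu\to X\otimes X^\vee$ and $\ev_X\colon X\otimes X^\vee\to\bbu$, and post-composing with the tensor and unit constraints of $V$, exhibits $V(X^\vee)$ as a dual of $V(X)$ in $\mcT$. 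Duals being unique up to canonical isomorphism, this yields a canonical isomorphism $V(X^\vee)\isom V(X)^\vee$ in $\mcT$, so in particular each $V(X)$ for $X$ dualizable in $\mcS$ is itself dualizable in $\mcT$ and the dualization operation there is also an exact anti-equivalence on the essential image of $V$.

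Granting these, given a short exact sequence $0\to A\to B\to C\to 0$ in $\mcS$, right exactness of $V$ immediately gives the exact sequence $V(A)\to V(B)\to V(C)\to 0$ in $\mcT$; it only remains to show injectivity of $V(A)\to V(B)$. Dualizing the original sequence yields an exact sequence $0\to C^\vee\to B^\vee\to A^\vee\to 0$ in $\mcS$; applying the right exact functor $V$ gives exactness of $V(C^\vee)\to V(B^\vee)\to V(A^\vee)\to 0$, and the natural isomorphisms $V(X^\vee)\isom V(X)^\vee$ turn this into an exact sequence $V(C)^\vee\to V(B)^\vee\to V(A)^\vee\to 0$ in $\mcT$. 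A final dualization, permissible because $V(A), V(B), V(C)$ are dualizable in $\mcT$ and dualization is exact on dualizable objects, produces the exact sequence $0\to V(A)^{\vee\vee}\to V(B)^{\vee\vee}\to V(C)^{\vee\vee}$, which via biduality is the required exact sequence $0\to V(A)\to V(B)\to V(C)$.

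The main obstacle, and the only non-formal ingredient, is the verification that $V$ preserves duals canonically and compatibly with the tensor constraints; everything else is a cosmetic rearrangement of the hypotheses. This preservation is however entirely standard for tensor functors, as it rests only on the uniqueness of duals and the fact that $V$ is by definition compatible with the associativity, commutativity, and unit constraints used to express the rigidity equations $(\ev\otimes\id)\circ(\id\otimes\delta)=\id$.
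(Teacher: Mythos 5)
Your proof is correct and follows essentially the same route as the paper's: dualize the sequence in $\mcS$, apply the right exact functor $V$, use that tensor functors commute with duals, and dualize once more in $\mcT$ to recover the original sequence up to biduality. The paper states this in three terse sentences (``its image under $V$ may be identified with the dual of the image of its dual''), while you unfold each step; in particular you explicitly flag the one point the paper passes over silently, namely that the final dualization is performed in $\mcT$ (which is not assumed rigid) and must be justified by the fact that dualization is left exact when restricted to dualizable objects --- a consequence of $\Hom(Y,X^\vee)\isom\Hom(Y\otimes X,\bbu)$ together with right exactness of $Y\otimes(-)$ and left exactness of $\Hom(-,\bbu)$. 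So the two arguments are the same in substance; yours is a more careful write-up of the paper's sketch.
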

\begin{proof}
Every tensor functor commutes with duals. Dualisation is an exact functor.
So if $0\to\bX'\to\bX\to\bX''$ is a left exact sequence in $\mcS$,
then its image under $V$ may be identified with the dual of the image
of its dual, which must therefore be left exact.
\end{proof}

We end this subsection with the following two observations.

\begin{lem}\label{lem:rigidsautofaithful}
Let $\mcS,\mcT$ be abelian tensor categories,
$V:\:\mcS\to\mcT$ an exact tensor functor, and assume
that $\mcS$ is rigid. If $\mcT\neq 0$, then $V$ is faithful.
\end{lem}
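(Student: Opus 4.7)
The plan is to first reduce the faithfulness claim to the object-level statement that $V$ annihilates no nonzero object of $\mcS$. Given any morphism $f : X \to Y$ with $V(f) = 0$, the exactness of $V$ yields $V(\im(f)) = \im(V(f)) = 0$; so if we know that $V(Z) = 0$ implies $Z = 0$ for every $Z \in \mcS$, then $\im(f) = 0$ and $f = 0$.

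To prove the object-level statement I would argue by contradiction and suppose $V(X) = 0$ with $X \neq 0$. Since every tensor functor preserves duals (as used in the proof of Lemma \ref{lem:rightexactisexactforrigids}), one has $V(X^\vee) = V(X)^\vee = 0$, and hence $V(X \otimes X^\vee) = V(X) \otimes V(X^\vee) = 0$ via the tensor constraint. Now consider the evaluation $\ev : X \otimes X^\vee \to \bbu$: the triangle identity expresses $\id_X$ as a composition factoring through $\ev$ (after applying the commutativity constraint), so if $\ev$ vanished then $\id_X = 0$, contradicting $X \neq 0$. Therefore $\ev$ is a nonzero morphism into $\bbu$.

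The key additional ingredient is the classical fact that in a rigid abelian tensor category with $\End(\bbu) \cong F$ a field, the unit object $\bbu$ is a simple object of $\mcS$ (cf.\ \cite{DeM82}). Granting this, the nonzero subobject $\im(\ev) \subseteq \bbu$ must equal $\bbu$, making $\ev$ an epimorphism. Exactness of $V$ then produces an epimorphism $V(\ev) : 0 \to \bbu_\mcT$, forcing $\bbu_\mcT = 0$. By biadditivity of $\otimes$, any $W \in \mcT$ would then satisfy $W \cong \bbu_\mcT \otimes W = 0$, so $\mcT = 0$, contradicting the hypothesis.

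The main obstacle is invoking the simplicity of $\bbu$ in a rigid abelian tensor category; aside from this external input, the argument is a direct application of exactness, the tensor structure and the triangle identities. The simplicity of $\bbu$ is a standard result whose proof relies on the additivity of the categorical rank on short exact sequences and a careful analysis of subobjects and quotients of $\bbu$, and I would cite \cite{DeM82} or \cite{Del90} for it rather than reproduce the argument here.
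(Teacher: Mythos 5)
Your proof is correct and takes essentially the same route as the paper's. The paper simply asserts that a dualisable object $X$ is nonzero if and only if $\ev : X\otimes X^\vee\to\bbu$ is surjective and notes this is preserved by right exact tensor functors; you fill in the justification of that criterion via the triangle identity and the simplicity of $\bbu$, which is a reliance the paper's argument implicitly shares (as the criterion fails when $\bbu$ is not simple), so your version is no less general and somewhat more transparent.
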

\begin{proof}
An exact functor is faithful if and only if it maps all
non-zero objects to non-zero objects. A dualisable object
$X\in\mcS$ is non-zero if and only if $X\otimes X^\vee\to\bbu$
is surjective, and this criterion is respected by
right exact tensor functors. So if $\mcT\neq 0$, that is,
if $\bbu_{\mcT}\not\isom 0$, then $V$ is faithful.
\end{proof}

\begin{lem}\label{lem:aslfdkjsal}
Let $\mcT$ be a pre-Tannakian category over $F$, 
$\mcT'$ an abelian tensor category over $F'$, and
consider two exact $F$-linear tensor functors $V',W':\:\mcT\otimes_FF'\to\mcT'$
Let $\eta:\:V'\Rightarrow W'$ be a natural transformation.
Then $\eta$ is a morphism of tensor functors if and only if
its restriction $V\Rightarrow W$ along $t$ is such.
\end{lem}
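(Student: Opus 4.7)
The ``only if'' direction is immediate: the tensor structure on $V := V' \circ t$ and $W := W' \circ t$ is defined so that the tensor constraints of $V, W$ are the composites of those of $V', W'$ with those of $t$, so compatibility of $\eta$ with the constraints of $V', W'$ restricts to compatibility of $\eta \ast \id_t$ with the constraints of $V, W$.

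For the ``if'' direction, let $c^{V'}$ and $c^{W'}$ denote the tensor constraints of $V'$ and $W'$. We must prove that for every pair $(\bX,\bY)$ of objects of $\mcT \otimes_F F'$ the square
\[\xymatrix@C=3em{
V'(\bX)\otimes V'(\bY) \ar[r]^{c^{V'}_{\bX,\bY}} \ar[d]_{\eta_{\bX}\otimes\eta_{\bY}} & V'(\bX\otimes'\bY) \ar[d]^{\eta_{\bX\otimes'\bY}} \\
W'(\bX)\otimes W'(\bY) \ar[r]^{c^{W'}_{\bX,\bY}} & W'(\bX\otimes'\bY)
}\]
commutes, i.e.\ the two composites agree as natural transformations between the bifunctors $F_1 := V'(-)\otimes V'(-)$ and $F_2 := W'(-\otimes' -)$ from $(\mcT\otimes_FF')^{\times 2}$ to $\mcT'$. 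By hypothesis they agree on the subcategory $t(\mcT)\times t(\mcT)$, since the restriction of $\eta$ along $t$ is a morphism of tensor functors.

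The key step is to promote this agreement on $t(\mcT)\times t(\mcT)$ to agreement everywhere by applying Proposition \ref{propunivpropscalexfuncinV} twice, once in each variable. First fix $\bX = t(X)$; then $\bY \mapsto F_1(t(X),\bY)$ and $\bY \mapsto F_2(t(X),\bY)$ are right exact $F'$-linear functors $\mcT\otimes_FF' \to \mcT'$ (since $V', W'$ are exact and the tensor product of $\mcT'$ is right exact in each variable). The two natural transformations under consideration agree on $\bY=t(Y)$ for all $Y\in\mcT$, so by the uniqueness in Proposition \ref{propunivpropscalexfuncinV} they agree for all $\bY$. Next fix an arbitrary $\bY$; then $\bX\mapsto F_1(\bX,\bY)$ and $\bX\mapsto F_2(\bX,\bY)$ are again right exact $F'$-linear functors $\mcT\otimes_FF'\to\mcT'$, and by what we just established the two natural transformations agree on $\bX = t(X)$ for all $X\in\mcT$. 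Applying Proposition \ref{propunivpropscalexfuncinV} one more time yields agreement for all $\bX$, which completes the proof.

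No step is really an obstacle; the only point requiring minor care is the verification that the bifunctors $F_1,F_2$ and the restricted functors in each variable are right exact and $F'$-linear, so that Proposition \ref{propunivpropscalexfuncinV} applies. One also has to observe that compatibility of $\eta\ast\id_t$ with the tensor constraints of $V$ and $W$ translates literally, on objects of the form $(t(X),t(Y))$, into the commutativity of the square above for $(\bX,\bY)=(t(X),t(Y))$, which is clear because the constraints of $V,W$ are by definition the composites of the constraints of $V',W'$ with those of $t$.
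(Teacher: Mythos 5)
Your proposal is correct and fills in exactly the details the paper suppresses — the paper's own ``proof'' reads only ``Again, as in Theorem \ref{thm:scalextoftensorcats}, this is a matter of checking that certain natural transformations are equal, and we suppress it.'' Your approach (view the two composites around the square as natural transformations $V'(-)\otimes V'(-)\Rightarrow W'(-\otimes'-)$ of bifunctors $(\mcT\otimes_FF')^{\times 2}\to\mcT'$, observe that the hypothesis gives agreement on $t(\mcT)\times t(\mcT)$ after unwinding that the constraints of $V,W$ are the composites of those of $V',W'$ with those of $t$, and then invoke the uniqueness clause of Proposition \ref{propunivpropscalexfuncinV} once in each variable) is precisely the technique the paper's pointer to Theorem \ref{thm:scalextoftensorcats} and Proposition \ref{prop:univpropscalexbilin}(b) has in mind, adapted to bifunctors with a different target category by iterating the one-variable proposition. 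The right-exactness and $F'$-linearity checks you flag are indeed the only things to verify, and they all hold since $V',W'$ are exact $F'$-linear and the tensor products of $\mcT'$ and $\mcT\otimes_FF'$ are right exact and $F'$-bilinear. No gaps.
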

\begin{proof}
Again, as in Theorem \ref{thm:scalextoftensorcats},
this is a matter of checking that certain natural transformations
are equal, and we suppress it.
\end{proof}

\subsection{Permanence of Relative Full Faithfulness}\label{ss:permofrelff}

\begin{prop}\label{prop:permofrelfulfaith}
Let $\mcA$ be a pre-Tannakian category over $F$, $\mcB$ an $F'$-linear abelian
tensor category, $V:\:\mcA\to\mcB$ an exact $F$-linear tensor functor and
$V':\:\mcA\otimes_FF'\to\mcB$ the induced exact $F'$-linear functor.
Then $V$ is $F'/F$-fully faithful if and only if $V'$ is fully faithful.
\end{prop}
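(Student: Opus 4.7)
The plan is to treat the two implications separately. The forward direction (``$V'$ fully faithful implies $V$ is $F'/F$-fully faithful'') is a direct computation: since $V$ factors as $V'\circ t$ up to isomorphism, composing the chain
\[F'\otimes_F\Hom_{\mcA}(X,Y)\isom\Hom_{\mcA\otimes_FF'}(tX,tY)\isom\Hom_{\mcB}(V'tX,V'tY)\]
where the first isomorphism is Proposition \ref{lem:tisFpFff} and the second is the assumption, yields exactly the $F'/F$-full faithfulness of $V$.

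For the reverse direction, I would reduce the question about arbitrary Hom-spaces to the special case of $\Hom(\bbu,-)$ via internal Homs. Since both $\mcA\otimes_FF'$ (by Proposition \ref{prop:scalextofrigidtensorcats}) and a priori $\mcB$ (or at least its subcategory generated by the image of $V'$) are tensor categories with duals, and $V'$ is a tensor functor by Proposition \ref{prop:tensorgivestensor}, we have $\Hom(\bX,\bY)=\Hom(\bbu,\bX^\vee\otimes\bY)$ and the same formula on the $\mcB$ side, compatibly via $V'$. Thus it suffices to prove: for every $\bZ\in\mcA\otimes_FF'$, the map $\Hom(\bbu,\bZ)\to\Hom(\bbu_\mcB,V'\bZ)$ induced by $V'$ is an isomorphism.

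To compute $\Hom(\bbu,\bZ)$, I would produce a copresentation $0\to\bZ\to t(Z_0)\to t(Z_1)$ with $Z_i\in\mcA$ by dualising a presentation $t(Z_1^\vee)\to t(Z_0^\vee)\to\bZ^\vee\to 0$ of $\bZ^\vee$ given by Lemma \ref{lem:presandcopres1}, using that $t$ is a tensor functor hence commutes with duals. Applying the left-exact functor $\Hom(\bbu,-)$ on the source side, and applying $V'$ (which is exact by Lemma \ref{lem:rightexactisexactforrigids} since it is a right exact tensor functor from a rigid category) followed by $\Hom(\bbu_\mcB,-)$ on the target side, gives two four-term left-exact sequences connected by $V'$. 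The terms $\Hom(\bbu,t(Z_i))$ identify with $F'\otimes_F\Hom_{\mcA}(\bbu_\mcA,Z_i)$ by Proposition \ref{lem:tisFpFff} (noting $\bbu_{\mcA\otimes_FF'}=t(\bbu_\mcA)$ by Theorem \ref{thm:scalextoftensorcats}), while the terms $\Hom(\bbu_\mcB,V(Z_i))$ identify with the same $F'$-vector space by the hypothesis that $V$ is $F'/F$-fully faithful. The four-lemma (or snake lemma) then gives the desired isomorphism on kernels.

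The main obstacle I anticipate is checking that these two identifications are compatible with the horizontal arrows induced by the copresentation, so that the comparison diagram genuinely commutes. Using $F'/F$-full faithfulness of $t$, the connecting map $t(Z_0)\to t(Z_1)$ can be written as $\sum_i \lambda_i\, t(f_i)$ with $\lambda_i\in F'$ and $f_i\in\Hom_{\mcA}(Z_0,Z_1)$; applying $V'$ yields $\sum_i \lambda_i V(f_i)$, and under the respective identifications with $F'\otimes_F\Hom_{\mcA}(\bbu_\mcA,Z_i)$ both induced maps become $\sum_i\lambda_i(f_i)_*$, whence the squares commute. This commutativity is the crucial bookkeeping, after which the result follows immediately from the five-lemma.
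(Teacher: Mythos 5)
Your forward direction is exactly the paper's.

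For the reverse direction, your argument is correct but takes a genuinely different route. The paper's proof is a two-step Five-Lemma argument: it first shows the comparison map on $\Hom(\bX,t(Y))$ is an isomorphism for $\bX\in\mcA\otimes_FF'$ and $Y\in\mcA$ by resolving $\bX$ via a presentation $t(X_1)\to t(X_0)\to\bX\to 0$, and then upgrades to arbitrary $\Hom(\bX,\bY)$ by resolving $\bY$ via a copresentation $0\to\bY\to t(Y^0)\to t(Y^1)$ obtained exactly as in your argument (dualise a presentation of $\bY^\vee$). You instead collapse those two steps into one by invoking rigidity in the form of internal Homs: $\Hom(\bX,\bY)\isom\Hom(\bbu,\bX^\vee\otimes\bY)$, compatibly transported through $V'$ since $V'$ is a tensor functor and hence preserves duals of objects in its source. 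That reduction is legitimate — one only needs $V'(\bX)$ to be dualisable in $\mcB$, which follows because $\bX$ is dualisable and $V'$ is a tensor functor; $\mcB$ itself need not be rigid. After the reduction you apply a single copresentation-plus-Five-Lemma step for $\Hom(\bbu,-)$, and your commutativity check via writing $d=\sum_i\lambda_i\,t(f_i)$ using $F'/F$-full faithfulness of $t$ is the same bookkeeping the paper's proof implicitly relies on. What the internal-Hom trick buys you is one fewer Five-Lemma application and a cleaner symmetry between source and target; what the paper's approach buys is that it never explicitly invokes the internal-Hom formula or the compatibility of $V'$ with duals, only the raw $\Hom$-sets and exactness. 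Both arguments need the pre-Tannakian (rigidity) hypothesis on $\mcA$ — yours for the internal-Hom reduction and the copresentation, the paper's only for the copresentation — and both need $V'$ exact, via Lemma \ref{lem:rightexactisexactforrigids}, to push copresentations forward.
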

\begin{proof}
If $V'$ is fully faithful, then its restriction $V=V'\circ t$ is $F'/F$-fully faithful
since $t$ is $F'/F$-fully faithful by Lemma \ref{lem:tisFpFff}.

Conversely, let us assume that $V$ is $F'/F$-fully faithful. We first prove
that for every $\bX\in\mcA\otimes_FF'$ and every $Y\in\mcA$, the homomorphism
\[V': \Hom_{\mcA'}\big(\bX,t(Y)\big)\To\Hom_{\mcB}\big(V'(\bX),V(Y)\big)\]
is an isomorphism. With the help of Lemma \ref{lem:presandcopres1} we choose a presentation
\begin{equation}\label{eqn:someprese}
t(X_1)\to t(X_0)\to\bX\to 0
\end{equation}of $\bX$. Applying $\Hom(-,t(Y))$ to this sequence, and applying $\Hom(-,V(Y))$ to the right
exact sequence which is the image of (\ref{eqn:someprese}) under $V'$, we obtain a commutative diagram with exact rows:
\[\def\objectstyle{\scriptstyle}
\def\labelstyle{\scriptstyle}
\xymatrix{
0 \ar[r] & \Hom\big(\bX,t(Y)\big) \ar[d]\ar[r] & \Hom\big(t(X_0),t(Y)\big) \ar[r]\ar[d] & \Hom\big(t(X_1),t(Y\big) \ar[d]\\
0 \ar[r] & \Hom\big(V'(\bX),V(Y)\big)         \ar[r]       & \Hom\big(V(X_0),V(Y)\big) \ar[r]                          & \Hom\big(V(X_1),V(Y)\big)
}\]
The two last vertical arrows are isomorphisms since both $t$ and $V$ are $F'/F$-fully faithful functors.
By the Five Lemma, the first vertical arrow is an isomorphism, as claimed.

In general, consider $\bX$ and $\bY$ in $\mcA\otimes_FF'$. The dual of a presentation of $\bY^\vee$ gives
us a copresentation
\begin{equation}\label{eqn:someprese2}
0\to\bY\to t(Y^0)\to t(Y^1)
\end{equation} of $\bY$. Applying $\Hom(\bX,-)$ to this sequence, and
applying $\Hom(-,V'\bY)$ to the left exact sequence which is the image of (\ref{eqn:someprese2}) under $V'$, we obtain a diagram
\[\def\objectstyle{\scriptstyle}
\def\labelstyle{\scriptstyle}
\xymatrix{
0 \ar[r] & \Hom\big(\bX,\bY\big)     \ar[d]\ar[r] & \Hom\big(\bX,t(Y^0)\big) \ar[r]\ar[d] & \Hom\big(\bX,t(Y^1)\big) \ar[d]\\
0 \ar[r] & \Hom\big(V'(\bX),V'(\bY)\big)       \ar[r] & \Hom\big(V'(\bX),V(Y^0)\big) \ar[r]       & \Hom\big(V'(\bX),V(Y^1)\big)
}\]
By what we have already proven, the last two vertical arrows are isomorphisms, so by the Five Lemma
so is the first, and we have shown that $V'$ is fully faithful.

\end{proof}

\subsection{Induced Equivalences}\label{ss:inducedequivalences}

\begin{thm}\label{thm:mainthm2}
Let $F'/F$ be a separable field extension, $\mcT$ a pre-Tannakian category
over $F$, $\mcT'$ a pre-Tannakian category over $F'$ and consider an
$F$-linear exact tensor functor $V:\:\mcT\to\mcT'$. Let $\llkurv V\mcT\rrkurv_{\otimes}$
denote the strictly full pre-Tannakian subcategory of $\mcT'$ generated by the essential
image of $V$.

If $V$ is $F'/F$-fully faithful and semisimple on objects, then the
functor
\[V':\:\mcT\otimes_FF'\To\llkurv V\mcT\rrkurv_{\otimes}\]
induced by $V$ is an equivalence of pre-Tannakian categories.
\end{thm}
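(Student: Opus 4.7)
The plan is to assemble the equivalence from the individual permanence results developed earlier, after checking that the induced functor $V'$ inherits enough structure to apply them.

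First I would set up $V'$ and its properties. By Proposition \ref{prop:scalextofrigidtensorcats}, $\mcT\otimes_FF'$ is pre-Tannakian over $F'$. Since $V$ is a right exact $F$-linear tensor functor, Theorem \ref{thm:univpropscalex} together with Proposition \ref{prop:tensorgivestensor} produces an $F'$-linear right exact tensor functor $V':\:\mcT\otimes_FF'\to\mcT'$ with $V'\circ t\isom V$; by Lemma \ref{lem:rightexactisexactforrigids}, $V'$ is automatically exact because its source is rigid. Since its essential image clearly lies in $\llkurv V\mcT\rrkurv_{\otimes}$, we may and do view $V'$ as a functor into that strictly full pre-Tannakian subcategory.

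Next I would verify that $V'$ is fully faithful and semisimple on objects. Full faithfulness of $V'$ follows immediately from the hypothesis that $V$ is $F'/F$-fully faithful by Proposition \ref{prop:permofrelfulfaith}. Separability of $F'/F$ together with the hypothesis that $V$ is semisimple on objects allows us to invoke Proposition \ref{prop:permofsemisimpli} to conclude that $V'$ is semisimple on objects as well.

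It then remains to prove that $V'$ is essentially surjective onto $\llkurv V\mcT\rrkurv_{\otimes}$, and this is the step that requires the most care. The strategy is to show that the essential image of $V'$ is already a strictly full pre-Tannakian subcategory of $\mcT'$; since it manifestly contains the essential image of $V=V'\circ t$, it must then contain $\llkurv V\mcT\rrkurv_{\otimes}$, giving essential surjectivity. Closure under direct sums, tensor products and duals follows because $V'$ is an exact tensor functor between categories possessing these operations (the dual of a presentation argument used in Proposition \ref{prop:scalextofrigidtensorcats} ensures compatibility with duals). For closure under subquotients, I would apply Proposition \ref{prop:ffssisessimsubquotclosed} with ground field $F'$: the source $\mcT\otimes_FF'$ is finite (indeed $F'$-finite) by Theorem \ref{thm:scalarextwoutup}, the target $\mcT'$ is $F'$-linear abelian, and $V'$ is $F'$-linear, exact, fully faithful and semisimple on objects by what precedes.

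The main obstacle is this last step. In particular one must check that Proposition \ref{prop:ffssisessimsubquotclosed} can be applied cleanly here, and that its conclusion combined with tensor/dual/sum closure actually yields a strictly full pre-Tannakian subcategory in the sense of the definition above Theorem \ref{thm:mainthm2}; the rest of the argument is the formal permanence machinery set up in Sections 1--2.
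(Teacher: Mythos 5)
Your proposal is correct and follows essentially the same route as the paper's proof: both establish that $V'$ is an exact $F'$-linear tensor functor via Theorem \ref{thm:univpropscalex}, Proposition \ref{prop:tensorgivestensor} and Lemma \ref{lem:rightexactisexactforrigids}, deduce full faithfulness from Proposition \ref{prop:permofrelfulfaith}, and obtain essential surjectivity by combining Proposition \ref{prop:permofsemisimpli} with Proposition \ref{prop:ffssisessimsubquotclosed}. The paper's proof is a one-paragraph citation of exactly these results; you have merely unpacked the middle step (why subquotient-closure of the essential image, together with closure under $\otimes$, duals and sums coming from the tensor-functor structure, forces the essential image to equal $\llkurv V\mcT\rrkurv_{\otimes}$), which is a correct and helpful expansion of what the paper leaves implicit.
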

\begin{proof}
The functor $V'$ is an $F'$-linear exact tensor functor
by Theorem \ref{thm:univpropscalex}(a), Proposition \ref{prop:tensorgivestensor} and
Lemma \ref{lem:rightexactisexactforrigids}.
It is fully faithful by Proposition \ref{prop:permofrelfulfaith}.
It is essentially surjective
by Propositions \ref{prop:permofsemisimpli} and
\ref{prop:ffssisessimsubquotclosed}.
Therefore, it is an equivalence of pre-Tannakian categories. 
\end{proof}

\section{Tannakian Categories}

\subsection{Scalar Extension of Tannakian Categories}

Let $F$ be a field.

\begin{dfn}
\begin{enumerate}
  \item Let $R$ be a commutative $F$-algebra. A \emph{fibre functor over $R$} of a pre-Tannakian category $\mcT$ over $F$
  is a faithful $F$-linear exact tensor functor $\omega$ from $\mcT$ to the category
  of $R$-modules which has values in the rigid subcategory 
  of finitely generated projective $R$-modules.
  \item A \emph{neutral}
  fibre functor is a fibre functor over $F$ itself, it takes values in the
  category $\Vect_F$ of finite-dimensional $F$-vector spaces.
  \item A \emph{Tannakian category over $F$} is a pre-Tannakian category for which there
  exists a fibre functor over some field extension $F'/F$. If there exists a neutral fibre functor,
  we say that $\mcT$ is \emph{neutral}.
  \item A subcategory $\mcS$ of a Tannakian category $\mcT$ is a \emph{strictly full Tannakian subcategory}
  if it is a strictly full pre-Tannakian subcategory of $\mcT$, that is,
  if it is full and closed under direct sums, tensor products, duals and subquotients in $\mcT$.
\end{enumerate}
\end{dfn}

We start by checking that our notion of scalar extension
for abelian categories gives rise to a notion of scalar extension for
Tannakian categories. In particular, Tannakian categories may
be ``neutralised''. Together with the following Theorem \ref{thm:nonneutralTannaka},
we generalise \cite[Proposition 3.11]{DeM82} and substantiate
\cite[Proposition A.12]{Mil92}.

\begin{prop}\label{prop:equivalenceoffibrefunctors}
Let $\mcT$ be a pre-Tannakian category over $F$, and consider a field
extension $F'/F$. For every commutative $F'$-algebra $R'$ the restriction
functor
\[\left(\!\!\!\left(\text{\begin{tabular}{c}fibre functors on\\$\mcT\otimes_FF'$ over $R'$\end{tabular}}\right)\!\!\!\right) \arrover{(-)\circ t} %
  \left(\!\!\!\left(\text{\begin{tabular}{c}fibre functors on\\$\mcT$ over $R'$\end{tabular}}\right)\!\!\!\right)\]
is an equivalence of categories.
\end{prop}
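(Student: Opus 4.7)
The plan is to exhibit the restriction functor as both essentially surjective and fully faithful by combining the universal property of $t$ from Subsection~1.4 with the tensor-theoretic supplements of Section~2. First, I would check that the restriction functor is well-defined: for any fibre functor $\omega'$ on $\mcT\otimes_FF'$, the composition $\omega'\circ t$ is $F$-linear, exact, and tensor, takes values in finitely generated projective $R'$-modules (since $\omega'$ does), and is faithful by Lemma~\ref{lem:rigidsautofaithful} applied to the non-zero target $R'$-$\Mod$.

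For essential surjectivity, I would start from a fibre functor $\omega\colon\mcT\to R'\text{-}\Mod$. Since $R'\text{-}\Mod$ is an $F'$-linear abelian tensor category, Theorem~\ref{thm:univpropscalex}(a) produces a right exact $F'$-linear functor $\omega'\colon\mcT\otimes_FF'\to R'\text{-}\Mod$ with $\omega\cong\omega'\circ t$. Proposition~\ref{prop:tensorgivestensor} upgrades $\omega'$ to a tensor functor. Because $\mcT\otimes_FF'$ is pre-Tannakian by Proposition~\ref{prop:scalextofrigidtensorcats}, Lemma~\ref{lem:rightexactisexactforrigids} promotes $\omega'$ from right exact to exact. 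A tensor functor out of a rigid category sends dualisable objects to dualisable objects, and in $R'\text{-}\Mod$ dualisable objects are precisely the finitely generated projective modules, so $\omega'$ automatically takes values in that subcategory. Finally, Lemma~\ref{lem:rigidsautofaithful} gives faithfulness of $\omega'$ (the target is non-zero because its restriction $\omega$ is faithful). Hence $\omega'$ is a fibre functor on $\mcT\otimes_FF'$ over $R'$ whose restriction along $t$ is isomorphic to $\omega$.

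For full faithfulness on morphisms, I would fix two fibre functors $\omega_1',\omega_2'$ on $\mcT\otimes_FF'$ and set $\omega_i:=\omega_i'\circ t$. Both $\omega_i'$ are right exact and $F'$-linear, so Proposition~\ref{propunivpropscalexfuncinV} supplies a bijection between natural transformations $\omega_1\Rightarrow\omega_2$ and natural transformations $\omega_1'\Rightarrow\omega_2'$, given by restriction along $t$ in one direction and the unique extension in the other. Lemma~\ref{lem:aslfdkjsal} then says that an extension $\beta'\colon\omega_1'\Rightarrow\omega_2'$ is a morphism of tensor functors if and only if its restriction $\beta\colon\omega_1\Rightarrow\omega_2$ is. Hence the restriction map carries $\Hom^\otimes(\omega_1',\omega_2')$ bijectively onto $\Hom^\otimes(\omega_1,\omega_2)$, which completes the proof of equivalence.

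The only genuinely substantive point is the essential surjectivity step, where one must assemble several previously proved ingredients—the universal property, its tensorial refinement, the right exact implies exact lemma for rigid sources, preservation of dualisability, and the identification of dualisable $R'$-modules with finitely generated projective ones—to verify that the extension $\omega'$ enjoys every property required of a fibre functor. Once this is in place, the full faithfulness on morphism sets is a direct application of Proposition~\ref{propunivpropscalexfuncinV} and Lemma~\ref{lem:aslfdkjsal}.
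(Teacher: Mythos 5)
Your proposal is correct and follows essentially the same route as the paper: well-definedness from the properties of $t$, full faithfulness from Proposition~\ref{propunivpropscalexfuncinV} together with Lemma~\ref{lem:aslfdkjsal}, and essential surjectivity by extending a fibre functor via Theorem~\ref{thm:univpropscalex}(a) and then invoking Proposition~\ref{prop:tensorgivestensor}, Lemma~\ref{lem:rightexactisexactforrigids}, Lemma~\ref{lem:rigidsautofaithful}, Proposition~\ref{prop:scalextofrigidtensorcats}, and the identification of dualisable $R'$-modules with finitely generated projective ones. Your treatment of well-definedness is slightly more explicit than the paper's, but the argument and the chain of lemmas are the same.
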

\begin{proof}
The given functor $\res:=(-)\circ t$ maps fibre functors on
$\mcT\otimes_FF'$ to fibre functors on $\mcT$ since $t$
is exact, $F$-linear, faithful by either
Proposition \ref{lem:tisFpFff} or Lemma \ref{lem:rigidsautofaithful},
and a tensor functor by Theorem \ref{thm:scalextoftensorcats}(b).
Hence, $\res$ is well-defined. It is fully faithful by
Proposition \ref{propunivpropscalexfuncinV} and Lemma
\ref{lem:aslfdkjsal}.

To show that $\res$ is essentially surjective,
let $\omega$ be a fibre functor on $\mcT$ over a given
$F'$-algebra $R'$. The $F$-linear right-exact
functor $\omega'$ on $\mcT\otimes_FF'$ induced by Theorem
\ref{thm:univpropscalex} fulfills $\res(\omega')\isom\omega$
by item (a) of that theorem. Now $\omega'$ is exact by
Lemma \ref{lem:rightexactisexactforrigids}, faithful by
Lemma \ref{lem:rigidsautofaithful}, and a tensor functor
by Proposition \ref{prop:tensorgivestensor}.
A priori, $\omega'$ has values in the category of $R'$-modules.
However, since $\mcT\otimes_FF'$ is rigid by Proposition
\ref{prop:scalextofrigidtensorcats}, the essential
image of $\omega'$ must consist of dualisable $R'$-modules
(cf.\ the proof of Proposition \ref{prop:scalextofrigidtensorcats}).
It is well known that a dualisable $R'$-module is finitely
generated and projective, see \cite{Del87}.
Hence, $\omega$ is a fibre functor on
$\mcT\otimes_FF'$ over $R'$, and we are done.
\end{proof}

\begin{thm}\label{thm:scalextoftannakiancats}
Let $\mcT$ be a Tannakian category over $F$, and consider
a field extension $F'/F$.
\begin{enumerate}
  \item $\mcT\otimes_FF'$ is a Tannakian category over $F'$.
  \item If $\mcT$ has a fibre functor over $F'$, then $\mcT\otimes_FF'$ is neutral.
\end{enumerate}
\end{thm}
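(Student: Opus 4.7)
The plan is to deduce both parts directly from Proposition \ref{prop:equivalenceoffibrefunctors}, which is the ``hard'' statement already proven; both (a) and (b) become essentially formal consequences, differing only in which target algebra one feeds into that proposition.

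For part (b), the argument is almost immediate. By assumption, we are given a fibre functor $\omega:\:\mcT\to\Vect_{F'}$, which is in particular a fibre functor on $\mcT$ over the commutative $F'$-algebra $R':=F'$. Proposition \ref{prop:equivalenceoffibrefunctors} then produces a fibre functor $\omega'$ on $\mcT\otimes_FF'$ over $F'$ with $\omega'\circ t\isom\omega$. By construction, $\omega'$ takes values in finitely generated projective $F'$-modules, i.e.\ in $\Vect_{F'}$, so it is a neutral fibre functor on $\mcT\otimes_FF'$. Hence $\mcT\otimes_FF'$ is neutral as a Tannakian category over $F'$.

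For part (a), we know by Proposition \ref{prop:scalextofrigidtensorcats} that $\mcT\otimes_FF'$ is a pre-Tannakian category over $F'$, so it remains to produce a fibre functor of $\mcT\otimes_FF'$ over some field extension of $F'$. By hypothesis $\mcT$ is Tannakian over $F$, so we may fix a field extension $L/F$ together with a fibre functor $\omega:\:\mcT\to\Vect_L$. The $F$-algebra $L\otimes_FF'$ is nonzero, hence possesses a maximal ideal $\frkm$; put $K:=(L\otimes_FF')/\frkm$, a field equipped with compatible embeddings of both $L$ and $F'$ over $F$. Composing $\omega$ with the extension-of-scalars functor $\Vect_L\to\Vect_K$ yields a fibre functor $\omega_K:\:\mcT\to\Vect_K$, which we may regard as a fibre functor on $\mcT$ over the commutative $F'$-algebra $K$. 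Proposition \ref{prop:equivalenceoffibrefunctors} (applied with $R':=K$) then furnishes a fibre functor $\omega_K'$ on $\mcT\otimes_FF'$ over $K$, and since $K/F'$ is a field extension, this shows that $\mcT\otimes_FF'$ is Tannakian over $F'$.

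The only nontrivial ingredient beyond Proposition \ref{prop:equivalenceoffibrefunctors} is the construction of the joint field $K$, and the mild point that an extension-of-scalars along $L\to K$ of a fibre functor remains a fibre functor (exact, faithful, $F$-linear, tensor, and with values in finitely generated projective modules, these properties all being preserved by base change along a flat ring homomorphism between fields). Everything else is purely formal, so I do not expect any real obstacle in carrying this out.
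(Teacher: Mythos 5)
Your proof is correct and follows essentially the same route as the paper: establish the pre-Tannakian structure via Proposition \ref{prop:scalextofrigidtensorcats}, choose a common field extension of $L$ and $F'$ over $F$ (you make the construction of this field explicit via a maximal ideal of $L\otimes_FF'$, where the paper simply ``chooses'' one), and then extend the fibre functor using Proposition \ref{prop:equivalenceoffibrefunctors}. Part (b) is handled in both cases by the observation that one may take the common field to be $F'$ itself.
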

\begin{proof}
(a): By Proposition \ref{prop:scalextofrigidtensorcats} we know that $\mcT\otimes_FF'$
is a pre-Tannakian category over $F'$.
By assumption, there exists a fibre functor
of $\mcT$ over some field extension $L/F$. Choose a field extension
$L'/F$ containing both $F'$ and $L$. Then $(L'\otimes_{L}-)\circ\omega$
is a fibre functor of $\mcT$ over $L'$. By Proposition
\ref {prop:equivalenceoffibrefunctors}, it extends to a fibre
functor of $\mcT\otimes_FF'$ over $L'$.

(b): In this case, we may choose $L'=L=F'$.
\end{proof}

The starting point of Tannakian duality is the
idea that the category of finite-dimensional representations
of a linear algebraic group is in a certain sense
dual to the group itself. Reversing this point of
view, we wish to associate a group to a Tannakian
category.

\begin{dfn}\begin{enumerate}
\item The \emph{algebraic monodromy group} of the Tannakian category $\mcT$ over $F$ with respect
to a given fibre functor $\omega$ over a field extension $F'/F$ is the functor
\[G_\omega(\mcT):\:\llkurv\text{Commutative $F'$-Algebras}\rrkurv\To \llkurv\text{Groups}\rrkurv\]
mapping a commutative $F'$-algebra $R'$ to the group $\Aut^\otimes\big(R'\otimes_{F'}\omega(-)\big)$
of tensor automorphisms of the functor $R'\otimes_{F'}\omega(-)$ which maps 
an object $X$ of $\mcT$ to the $R'$-module $R'\otimes_{F'}\omega(X)$.

\item The \emph{algebraic monodromy group} $G_\omega(X)$ of an object $X$ of $\mcT$ with respect to $\omega$
is the algebraic monodromy group of the strictly full Tannakian subcategory $\llkurv X\rrkurv_{\otimes}$
of $\mcT$ that $X$ generates, with respect to the restriction of $\omega$.
\end{enumerate}\end{dfn}

The theory of Tannakian categories comes in two flavours, neutral and non-neutral.
The former is relatively simple to understand, whereas the latter is more advanced
and more closely connected to groupoids than groups.
The non-neutral theory is developed in \cite{Del90}. Nevertheless, the aim of
this section is to understand part of the non-neutral theory, using only our
results on scalar extension and the neutral theory which we recall
in the next two theorems.

\begin{thm}\label{thm:reconstructgroup}
Let $G$ be an algebraic group over $F$. Then $G$ represents the monodromy group
of $\Rep_F(G)$ with respect to the forgetful functor $\Rep_F(G)\to\Vect_{F}$.
\end{thm}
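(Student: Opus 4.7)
The plan is to construct an explicit natural transformation $\phi\colon G\to G_\omega(\Rep_F(G))$ of functors on commutative $F$-algebras, and then to verify that each component $\phi_R$ is bijective. For the construction, given $g\in G(R)$ and a representation $\rho\colon G\to\GL(V)$ over $F$, the base change produces an element $\rho_R(g)\in\GL_R(R\otimes_FV)$, and I set $\phi(g)_V:=\rho_R(g)$. Naturality in $V$ is immediate from functoriality of base change, while compatibility with the tensor and unit constraints follows from the elementary identities $\rho_{V\otimes W}(g)=\rho_V(g)\otimes\rho_W(g)$ and $\rho_{\bbu}(g)=\id$. Thus $\phi(g)\in\Aut^\otimes(R\otimes_F\omega(-))$, and $\phi_R$ is a natural group homomorphism.

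Injectivity of $\phi_R$ is straightforward: since $G$ is a linear algebraic group, it admits a faithful finite-dimensional representation $(V_0,\rho_0)$ embedding it into $\GL(V_0)$. Any $g\in\ker\phi_R$ then satisfies $\rho_{0,R}(g)=\id$, whence $g$ is the identity of $G(R)$.

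The principal obstacle is surjectivity. Given $\eta\in\Aut^\otimes(R\otimes_F\omega(-))$, I would recover a point $g\in G(R)$ as follows. Write $G=\Spec A$ and view the Hopf algebra $A$ as the filtered union of its finite-dimensional right-translation subcomodules, so that $A$ becomes an ind-object of $\Rep_F(G)$. Extending $\eta$ to the ind-category (as in Subsection \ref{ss:thecategory}) yields an $R$-linear automorphism $\eta_A$ of $R\otimes_FA$. The crux is to check that $\eta_A$ is an $R$-algebra homomorphism: this uses that both the multiplication $m\colon A\otimes A\to A$ and the unit $u\colon F\to A$ are morphisms of $G$-representations, combined with the naturality of $\eta$ and its compatibility with the tensor and unit constraints of $\Rep_F(G)$. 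Composing $\eta_A$ with the counit $A\to F$ (i.e.\ evaluation at the identity) and base-changing to $R$ produces an $F$-algebra homomorphism $A\to R$, that is, a point $g\in G(R)$.

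To finish, I would show $\phi(g)=\eta$ on each finite-dimensional representation $V$ of $G$. Since every such $V$ embeds $G$-equivariantly into a finite direct sum of copies of $A$ (a standard consequence of $A$ being a cogenerator in $\Rep_F(G)$), naturality of both $\phi(g)$ and $\eta$ reduces the claim to agreement on $A$, which is built into the construction of $g$. The main conceptual hurdle is the algebra-homomorphism property of $\eta_A$: it is exactly the step that translates the multiplicative structure of $\mcO(G)$ into the tensor-categorical data of $\Rep_F(G)$, and it is the heart of classical neutral Tannakian reconstruction as carried out in \cite{DeM82}.
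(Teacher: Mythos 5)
The paper itself gives no proof for this theorem, deferring entirely to \cite[Theorem 2.8]{DeM82}; your sketch is a correct outline of the classical neutral Tannakian reconstruction argument proved there. Your treatment of surjectivity --- extending $\eta$ to the regular representation $A=\mcO(G)$ viewed as an ind-object, using that multiplication and unit of $A$ are $G$-equivariant together with the tensor-compatibility of $\eta$ to see that $\eta_A$ is an $R$-algebra automorphism, recovering $g$ via the counit, and reducing the check $\phi(g)=\eta$ to $A$ via the embedding of any $V$ into a finite sum of copies of $A$ --- is exactly the standard route, and the injectivity argument via a faithful finite-dimensional representation is fine under the paper's convention that ``algebraic group'' means affine of finite type.
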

\begin{proof}
\cite[Theorem 2.8]{DeM82}.
\end{proof}

\begin{thm}\label{thm:neutralTannaka}
Let $\mcT$ be a neutral Tannakian category over $F$, and fix a neutral fibre functor $\omega$.
\begin{enumerate}
  \item $G_\omega(\mcT)$ is representable by an affine group scheme over $F$.
  \item $G_\omega(\mcT)$ is of finite type if and only if $\mcT$ is finitely generated.
  \item If $\mcT$ is finitely generated, then $\omega(X)$ is a faithful representation
  of $G_\omega(\mcT)$ for every $X\in\mcT$ with $\mcT=\llkurv X\rrkurv_\otimes$.
  \item $\omega$ induces an equivalence of categories $\mcT\To\Rep_F(G_\omega(\mcT))$.
\end{enumerate}
\end{thm}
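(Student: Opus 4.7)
The plan is to recognise this as the foundational result of neutral Tannakian duality, proved in full detail in \cite[Theorem 2.11]{DeM82}; I sketch the strategy. The key object is the Hopf algebra
\[
A_\omega := \varinjlim_{X \in \mcT}\: \omega(X)^\vee \otimes_F \omega(X),
\]
interpreted as a coend over all objects and morphisms of $\mcT$. Exploiting the rigidity of $\mcT$ and the tensor-functor structure of $\omega$, one equips $A_\omega$ with the structure of a commutative Hopf algebra over $F$: multiplication comes from $\otimes$ on $\mcT$, the unit from $\bbu$, and the antipode from duality. For any commutative $F$-algebra $R$, an $R$-point of $\Spec A_\omega$ is precisely an element of $\Aut^\otimes(R \otimes_F \omega(-))$, so $\Spec A_\omega$ represents $G_\omega(\mcT)$. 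This establishes (a).

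For (b) and (c), suppose $\mcT = \llkurv X \rrkurv_\otimes$. Every object of $\mcT$ is then a subquotient of a finite direct sum of tensor constructions built from $X$ and $X^\vee$, so any tensor automorphism of $\omega$ is determined by its restriction to $\omega(X)$. This yields a closed immersion $G_\omega(\mcT) \hookrightarrow \GL(\omega(X))$, which immediately gives (c) and the forward direction of (b). The converse of (b) follows once (d) is available, because an affine group scheme of finite type over $F$ admits a faithful finite-dimensional representation, whose isomorphism class, via the equivalence of (d), is a generating object of $\mcT$.

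The substantive content is (d). That $\omega$ factors through $\Rep_F(G_\omega(\mcT))$ is tautological from the definition of $G_\omega(\mcT)$, and faithfulness is part of the hypothesis on $\omega$. Fullness amounts to identifying $\Hom_\mcT(X, Y)$ with the $G_\omega(\mcT)$-invariants of $\omega(X^\vee \otimes Y)$, which via rigidity reduces to showing that $G_\omega(\mcT)$-fixed vectors in $\omega(Z)$ arise from morphisms $\bbu \to Z$ in $\mcT$. Essential surjectivity is obtained by realising each finite-dimensional representation of $G_\omega(\mcT)$ as a subquotient of a finite sum of copies of the regular representation on $A_\omega$ (which is, by construction, an ind-object of $\mcT$) and invoking the closure of $\mcT$ under subquotients. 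The main obstacle is the surjectivity step in fullness: passing between tensor-invariant vectors on the $\Vect_F$-side and honest morphisms on the $\mcT$-side is the decisive use of the rigid abelian tensor structure, and is the only point in the argument where the Tannakian hypotheses are not essentially formal.
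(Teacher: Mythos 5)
The paper's ``proof'' of this theorem is a bare citation to \cite{Saa72} and \cite[Theorem 2.11]{DeM82}, treating neutral Tannakian duality as a black box imported from the literature. Your proposal defers to the same source and additionally gives an accurate sketch of the Deligne--Milne argument (coend Hopf algebra, closed immersion into $\GL(\omega(X))$, regular representation for essential surjectivity), so it is correct and takes essentially the same approach as the paper, just with more exposition.
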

\begin{proof}
\cite{Saa72} or \cite[Theorem 2.11]{DeM82}.
\end{proof}

We end this subsection with a version of Theorem \ref{thm:neutralTannaka}
for non-neutral Tannakian categories and a consequence of
Theorem \ref{thm:mainthm2}.

\begin{thm}\label{thm:nonneutralTannaka}
Let $\mcT$ be a Tannakian category over $F$, and fix a fibre functor
$\omega$ over a field extension $F'/F$.
\begin{enumerate}
  \item $G_\omega(\mcT)$ is an affine group scheme over $F'$.
  \item $G_\omega(\mcT)$ is of finite type if and only if $\mcT$ is finitely generated.
  \item If $\mcT$ is finitely generated, then $\omega(X)$ is a faithful representation
  of $G_\omega(\mcT)$ for every $X\in\mcT$ with $\mcT=\llkurv X\rrkurv_\otimes$.
  \item $\omega$ induces an equivalence of categories $\mcT\otimes_FF'\To\Rep_{F'}(G_\omega(\mcT))$.
\end{enumerate}
\end{thm}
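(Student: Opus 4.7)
The plan is to reduce the entire statement to the neutral Tannakian duality already recalled in Theorem \ref{thm:neutralTannaka}, using scalar extension to neutralise $\mcT$. The key identification will be an isomorphism of group-valued functors
\[G_\omega(\mcT)\isom G_{\omega'}(\mcT\otimes_FF'),\]
where $\omega'$ is the essentially unique fibre functor on $\mcT\otimes_FF'$ over $F'$ whose restriction along $t$ is $\omega$, provided by Proposition \ref{prop:equivalenceoffibrefunctors}. By Theorem \ref{thm:scalextoftannakiancats}(b) the category $\mcT\otimes_FF'$ is a neutral Tannakian category over $F'$, and $\omega'$ is a neutral fibre functor on it.

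For the identification of monodromy groups, I would argue as follows. Given a commutative $F'$-algebra $R'$, composition with $R'\otimes_{F'}(-)$ turns $\omega$ into a fibre functor $\omega_{R'}$ over $R'$, and similarly for $\omega'$. By Proposition \ref{prop:equivalenceoffibrefunctors} applied to $R'$, precomposition with $t$ is an equivalence between fibre functors on $\mcT\otimes_FF'$ over $R'$ and fibre functors on $\mcT$ over $R'$; in particular it induces a bijection on automorphism sets. Lemma \ref{lem:aslfdkjsal} ensures that a natural automorphism of $\omega'_{R'}$ is a tensor automorphism if and only if its restriction along $t$ is a tensor automorphism of $\omega_{R'}$. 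These bijections are natural in $R'$, giving the desired isomorphism of functors $G_\omega(\mcT)\isom G_{\omega'}(\mcT\otimes_FF')$.

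Granting this, items (a), (b), (c), (d) all reduce to Theorem \ref{thm:neutralTannaka}. Part (a) is immediate from \ref{thm:neutralTannaka}(a). For (d), Theorem \ref{thm:neutralTannaka}(d) applied to $\omega'$ yields an equivalence $\mcT\otimes_FF'\to\Rep_{F'}(G_{\omega'}(\mcT\otimes_FF'))\isom\Rep_{F'}(G_\omega(\mcT))$, and since $\omega=\omega'\circ t$ this is precisely ``the equivalence induced by $\omega$'' via the universal property of $t$. For (b) and (c), I need the equivalence
\[\mcT=\llkurv X\rrkurv_{\otimes}\quad\Longleftrightarrow\quad\mcT\otimes_FF'=\llkurv t(X)\rrkurv_{\otimes}\]
and, more generally, that $\mcT$ is finitely generated if and only if $\mcT\otimes_FF'$ is. The implication ``$\Rightarrow$'' follows because $t$ is an exact tensor functor, so it carries the building operations (direct sums, tensor products, duals, subquotients, by Proposition \ref{prop:scalextofrigidtensorcats} for duals) into the corresponding operations in $\mcT\otimes_FF'$, and because Lemma \ref{lem:presandcopres1} shows that every $\bX\in\mcT\otimes_FF'$ is a quotient of some $t(Y)$ with $Y\in\mcT$, hence lies in $\llkurv t(X)\rrkurv_{\otimes}$ as soon as $Y\in\llkurv X\rrkurv_{\otimes}$. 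For the converse, if $\mcT\otimes_FF'=\llkurv \bX\rrkurv_{\otimes}$, then by Lemma \ref{lem:presandcopres1} there exist $X_0,X_1\in\mcT$ with a presentation $t(X_1)\to t(X_0)\to\bX\to 0$, so $\bX\in\llkurv t(X_0\oplus X_1)\rrkurv_{\otimes}$ and hence $\mcT\otimes_FF'=\llkurv t(X_0\oplus X_1)\rrkurv_{\otimes}$; then using fullness of the equivalence in (d) restricted to the subcategory generated by $X_0\oplus X_1$ in $\mcT$, one sees that $\llkurv X_0\oplus X_1\rrkurv_{\otimes}=\mcT$. Once (b) is known, (c) follows from Theorem \ref{thm:neutralTannaka}(c) applied to $t(X)$, noting that $\omega(X)=\omega'(t(X))$.

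The main obstacle is the careful verification of the monodromy isomorphism $G_\omega(\mcT)\isom G_{\omega'}(\mcT\otimes_FF')$ as affine group schemes over $F'$, since Proposition \ref{prop:equivalenceoffibrefunctors} is phrased in terms of categories of fibre functors rather than their automorphism groups; one must check that the equivalence there restricts to a bijection on tensor automorphisms, which is where Lemma \ref{lem:aslfdkjsal} enters crucially. A secondary technical point is the equivalence of finite generation under scalar extension for part (b), which I expect to handle as sketched above but which requires a small care about what ``strictly full Tannakian subcategory generated by'' means on both sides.
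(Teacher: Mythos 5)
Your proposal takes essentially the same route as the paper: neutralise $\mcT$ by passing to $\mcT\otimes_FF'$ via Theorem \ref{thm:scalextoftannakiancats}, identify $G_\omega(\mcT)$ with $G_{\omega'}(\mcT\otimes_FF')$ as functors on commutative $F'$-algebras using Proposition \ref{prop:equivalenceoffibrefunctors}, and then invoke the neutral duality Theorem \ref{thm:neutralTannaka}. The appeal to Lemma \ref{lem:aslfdkjsal} for matching \emph{tensor} automorphisms is sound, though it is already absorbed in the full faithfulness claim of Proposition \ref{prop:equivalenceoffibrefunctors}.

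Where you go beyond the paper's terse proof is in parts (b) and (c): the paper simply says ``Theorem \ref{thm:neutralTannaka} applies to $(\mcT\otimes_FF',\omega')$'' without explicitly noting that to translate ``$\mcT\otimes_FF'$ finitely generated'' into ``$\mcT$ finitely generated'' one needs the equivalence $\mcT=\llkurv X\rrkurv_\otimes\Leftrightarrow\mcT\otimes_FF'=\llkurv t(X)\rrkurv_\otimes$. You are right to flag this as a genuine step. Your argument for the forward implication (using Lemma \ref{lem:presandcopres1} and that $t$ preserves the closure operations) is correct. For the converse your sketch (``using fullness of the equivalence in (d) restricted to the subcategory generated by $X_0\oplus X_1$'') gestures at the right idea but is a bit slippery as stated; a cleaner route is to observe that $\mcS\otimes_FF'$, for $\mcS:=\llkurv X_0\rrkurv_\otimes$, sits inside $\mcT\otimes_FF'$ as a strictly full pre-Tannakian subcategory containing $t(X_0)$, hence equals $\mcT\otimes_FF'$; then for any $Y\in\mcT$, $t(Y)=F'\otimes_FY$ lies in $\ind\mcS$, and since $Y$ is a subobject of $F'\otimes_FY$ and $\ind\mcS$ is closed under subobjects, $Y\in\mcS$. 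Either way, the conclusion is correct and the overall proof is sound.
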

\begin{proof}
By Theorem \ref{thm:scalextoftannakiancats}, $\mcT\otimes_FF'$
is a Tannakian category, and the functor $\omega'$ induced by $\omega$
is a neutral fibre functor. Therefore, Theorem \ref{thm:neutralTannaka}
applies to the pair $(\mcT\otimes_FF',\omega')$.

It remains to show that $G_\omega(\mcT)$ and $G_{\omega'}(\mcT\otimes_FF')$ coincide.
But given an $F'$-algebra $R'$, Proposition \ref{prop:equivalenceoffibrefunctors}
shows that the natural homomorphism
\[\xymatrix{
\Aut^\otimes\left((R'\otimes_{F'}-)\circ\omega'\right) \ar[r] & \Aut^\otimes\left((R'\otimes_{F'}-)\circ\omega\right)\\
G_{\omega'}(\mcT\otimes_FF')(R') \ar@{=}[u] & G_\omega(\mcT)(R')\ar@{=}[u]}\]
a bijection, so we are done.
\end{proof}

\begin{prop}\label{cor:whengroupsarethesame}
Let $F'/F$ be a separable field extension, $F''/F'$ any
field extension, $\mcT$ a Tannakian category
over $F$, $\mcT'$ a Tannakian category over $F'$ with
fibre functor $\omega$ over $F''$ and consider an
$F$-linear exact tensor functor $V:\:\mcT\to\mcT'$. Assume that
$V$ is $F'/F$-fully faithful and semisimple on objects.

For every object $X$ of $\mcT$, there exists a canonical isomorphism of algebraic
monodromy groups \[G_{\omega\circ V}(X)\larrover{\isom}G_\omega\big(V(X)\big).\]
\end{prop}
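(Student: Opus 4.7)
The plan is to reduce everything to Theorem \ref{thm:mainthm2} applied to $V_0 := V|_{\llkurv X\rrkurv_\otimes}$. First, since $V$ commutes with tensor products, duals, and direct sums, its essential image on $\llkurv X\rrkurv_\otimes$ generates the same strictly full pre-Tannakian subcategory of $\mcT'$ as $V(X)$ alone does, namely $\llkurv V(X)\rrkurv_\otimes$; moreover $V_0$ inherits from $V$ the properties of being $F$-linear, exact, a tensor functor, $F'/F$-fully faithful, and semisimple on objects, since all of these are formulated pointwise on objects. Theorem \ref{thm:mainthm2} therefore yields an equivalence of pre-Tannakian categories
\[V_0' \colon \llkurv X\rrkurv_\otimes \otimes_F F' \arrover{\isom} \llkurv V(X)\rrkurv_\otimes.\]

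Next, I would transport monodromy groups along this equivalence. Set $\omega_0 := \omega|_{\llkurv V(X)\rrkurv_\otimes}$ and $\omega_0' := \omega_0\circ V_0'$; the latter is a fibre functor on $\llkurv X\rrkurv_\otimes \otimes_F F'$ over $F''$. For each commutative $F''$-algebra $R''$, precomposition with the equivalence $V_0'$ yields a canonical group isomorphism
\[G_\omega(V(X))(R'') = \Aut^\otimes\bigl((R''\otimes_{F''}-)\circ\omega_0\bigr) \arrover{\isom} \Aut^\otimes\bigl((R''\otimes_{F''}-)\circ\omega_0'\bigr),\]
natural in $R''$. Since $R''$ is also a commutative $F'$-algebra, Proposition \ref{prop:equivalenceoffibrefunctors} applied to $\llkurv X\rrkurv_\otimes$ identifies the right-hand side with $\Aut^\otimes\bigl((R''\otimes_{F''}-)\circ(\omega_0'\circ t)\bigr)$, and Theorem \ref{thm:univpropscalex}(a) provides a canonical isomorphism $\omega_0'\circ t \cong \omega_0\circ V_0 = (\omega\circ V)|_{\llkurv X\rrkurv_\otimes}$, whose automorphism group over $(R''\otimes_{F''}-)$ is $G_{\omega\circ V}(X)(R'')$ by definition. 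Composing this chain of canonical and $R''$-natural bijections yields the desired isomorphism of affine group schemes over $F''$.

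The only points requiring care are that the restriction of $V$ to $\llkurv X\rrkurv_\otimes$ really satisfies the hypotheses of Theorem \ref{thm:mainthm2}, and that the equivalence of fibre-functor categories provided by Proposition \ref{prop:equivalenceoffibrefunctors} induces a bijection on automorphism groups that is natural in $R''$; the first is immediate from pointwise formulation, the second from the fact that a fully faithful functor induces isomorphisms on automorphism groups of objects. Thus the whole argument is absorbed into Theorem \ref{thm:mainthm2}, and no further substantial obstacle arises.
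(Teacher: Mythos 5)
Your proof is correct and follows essentially the same route as the paper's: both hinge on applying Theorem \ref{thm:mainthm2} to get the equivalence $\llkurv X\rrkurv_\otimes\otimes_FF'\isoto\llkurv V(X)\rrkurv_\otimes$, and then identifying the two monodromy groups across $t$. Where the paper compresses the transfer step into citations of Theorem \ref{thm:nonneutralTannaka}(d) and Theorem \ref{thm:reconstructgroup} and then concludes with ``clearly this implies,'' you instead invoke Proposition \ref{prop:equivalenceoffibrefunctors} and Theorem \ref{thm:univpropscalex}(a) directly and spell out the resulting chain of $R''$-natural bijections on $\Aut^\otimes$ explicitly; this is the same mechanism unpacked (the paper's Theorem \ref{thm:nonneutralTannaka}(d) is itself proved via Proposition \ref{prop:equivalenceoffibrefunctors}), and your version is somewhat more self-contained and careful about naturality in $R''$.
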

\begin{proof}
By Theorems \ref{thm:nonneutralTannaka}(d) and \ref{thm:reconstructgroup},
the monodromy group $G_{\omega\circ V}(X)$ coincides with the
monodromy group of $t(X)$ as calculated in $\mcT\otimes_FF'$ with respect
to $(\omega\circ V)'$:
\[G_{\omega\circ V}(X)\larrover{\isom}G_{(\omega\circ V)'}(t(X)).\]
Applying Theorem \ref{thm:mainthm2} to the Tannakian categories
$\llkurv X\rrkurv_\otimes$ and $\llkurv V(X)\rrkurv_\otimes$, we obtain
an equivalence of categories
\[\llkurv X\rrkurv_\otimes\otimes_FF'\arrover{\isom}\llkurv V(X)\rrkurv_\otimes.\]
Clearly, this implies the existence of an isomorphism
\[G_{(\omega\circ V)'}(t(X))\larrover{\isom}G_\omega(V(X)),\]
which is what was left to prove.
\end{proof}

\subsection{Reductivity of Monodromy Groups}

Let $F$ be a field.

\begin{prop}\label{thm:redcrit}
Let $V$ be a finite-dimensional $F$-vector space, and consider a closed algebraic subgroup $G\subset\GL(V)$.
If $V$ is semisimple as a representation of $G$, and $\End_G(V)$ is a separable $F$-algebra, then the identity
component $G^\circ$ is a reductive group.
\end{prop}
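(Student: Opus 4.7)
The plan is to reduce reductivity of $G^\circ$ to a faithful semisimple representation of $G_{\overline{F}}$ and invoke a classical Mostow-type criterion.

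First, I would note that reductivity of $G^\circ$ may be checked after base change to an algebraic closure $\overline{F}$, since by definition $G^\circ$ is reductive if and only if the unipotent radical $R_u(G_{\overline{F}}^\circ)$ is trivial. Hence it suffices to prove that $G_{\overline{F}}^\circ$ is reductive. The representation $V_{\overline{F}}$ of $G_{\overline{F}}$ is clearly still faithful, so the main task becomes showing that $V_{\overline{F}}$ is semisimple as a $G_{\overline{F}}$-representation.

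Next, I would establish semisimplicity of $V_{\overline{F}}$ by a double centralizer argument combined with separability. Set $B := \End_G(V)$ and let $A \subset \End_F(V)$ denote the image of the group algebra $F[G]$, so that $V$ is a faithful semisimple $A$-module with $\End_A(V) = B$. By the Jacobson double centralizer theorem, $A = \End_B(V)$. Base-changing to $\overline{F}$ and using that $V$ is finite-dimensional gives the natural identification
\[A_{\overline{F}} \;=\; \End_{B_{\overline{F}}}(V_{\overline{F}}).\]
By the separability assumption on $B$ and Proposition \ref{prop:separablealgebras}(b), $B_{\overline{F}}$ is a semisimple $\overline{F}$-algebra. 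Therefore the $B_{\overline{F}}$-module $V_{\overline{F}}$ is semisimple, which forces its endomorphism algebra $A_{\overline{F}}$ to be semisimple as well. Consequently $V_{\overline{F}}$ is a semisimple $A_{\overline{F}}$-module. Since $A_{\overline{F}}$ is precisely the image of $\overline{F}[G_{\overline{F}}]$ in $\End_{\overline{F}}(V_{\overline{F}})$, the representation $V_{\overline{F}}$ is semisimple as a $G_{\overline{F}}$-module.

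Finally, I would conclude by the standard Mostow argument. The unipotent radical $U := R_u(G_{\overline{F}}^\circ)$ is a normal subgroup of $G_{\overline{F}}$. By Clifford's theorem, the restriction of the semisimple $G_{\overline{F}}$-representation $V_{\overline{F}}$ to the normal subgroup $U$ is semisimple. But every semisimple representation of a unipotent algebraic group is trivial, so $U$ acts trivially on $V_{\overline{F}}$. Faithfulness of $V_{\overline{F}}$ then forces $U = 1$, so $G_{\overline{F}}^\circ$ is reductive and hence so is $G^\circ$.

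The main obstacle I anticipate is bookkeeping around the double centralizer step: one must be careful to identify the image of $\overline{F}[G_{\overline{F}}]$ with $A_{\overline{F}}$ and to verify that the formation of $\End_B(V)$ commutes with base change, both of which rely on finite-dimensionality. The final Mostow-type step invoking Clifford's theorem and triviality of semisimple representations of unipotent groups is standard and could be referenced directly.
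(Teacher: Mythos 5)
Your proposal is correct and the final Mostow-type step (Clifford's theorem, triviality of semisimple unipotent representations, faithfulness) is essentially the argument the paper gives, phrased in slightly different language: the paper writes out $V^U$ explicitly, observes it is $G$-stable because $U$ is normal, takes a semisimple complement $V'$, and derives a contradiction from $(V')^U\neq 0$ — this is precisely your appeal to Clifford.

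The genuine difference is in the reduction to $\overline{F}$. The paper establishes semisimplicity of $\overline{F}\otimes_F V$ as a $G_{\overline{F}}$-representation by invoking its own scalar-extension machinery: Proposition~\ref{prop:semisimpllll} (scalar extension of a semisimple object with separable endomorphism algebra stays semisimple) applied via Example~\ref{ex:scalarextcats}(b) (the identification $\Rep_F(G)\otimes_F F' \simeq \Rep_{F'}(G_{F'})$). You instead give a direct, self-contained double-centralizer argument: set $B=\End_G(V)$, let $A\subset\End_F(V)$ be the algebra whose submodules of $V$ are exactly the $G$-subrepresentations, apply Jacobson density to get $A=\End_B(V)$, base-change (using flatness and finite dimensionality) to get $A_{\overline{F}}=\End_{B_{\overline{F}}}(V_{\overline{F}})$, and deduce semisimplicity of $A_{\overline{F}}$ from that of $B_{\overline{F}}$. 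This is a perfectly good classical route; what it buys is independence from the paper's categorical apparatus. What you lose — and you correctly flagged this as the delicate point — is that for an affine algebraic group scheme $G$ the phrase ``image of the group algebra $F[G]$'' needs unwinding: the right object is the image in $\End_F(V)$ of the dual of a finite-dimensional subcoalgebra $C\subset\mcO(G)$ through which the comodule map factors, and one must check that this construction commutes with base change (it does, by flatness). The paper's citation of Example~\ref{ex:scalarextcats}(b), which in turn cites Deligne, packages exactly this compatibility, so within the paper's framework the abstract route is shorter and does not require re-deriving these facts.
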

\begin{proof}
Let $\overline{F}$ be an algebraic closure of $F$.
Since $\End_G(V)$ is both semisimple and separable over $F$, the
$\Falg$-algebra $\Falg\otimes_FE$ is semisimple by Proposition \ref{prop:separablealgebras}(b).
By the same assumptions, $\Falg\otimes_FV$ is a semisimple
representation of $G_{\overline{F}}$, the base change of $G$ to $\overline{F}$,
using Proposition \ref{prop:semisimpllll} applied to Example \ref{ex:scalarextcats}(b).
Therefore we may assume that $F$ is algebraically closed.

Let $U$ be the unipotent radical of $G$, and let $V^U\subset V$ denote
the sub-vector space consisting of those elements fixed (pointwise) by $U$.
Since $U$ is normal in $G$, $V^U$ is a $G$-stable subspace of $V$.
We claim that $V^U=V$. If not, since $V$ is semisimple, we may write $V=V^U\oplus V'$ for some
$G$-stable complement $V'$ of $V^U$. Since $U$ operates unipotently
on $V'$, it follows that $(V')^U\neq 0$, which is a contradiction
to the definition of $V'$ as a complement of $V^U$. Therefore
$V^U=V$. Since $G$ operates faithfully on $V$, it follows that $U=1$,
which means that $G^\circ$ is reductive.
\end{proof}

\begin{cor}
Let $\mcT$ be a Tannakian category over $F$, fix a fibre functor $\omega$ over some
field extension $F'/F$, and choose an object $X$ of $\mcT$. If $X$ is
semisimple and $\End(X)$ is a separable $F$-algebra, then the
identity component of $G_\omega(X)$ is a reductive group over $F'$.
\end{cor}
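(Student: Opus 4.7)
The plan is to apply Proposition \ref{thm:redcrit} to the $F'$-vector space $V := \omega(X)$ with the closed algebraic subgroup $G := G_\omega(X) \subset \GL(V)$. First I would pass to the finitely generated strictly full Tannakian subcategory $\mcS := \llkurv X\rrkurv_{\otimes}$ of $\mcT$, so that by Theorem \ref{thm:nonneutralTannaka}(b,c) the monodromy group $G$ is an affine $F'$-group scheme of finite type and $V$ is a faithful $G$-representation, making $G \subset \GL(V)$ a closed subgroup. Theorem \ref{thm:nonneutralTannaka}(d) then provides an equivalence $\mcS \otimes_F F' \isom \Rep_{F'}(G)$ under which $t(X)$ corresponds to $V$.

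Next I would verify the two hypotheses of Proposition \ref{thm:redcrit} by transporting the assumptions on $X$ through $t$. Semisimplicity of $V$ as a $G$-representation is equivalent to semisimplicity of $t(X)$ in $\mcS\otimes_FF'$, which is exactly the content of Proposition \ref{prop:semisimpllll} applied to $X$ and $\End_\mcS(X) = \End_\mcT(X)$ (equal because $\mcS\subset\mcT$ is full). For the endomorphism algebra, Proposition \ref{lem:tisFpFff} together with the equivalence of Theorem \ref{thm:nonneutralTannaka}(d) gives
\[
\End_G(V) \;\isom\; \End_{\mcS\otimes_FF'}\!\bigl(t(X)\bigr) \;\isom\; F' \otimes_F \End_\mcT(X).
\]

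The remaining point, and the one place where the excerpt does not directly supply what is needed, is the base-change stability of separability: if $E := \End(X)$ is separable over $F$, then $F' \otimes_F E$ must be separable over $F'$. I would verify this from Definition \ref{dfn:sepalgebra}: writing $E = \prod E_i$ with $E_i$ simple of center $Z_i/F$ separable, one has $F'\otimes_F Z_i \isom \prod_j K_{ij}$ with each $K_{ij}/F'$ a finite separable field extension (separability of field extensions is preserved under arbitrary base change), and this decomposition refines $F' \otimes_F E_i$ into simple factors whose centers are exactly the $K_{ij}$, hence separable over $F'$. Granting this, Proposition \ref{thm:redcrit} applies to $V$ and $G$ and yields that $G^\circ$ is reductive. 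The main obstacle is thus this base-change statement for separable algebras; the rest is essentially a bookkeeping exercise chaining together the results of Sections 1--3.
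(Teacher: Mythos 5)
Your proposal is correct and follows the same route as the paper: the paper's proof is a one-liner that cites Theorem \ref{thm:nonneutralTannaka}(c) for faithfulness and then declares that Proposition \ref{thm:redcrit} applies, leaving the remaining hypothesis checks (semisimplicity of $\omega(X)$ via Proposition \ref{prop:semisimpllll} and the equivalence of Theorem \ref{thm:nonneutralTannaka}(d), and separability of $\End_{G_\omega(X)}(\omega(X))\cong F'\otimes_F\End(X)$ over $F'$) implicit. The base-change stability of separability that you single out is indeed elided in the paper, and your argument for it (factor each separable center $Z_i$ over $F'$ into separable fields $K_{ij}$, identify these as the centers of the simple factors of $F'\otimes_FE_i$) is the right way to fill that in.
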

\begin{proof}
The vector space $\omega(X)$ is a faithful representation of
$G_\omega(X)$ by Proposition \ref{thm:nonneutralTannaka}(c).
Therefore, Proposition \ref{thm:redcrit} applies to it, and we are done.
\end{proof}

\subsection{An Application: Representation-Valued Fibre Functors}

We close this article with an application of
our results to ``representation-valued'' fibre functors.
Let $\Gamma$ be a profinite group. 
Let $F$ be a global field, $F'\supset F$ a local field
arising by completing $F$ at some place. It is well-known
that the field extension $F'/F$ is separable.
Let $\mcT$ be any Tannakian category over $F$,
and let $\Rep_{F'}\Gamma$ denote the category of
finite-dimensional continuous representations of $\Gamma$ over $F'$.

We assume that we are given a faithful exact $F$-linear tensor functor
\[V:\:\mcT\To\Rep_{F'}\Gamma,\]
a ``representation-valued fibre functor'', which is \emph{additionally}
both $F'/F$-fully faithful and semisimple on objects. Examples
are given by the rational Tate module functors on either the Tannakian
category of pure Grothendieck motives generated by abelian varieties up to isogeny
or the Tannakian category of Anderson $A$-motives up to isogeny.

For every object $X$ of $\mcT$, let $\Gamma(X)$ denote the image of $\Gamma$
in $\Aut_{F'}(V(X))$, and let $G(X)$ denote the algebraic monodromy group of $X$
with respect to the fibre functor on $\mcT$ arising by postcomposing $V$ with the forgetful functor
$U:\:\Rep_{F'}\Gamma\to\Vect_{F'}$.

There exists a unique reduced algebraic subgroup of $\GL(V(X))$ which has as set of $F'$-rational
points the Zariski closure of $\Gamma$ in $\GL(V(X))(F')$, and it is natural to hope that this
group coincides with $G(X)$:

\begin{thm}\label{thm:zdenseredabstr}
\begin{enumerate}
  \item The natural homomorphism $\Gamma(X)\to G(X)(F')$ is injective
  and has Zariski-dense image.
  \item If $X$ is semisimple and $\End_{\mcT}(X)$ is a separable $F$-algebra, then $G(X)^\circ$, the identity
    component of $G(X)$, is a reductive group.
\end{enumerate}
\end{thm}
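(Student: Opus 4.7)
The plan is to reduce both parts of the theorem to results already established in the paper, via the canonical identification of monodromy groups provided by Proposition~\ref{cor:whengroupsarethesame}.

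First, observe that the hypotheses of Proposition~\ref{cor:whengroupsarethesame} are all in force: $F'/F$ is separable (being the completion of a global field at a place), and $V:\mcT\to\Rep_{F'}\Gamma$ is assumed $F'/F$-fully faithful and semisimple on objects. Applied with $\mcT':=\Rep_{F'}\Gamma$ and $\omega:=U:\Rep_{F'}\Gamma\to\Vect_{F'}$ the forgetful functor, that proposition yields a canonical isomorphism
\[G(X)=G_{U\circ V}(X)\;\isom\;G_{U}\!\big(V(X)\big)\]
of affine group schemes over $F'$. Under this identification, the faithful representation of $G(X)$ on $\omega(X)=V(X)$ (provided by Theorem~\ref{thm:nonneutralTannaka}(c)) is matched with the canonical action of $G_U(V(X))\subset\GL(V(X))$ on $V(X)$.

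For part~(a), I would next invoke Proposition~\ref{prop:mondrofcontreps} (stated earlier in the paper) to the effect that the algebraic monodromy group of the strictly full Tannakian subcategory $\llkurv V(X)\rrkurv_\otimes$ of $\Rep_{F'}\Gamma$ with respect to $U$ is precisely the reduced Zariski closure of the image of $\Gamma$ inside $\GL(V(X))(F')$, i.e.\ the Zariski closure of $\Gamma(X)$. Combining this with the identification above, the composite map
\[\Gamma\To G(X)(F')\To\GL(V(X))(F')\]
agrees by construction with the defining map $\Gamma\to\GL(V(X))(F')$, whose image is $\Gamma(X)$; since $G(X)\to\GL(V(X))$ is a closed immersion (by faithfulness of $V(X)$), the first arrow factors through an injection $\Gamma(X)\into G(X)(F')$ whose image is Zariski-dense in $G(X)$. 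This proves~(a).

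For part~(b), I would simply apply the corollary to Proposition~\ref{thm:redcrit} (stated just above the theorem) to the Tannakian category $\mcT$, the fibre functor $\omega:=U\circ V$ over $F'$, and the object $X$: by assumption $X$ is semisimple and $\End_{\mcT}(X)$ is a separable $F$-algebra, so that corollary delivers directly that $G_\omega(X)^\circ=G(X)^\circ$ is reductive over $F'$. No real obstacle is expected here — the only subtlety is checking that the natural map $\Gamma(X)\to G(X)(F')$ in~(a) really coincides with the one coming from the action of $\Gamma$ by tensor automorphisms on $\omega=U\circ V$, which is immediate from unwinding the definition of $G_\omega(\mcT)$ as a functor of tensor automorphisms.
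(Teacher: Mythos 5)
Your proposal is correct and follows essentially the same route as the paper. For part (a) you invoke Proposition~\ref{cor:whengroupsarethesame} to identify $G(X)=G_{U\circ V}(X)$ with $G_U(V(X))$ and then Proposition~\ref{prop:mondrofcontreps} to identify the latter with the Zariski closure of $\Gamma(X)$, exactly as the paper does; your added remark about injectivity of $\Gamma(X)\to G(X)(F')$ (via the closed immersion $G(X)\into\GL(V(X))$) makes explicit something the paper leaves tacit. For part (b) you simply cite the Corollary to Proposition~\ref{thm:redcrit}, whereas the paper re-verifies the hypotheses of Proposition~\ref{thm:redcrit} directly (closedness of $G(X)\subset\GL(V(X))$, semisimplicity of $V(X)$ from ``semisimple on objects'', and separability of $\End(V(X))=F'\otimes_F\End(X)$); the two versions of (b) are interchangeable since the corollary encapsulates precisely those checks.
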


We need some preparations.

\begin{lem}\label{lem:ffandsimplicityofalg}
Let $V$ be a finite-dimensional $F'$-vector space, and consider an algebraic subgroup
$G\subset\GL(V)$ together with a Zariski-dense subgroup $\Gamma\subset G(F')$ of its
$F'$-rational points. Then:
\begin{enumerate}
  \item A linear subspace $W\subset V$ is $G$-stable if and only if it is $\Gamma$-stable.
  \item We have $\End_G(V)=\End_{\Gamma}(V)$.
\end{enumerate}
\end{lem}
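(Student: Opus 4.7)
The unifying idea is that both ``stabilizing a subspace'' and ``commuting with a given endomorphism'' are Zariski-closed conditions cutting out closed subgroup schemes of $\GL(V)$, so Zariski density of $\Gamma$ in $G$ will automatically upgrade such a property from $\Gamma$ to all of $G$.

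For part (a), the direction ``$G$-stable $\Rightarrow$ $\Gamma$-stable'' is immediate since $\Gamma\subset G(F')$. For the converse, fix a linear subspace $W\subset V$ and consider the stabilizer
\[\Stab_{\GL(V)}(W):=\{g\in\GL(V):gW\subset W\},\]
which is a closed subgroup scheme of $\GL(V)$ cut out by linear (hence polynomial) equations in the matrix coefficients of $g$. Its intersection $H:=\Stab_{\GL(V)}(W)\cap G$ is a closed subgroup scheme of $G$. If $W$ is $\Gamma$-stable then $\Gamma\subset H(F')$, so $H(F')$ is a Zariski-closed subset of $G(F')$ containing the Zariski-dense subgroup $\Gamma$. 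Hence $H(F')=G(F')$, and since (after replacing $G$ by its reduced subscheme if necessary) a reduced closed subscheme of $G$ is determined by its $F'$-points, we conclude $H=G$, i.e.\ $G$ stabilizes $W$.

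For part (b), the inclusion $\End_G(V)\subset\End_\Gamma(V)$ is again trivial. For the reverse inclusion, pick any $f\in\End_\Gamma(V)$ and consider the centralizer
\[C_f:=\{g\in\GL(V):gf=fg\},\]
another closed subgroup scheme of $\GL(V)$. Its intersection with $G$ is a closed subgroup scheme of $G$ whose $F'$-points contain $\Gamma$, so by Zariski density it must coincide with $G$. Hence $f$ commutes with every element of $G$, i.e.\ $f\in\End_G(V)$.

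The only subtle point is the passage from ``$H(F')=G(F')$'' to ``$H=G$''. If $G$ is not reduced one should replace it by its reduced subgroup scheme $G_{\mathrm{red}}$, which in characteristic zero equals $G$, and in general contains all $F'$-rational points; the stabilizer and centralizer constructions above commute with passing to $G_{\mathrm{red}}$, so the argument goes through. I expect this set-theoretic versus scheme-theoretic bookkeeping to be the only mildly delicate step — everything else is the standard ``Zariski-dense subgroup sees the same closed conditions as the ambient group'' principle.
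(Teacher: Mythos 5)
Your proof is correct and uses essentially the same density principle as the paper for both parts. The only divergence is in part (b): the paper identifies $\End_G(V)$ with the $G$-invariants of $V^\vee\otimes V$ and reduces to part (a) applied to that representation, whereas you apply the density argument directly to the centralizer $C_f\cap G$ for each $f\in\End_\Gamma(V)$. These are trivially equivalent; if anything your direct route is a bit more transparent, since the paper's phrase ``by a similar argument as in (a)'' glosses over exactly the centralizer-type step you make explicit. Your caution about passing from $H(F')=G(F')$ to $H=G$ is sensible; note that under the hypothesis as used in the paper (where $G$ is taken to be the Zariski closure of $\Gamma$, hence reduced and with $\Gamma$ dense in $G$ as a scheme), the issue does not arise.
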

\begin{proof}
(a): Given a linear subspace $W\subset V$ the stabiliser $H:=\Stab_G(W)$ is
an algebraic subgroup of $G$. If $W$ is $G$-stable, then the $F'$-valued points of $H=G$
contain $\Gamma$, so $W$ is $\Gamma$-stable.

Conversely, if $W$ is $\Gamma$-stable, then $H(F')$ contains $\Gamma$. Since
$\Gamma$ is dense in $G(F')$, this implies that $H=G$, and so $W$ is $G$-stable.

(b): We note that $\End_G(V)$ is the maximal $G$-stable subspace of $V^\vee\otimes V$
on which $G$ acts trivially, and similarly $\End_\Gamma(V)$ is the maximal $\Gamma$-stable subspace
on which $\Gamma$ acts trivially. By a similar argument as in (a), these two spaces must coincide.
\end{proof}

\begin{prop}\label{prop:mondrofcontreps}
Let $V$ be a finite-dimensional $F'$-vector space, consider a subgroup $\Gamma\subset\GL(V)(F')$
with associated algebraic group $G:=\overline{\Gamma}^{Zar}\subset\GL(V)$. Let $V^\mathrm{cont}$ represent
$V$ considered as a continuous representation of $\Gamma$ over $F'$, and let $V^\mathrm{alg}$ represent
$V$ considered as a representation of $G$ over $F'$.
\begin{enumerate}
  \item The natural functor
          \[\llkurv V^\mathrm{alg}\rrkurv_\otimes\To\llkurv V^\mathrm{cont}\rrkurv_\otimes\]
        between the strictly full Tannakian subcategories of $\Rep_{F'}G$ and of $\Rep_{F'}\Gamma$
        generated by $V^\mathrm{alg}$ and $V^\mathrm{cont}$, respectively, is an equivalence of Tannakian categories.
  \item In particular, $G$ is the the algebraic monodromy group of $V^\mathrm{cont}$.
\end{enumerate}
\end{prop}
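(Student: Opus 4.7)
I would construct the natural functor as the restriction of $G$-representations to $\Gamma$-representations along the inclusion $\Gamma \subset G(F')$. This is an $F'$-linear exact tensor functor $\Rep_{F'}G \to \Rep_{F'}\Gamma$ sending $V^{\mathrm{alg}}$ to $V^{\mathrm{cont}}$. Its target really lies in \emph{continuous} $\Gamma$-representations because, for any $W \in \Rep_{F'}G$, the $\Gamma$-action on $W$ factors as $\Gamma \to \GL(V)(F') \to \GL(W)(F')$, the first arrow being continuous by continuity of $V^{\mathrm{cont}}$ and the second being algebraic, hence continuous. Since $V^{\mathrm{alg}}$ is a faithful representation of $G$, the standard consequence of Tannakian duality (Theorems \ref{thm:reconstructgroup} and \ref{thm:neutralTannaka}) that a faithful representation generates the full category of representations gives $\llkurv V^{\mathrm{alg}}\rrkurv_{\otimes} = \Rep_{F'}G$, so the restriction functor is precisely the desired functor $R:\:\llkurv V^{\mathrm{alg}}\rrkurv_{\otimes} \to \llkurv V^{\mathrm{cont}}\rrkurv_{\otimes}$.

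To prove (a) I would show that $R$ is fully faithful and essentially surjective. For full faithfulness, for $W_1, W_2$ in $\llkurv V^{\mathrm{alg}}\rrkurv_{\otimes}$ one has $\Hom_G(W_1, W_2) = (W_1^\vee \otimes W_2)^G$ and $\Hom_\Gamma(W_1, W_2) = (W_1^\vee \otimes W_2)^\Gamma$, and these coincide by Lemma \ref{lem:ffandsimplicityofalg}(b) applied to $W_1^\vee \otimes W_2$, after one observes that the image of $\Gamma$ in $\GL(W_1^\vee \otimes W_2)(F')$ remains Zariski-dense in the algebraic image of $G$ there. For essential surjectivity, the essential image of $R$ contains $V^{\mathrm{cont}}$ and is closed under tensor products, duals, and finite direct sums, since $R$ is a tensor functor. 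By Lemma \ref{lem:ffandsimplicityofalg}(a) applied to each object $R(W)$, every $\Gamma$-stable subspace of $R(W)$ is automatically $G$-stable, so the essential image is also closed under subquotients in $\Rep_{F'}\Gamma$. It is therefore a strictly full Tannakian subcategory of $\Rep_{F'}\Gamma$ containing $V^{\mathrm{cont}}$, and hence equals $\llkurv V^{\mathrm{cont}}\rrkurv_{\otimes}$.

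For (b), I would combine (a) with Theorem \ref{thm:reconstructgroup}: the algebraic monodromy group of $\Rep_{F'}G$ with respect to the forgetful fibre functor is $G$, and the equivalence $R$ of (a) intertwines the two forgetful fibre functors, so the monodromy group of $\llkurv V^{\mathrm{cont}}\rrkurv_{\otimes}$ is also $G$. The main technical obstacle is the bookkeeping around Lemma \ref{lem:ffandsimplicityofalg}, which is stated only for the distinguished representation $V$: I need both (a) and (b) of it for arbitrary objects of $\llkurv V^{\mathrm{alg}}\rrkurv_{\otimes}$, but this extension is routine because Zariski-density of $\Gamma$ in $G(F')$ propagates through any algebraic homomorphism $G \to \GL(W)$, as the algebraic image of $G$ is already the Zariski closure of the image of $\Gamma$.
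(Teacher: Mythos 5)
Your proposal is correct, and it reaches the conclusion by a slightly more direct route than the paper. Where you verify full faithfulness of the restriction functor, your argument via Lemma~\ref{lem:ffandsimplicityofalg}(b) is exactly the paper's. The difference lies in essential surjectivity: the paper checks that the functor is \emph{semisimple on objects} (simple objects of $\Rep_{F'}G$ restrict to simple $\Gamma$-modules, which follows from Lemma~\ref{lem:ffandsimplicityofalg}(a)), and then invokes Theorem~\ref{thm:mainthm2} in the degenerate case $F=F'$; that theorem in turn routes through Proposition~\ref{prop:ffssisessimsubquotclosed} to deduce that the essential image is closed under subquotients. You cut out the middleman by applying Lemma~\ref{lem:ffandsimplicityofalg}(a) directly: any $\Gamma$-stable subspace of $R(W)$ is $G$-stable, so the essential image is closed under subquotients, hence is a strictly full Tannakian subcategory containing $V^{\mathrm{cont}}$. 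Since the field extension here is trivial, the paper's appeal to the scalar-extension machinery of Theorem~\ref{thm:mainthm2} is somewhat heavier than necessary, and your argument is self-contained; the paper presumably chooses that route to illustrate the general theorem. Your handling of the bookkeeping point (propagating Zariski-density of $\Gamma$ through the algebraic maps $G\to\GL(W)$) is also sound, and is in fact spelled out more carefully than the paper's terse ``by continuity.'' For part (b), both you and the paper use $\llkurv V^{\mathrm{alg}}\rrkurv_{\otimes}\simeq\Rep_{F'}G$ (a faithful representation tensor-generates, as in Waterhouse) together with Theorem~\ref{thm:reconstructgroup} and the compatibility of the equivalence with the forgetful fibre functors; these are the same argument.
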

\begin{proof}
(a): Any object of $\llkurv V^\mathrm{alg}\rrkurv_\otimes$ yields
a continuous representation of $\Gamma$, and this gives rise to the desired exact $F'$-linear
tensor functor; let us denote it by $C$. We wish to employ Theorem \ref{thm:mainthm2} to conclude that $C$
is an equivalence of Tannakian categories, so we must show that $C$ is fully faithful and semisimple,
let us do this.

Consider $W\in\llkurv V^\mathrm{alg}\rrkurv_\otimes$, let $G_W$ denote the image of $G$ in $\GL(W)$ and
let $\Gamma_W$ denote the image of $\Gamma$ in $G_W(F')$. By continuity, $\Gamma_W$ is dense in
$G_W(F')$, so Lemma \ref{lem:ffandsimplicityofalg}(b) shows that $\End_G(W)=\End_\Gamma(CW)$.
Since this is true for all $W$, we conclude that $C$ is fully faithful. If $W$ is simple,
Lemma \ref{lem:ffandsimplicityofalg}(a) shows that $CW$ is simple. In
particular, $C$ is semisimple on objects.

(b): It is well-known (cf. \cite[Theorem 3.5]{WCW}) that $\llkurv V^\mathrm{alg}\rrkurv_\otimes$ is equivalent
to $\Rep_{F'}(G)$. Thus, by Theorem \ref{thm:reconstructgroup},
$G$ is the algebraic monodromy group of $V^\mathrm{alg}$, and so by (a) $G$ is also
the algebraic monodromy group of $V^\mathrm{cont}$.
\end{proof}

\begin{proof}[Proof of Theorem \ref{thm:zdenseredabstr}]
(a): By Corollary \ref{cor:whengroupsarethesame} we have
\[G_U(V(X))\isom G_{U\circ V}(X).\]
By Proposition \ref{prop:mondrofcontreps}, $\Gamma(X)\subset G_{U\circ V}(X)(F')$
is Zariski dense.

(b): By our assumptions or Theorem \ref{thm:nonneutralTannaka}(c), $G(X)$ is a closed algebraic subgroup of $\GL(V(X))$, and $V(X)$
is semisimple as a representation of $G(X)$, since $X$ is semisimple
and $V$ is semisimple on objects. Since
$V$ is $F'/F$-fully faithful, $\End(V(X))=F'\otimes_F\End(X)$, which is a separable $F'$-algebra
since $\End(X)$ is a separable $F$-algebra. Therefore, the assumptions of Theorem \ref{thm:redcrit}
hold true, and $G(X)^\circ$ is a reductive group.
\end{proof}

\bibliographystyle{alpha}

\end{document}